\documentclass[a4paper,11pt,fullpage]{article}
\usepackage[english]{babel}

\usepackage[left=1in,right=1in,top=1in,bottom=1in]{geometry}

\usepackage{amsmath}
\usepackage{amsthm}
\usepackage{amsfonts}
\usepackage{amssymb}
\usepackage{amsxtra}
\usepackage{latexsym}
\usepackage{ifthen}
\usepackage{cite}
\usepackage{esint}
\usepackage[cp1250]{inputenc}

\usepackage{epic}
\usepackage{eepic}
\usepackage{stmaryrd}

\DeclareMathOperator*{\osc}{osc}

\numberwithin{equation}{section}
\newtheorem{theorem}{Theorem}[section]
\newtheorem{lemma}{Lemma}[section]
\newtheorem{remark}{Remark}[section]

\newtheorem{propo}{Proposition}[section]

\def\XXint#1#2#3{{\setbox0=\hbox{$#1{#2#3}{\int}$}
     \vcenter{\hbox{$#2#3$}}\kern-.5\wd0}}

\begin{document}

\title{On the  H\"{o}lder continuity of signed solutions to doubly nonlinear  parabolic equations in the mixed degenerate/singular  cases}

\author{ Igor I. Skrypnik
 }
\maketitle

  \begin{abstract}

We prove the H\"{o}lder continuity of sign-changing solutions to the equation of the type
$$\frac{\partial}{\partial t}\big(|u|^{q-1} u\big)- div\Big(|D u|^{p-2}\,D u\Big)=0,$$ 
where  numbers $p$, $q$ satisfy  the conditions
$$0<q<p-1\quad 
\text{and}\quad p<2,$$
or
$$q>p-1\quad\text{and}\quad p>2.$$ 
Our proof uses new versions of the integral Harnack type inequalities for sign-changing solutions.

\textbf{Keywords:}
interior H\"{o}lder continuity, doubly nonlinear parabolic equations

\textbf{MSC (2010)}: 35B40, 35B45, 35B65, 35K65, 35K67

\end{abstract}

  \pagestyle{myheadings} \thispagestyle{plain}
\markboth{I. Skrypnik}
{On the H\"{o}lder continuity...}

\section{Introduction and main results}\label{Introduction}
 In this paper we are concerned with doubly nonlinear parabolic equations
 with measurable coefficients
 \begin{equation}\label{eq1.1}
 \frac{\partial}{\partial t}\big(|u|^{q-1} u\big)- div \mathbf{A}(x, t, u, D u)=0,\quad (x, t)\in \Omega_T:=\Omega\times (0, T),\quad \Omega\subset \mathbb{R}^N.
 \end{equation}
Throughout the paper we suppose that the functions $\mathbf{A} :\Omega_T\times \mathbb{R}^{N+1}\rightarrow  \mathbb{R}^N$
are such that $\mathbf{A}(\cdot, \cdot, u, \xi)$ are Lebesgue measurable for all $u\in \mathbb{R}$, $\xi\in \mathbb{R}^N$, and
 $\mathbf{A}(x, t, \cdot, \cdot)$ are continuous for almost all $(x,t)\in \Omega_T.$  We also assume that the following structure conditions are satisfied
 \begin{equation}\label{eq1.2}
 \begin{cases}
 \mathbf{A}(x, t, u, \xi)\,\xi\geqslant K_1\,|\xi|^p,\quad \xi\in \mathbb{R}^N,\\
 |\mathbf{A}(x, t, u, \xi)|\leqslant K_2\,|\xi|^{p-1},
 \end{cases}
 \end{equation}
where $K_1$, $K_2$ are positive constants. The numbers $K_1$, $K_2$, $p$, $q$, $N$ are further
 referred to as the data and  we will write $\gamma$ as a generic positive constant that can be quantitatively determined
 a priori only in terms of the data and that can change from line to line.

Postponing the formal definitions of weak solution, we will proceed to present the main results. 
\begin{theorem}\label{th1.1}
Let u be a locally bounded, local, weak solution to \eqref{eq1.1}, \eqref{eq1.2} in $\Omega_T$, assume also that 
\begin{equation}\label{eq1.3}
0<q<p-1,\quad p<2,\quad p+N(p-2)>0,
\end{equation}
or
\begin{equation}\label{eq1.4}
p-1<q, \quad p>2, \quad q p+N(p-1-q)>0.
\end{equation}
Then $u$ is locally H\"{o}lder continuous in $\Omega_T$.
\end{theorem}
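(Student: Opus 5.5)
The plan is DiBenedetto's method of intrinsic scaling, adapted to the doubly nonlinear structure. Fix $(x_0,t_0)\in\Omega_T$ and let $\omega\ge \operatorname{ess\,osc} u$ on a starting cylinder. The aim is a reduction of oscillation: there are constants $\delta,\sigma\in(0,1)$, depending only on the data, and a nested sequence of intrinsically scaled cylinders $Q_n$ with $\omega_{n+1}\le (1-\sigma)\omega_n$ on $Q_{n+1}$.

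\textbf{Setup and cases.} Set $\mu^\pm=\operatorname{ess\,sup/inf} u$ over the current cylinder. The geometry depends on whether $u$ is far from zero or can change sign.
\begin{itemize}
\item If $\mu^+\ge \omega/4$ and $\mu^-\le -\omega/4$, then $|u|\lesssim\omega$ and the range of $u$ contains a neighborhood of zero. The equation then behaves like $\partial_t(|u|^{q-1}u)$ near level $0$. The time scale is $\theta=\omega^{q+1-p}\rho^p$, natural because $|u|^{q-1}u\sim \omega^q$ and $|Du|^{p-1}\sim(\omega/\rho)^{p-1}$.
\item If instead, say, $\mu^-\ge-\omega/4$ and $\mu^+\ge \omega/2$ (or symmetrically), then near the points where $u$ is close to $\mu^+$ we have $u\approx\mu^+\sim\omega$, or $u$ is much larger. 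The equation is then essentially a $p$-Laplacian type parabolic equation with weight $|u|^{q-1}\sim (\mu^+)^{q-1}$, and the time scale is $\theta=(\mu^+)^{q-1}\omega^{2-p}\rho^p$.
\end{itemize}
In both alternatives I would use the energy estimates for $(u-k)_\pm$. The time-derivative term is $\int_k^u |s|^{q-1}(s-k)_\pm\,ds$, and one needs two-sided bounds comparing it with $(|u|+|k|)^{q-1}(u-k)_\pm^2$. These follow from elementary inequalities valid for all $q>0$, which I assume are in the paper's preliminaries.

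\textbf{Key steps in order.}
\begin{enumerate}
\item Caccioppoli-type energy estimates and logarithmic estimates for truncations.
\item A De Giorgi lemma: if $u$ is close to $\mu^\pm$ only on a small fraction $\nu_0$ of the intrinsic cylinder, then $u$ stays away from $\mu^\pm$ on half of it.
\item An expansion of positivity / measure-theoretic lemma producing the alternative. Since the intrinsic scales in the two regimes differ, I would follow the newer approach of Bögelein–Liao–Duzaar-type or Kuusi–Liao–Vespri arguments. This means working at level $\mu^+-\omega/2^{s}$ and using an integral Harnack inequality: the paper mentions ``integral Harnack type inequalities for sign-changing solutions''.
\item Iteration, giving a geometric oscillation decay and hence Hölder continuity by a standard argument.
\end{enumerate}
The conditions $p+N(p-2)>0$ and $qp+N(p-1-q)>0$ are exactly the positivity of the exponents $\lambda$ in the $L^1$–$L^\infty$ (integral Harnack) estimates. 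In the singular case ($p<2$, $q<p-1$) and the degenerate case ($p>2$, $q>p-1$), the expansion of positivity must be proved through such $L^1$ Harnack estimates rather than through direct comparison, because the two nonlinearities push the scaling in opposite directions.

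\textbf{Main obstacle.} The hardest part is the sign-changing regime near zero, where $|u|^{q-1}$ degenerates for $q>1$ or blows up for $q<1$. There the time scales from the two nonlinearities are incompatible: at level $\pm\omega/4$ the weight is comparable to $\omega^{q-1}$, but near $0$ it is not. I would handle this by proving an integral Harnack inequality for $(u-k)_+$ with $k$ near zero. The inequality must hold in a form $\sup_t\int_{B_\rho}(u-k)_+^{q}\,dx\lesssim \inf_t\int_{B_{2\rho}}(u-k)_+^{q}\,dx+(\rho^p/\theta)^{\cdots}$, with $\lambda$-exponents that are positive under \eqref{eq1.3} or \eqref{eq1.4}. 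Combining this with the $L^\infty$–$L^1$ estimate gives expansion of positivity of $\mu^+-u$ or $u-\mu^-$ from a time slice to a full intrinsic cylinder. I expect the delicate point to be choosing the time scale so that both the singular or degenerate effect in $|Du|^{p-2}$ and the one in $|u|^{q-1}$ are absorbed.
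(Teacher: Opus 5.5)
You name the right ingredients: intrinsic cylinders of length $b\,\omega^{q-p+1}r^p$, a De Giorgi lemma, an integral Harnack inequality, and a final switch to a $p$-Laplacian once $u$ stays away from $0$. But the step that actually reduces the oscillation is missing, and the version you sketch would not work.

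\textbf{The integral Harnack step.} A two-sided bound $\sup_t\int(u-k)_+^q\lesssim\inf_t\int(u-k)_+^q+\dots$ is not available. $\max(u,k)$ is only a subsolution, so testing the equation controls $\int(u-k)_+(\cdot,t)$ by its values at \emph{earlier} times only. The slice $s$ produced by the failed De Giorgi alternative may lie anywhere in the cylinder, e.g.\ near $t_0$, while the expansion of positivity needs measure information on \emph{every} slice. An $L^\infty$--$L^1$ estimate does not close this gap. From $|B\cap\{u\le k\}|\ge\nu|B|$ with $\nu\sim b^{N/p}$ small you only get $\fint(u-k)_+\le(1-\nu)(\mu^+-k)$, and a sup bound with a large constant gives no reduction.

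The paper instead applies the conservation law to the full nonnegative solution $w=\pm(|\mu^\pm|^{q-1}\mu^\pm-|u|^{q-1}u)$:
\begin{itemize}
\item testing by $\zeta^l$ gives \eqref{eq6.1} for $t_1,t_2$ in either order;
\item the flux term is bounded using the test function $(\tau/t)^{1/(2(p-1))}[\pm(\mu^\pm-u)+\epsilon]^{1-\beta}\zeta^{l+p}$;
\item Lemma~\ref{lem2.1} converts $w$ back into measure information, e.g.\ $\mathcal M^{q-1}(\mu^+-u)\lesssim w\lesssim(\mu^+-u)^q$ for $q<1$ and the reverse for $q>1$, with $\mathcal M=\max(\mu^+,-\mu^-)$.
\end{itemize}
This spreads the single-slice bound to all slices of $Q_{r,b\omega^{q-p+1}r^p}$, provided the Harnack error $b^{1/(2-p)}\omega^q$ (resp.\ $b^{q/(q-p+1)}\omega^q$) is dominated by $\nu\,\omega^q\sim b^{N/p}\omega^q$. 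That comparison of powers of $b$, not an $L^\infty$--$L^1$ estimate, is where $p+N(p-2)>0$ and $qp+N(p-1-q)>0$ are used.

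\textbf{The expansion-of-positivity step.} Even with measure information on all slices, your case split misses where a new expansion of positivity is needed.
\begin{itemize}
\item For $p<2$, $q<p-1$, the paper always expands at the end of the range where $|\mu^\pm|\ge\omega/4$. There the equation behaves like the singular $p$-Laplacian (Lemma~\ref{lem2.6}), and De Giorgi (Lemma~\ref{lem2.4}) is run at the other end.
\item For $p>2$, $q>p-1$, expansion at an end away from zero is degenerate (Lemma~\ref{lem2.7}). It needs a waiting time $B\omega^{q-p+1}r^p$ with $B>1$, which does not fit in the short cylinder carrying the measure information.
\item Hence, when $\min(\mu^+,-\mu^-)\le\epsilon_*\omega$, the paper proves Theorem~\ref{th1.4}. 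This is an expansion of positivity \emph{near zero} for sign-changing solutions, where the equation behaves singularly. Its proof uses the weighted logarithmic estimate of Lemma~\ref{lem7.1} plus an argument on the sign of the time derivative.
\item Only when both $|\mu^\pm|\ge\epsilon_*\omega$ does the paper shrink the radius to $(b/B)^{1/p}r$. It then applies Lemma~\ref{lem2.7} from the trivial half-measure alternative at the initial slice.
\end{itemize}

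\textbf{The switching criterion.} Your regime (b), $\mu^-\ge-\omega/4$ and $\mu^+\ge\omega/2$, still allows $u$ to vanish, so $|u|^{q-1}$ is not comparable to $(\mu^+)^{q-1}$ there. The paper switches to the $p$-Laplacian only at the first step where $\max(\mu^+,-\mu^-)\ge(1+\eta_0)\omega$. This forces $\eta_0\omega\le|u|\le C(\eta_0,\sigma_0)\omega$, so $v=(u/\omega)^q$ solves a $p$-Laplacian type equation covered by DiBenedetto's theory.
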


Our main result, Theorem \ref{th1.1} is a consequence of the following statement. Fix $(x_0, t_0)\in \Omega_T$, construct the cylinder
$$Q_{r, b\, \omega^{q-p+1} r^p}:=B_r(x_0)\times (t_0- b\,\omega^{q-p+1} r^p, t_0)\subset \Omega_T$$ 
and let $\mu^+$, $\mu^-$ and $\omega$ be the numbers such that
\begin{equation}\label{eq1.5}
\mu^+\geqslant \sup\limits_{Q_{r, b\,\omega^{q-p+1} r^p}} u,\quad \mu^-\leqslant \inf\limits_{Q_{r, b\, \omega^{q-p+1} r^p}} u,\quad \omega \geqslant \mu^+ -\mu^-,
\end{equation}
where $b\in (0, 1)$ to be fixed depending only on the data.

\begin{propo}\label{pr1.1}
Let u be a locally bounded, local, weak solution to \eqref{eq1.1}, \eqref{eq1.2} in $\Omega_T$, let  conditions \eqref{eq1.3} or \eqref{eq1.4}  be fulfilled.
There exist numbers $b$, $\sigma_0$, $\eta_0$, $\epsilon_0\in (0, 1)$  depending only on the data such that if
\begin{equation}\label{eq1.6}
\max\big(\mu^+, -\mu^-\big)\leqslant (1+\eta_0)\, \omega,
\end{equation}
then
\begin{equation}\label{eq1.7}
\osc\limits_{Q_{\epsilon_0 r, b(\sigma_0 \omega)^{q-p+1} (\epsilon_0 r)^p}} u \leqslant \sigma_0\,\omega.
\end{equation}
\end{propo}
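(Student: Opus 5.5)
Under \eqref{eq1.6} both $|\mu^\pm|$ are comparable to $\omega$, and in the cylinder $Q_{r,b\omega^{q-p+1}r^p}$ the weight $|u|^{q-1}$ can degenerate only near the zero set of $u$. We distinguish two alternatives according to how much of the cylinder is occupied by the set where $u$ is near zero. For this we fix a small $\nu\in(0,1)$, depending on the data, and consider the measure of
$$\{(x,t)\in Q_{r,b\omega^{q-p+1}r^p}: |u|\leqslant \nu\omega\}.$$

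\emph{First alternative: $|u|$ stays away from zero.} Suppose, say, that $u\geqslant \nu\omega$ on a large fraction of the cylinder, or more precisely on a large fraction of some smaller subcylinder. An expansion of positivity argument then propagates this bound to $u\geqslant \nu'\omega$ on a full subcylinder. There $|u|\sim\omega$, so the equation becomes a uniformly parabolic $p$-Laplace type equation. Indeed, writing $\partial_t(|u|^{q-1}u)=q|u|^{q-1}u_t$ with $|u|^{q-1}\simeq \omega^{q-1}$, and rescaling time by $\omega^{q-1}$, the intrinsic scaling $\omega^{q-p+1}r^p$ is exactly the natural one for the $p$-Laplacian with oscillation $\omega$. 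The classical DiBenedetto theory (De Giorgi lemmas plus expansion of positivity for the degenerate and singular $p$-Laplacian) then gives a reduction of oscillation, $\osc u\leqslant \sigma_0\omega$, on a smaller cylinder. The change of variables $v=|u|^{q-1}u$ allows energy estimates to be written for $v$; in this region the map $u\mapsto v$ is bi-Lipschitz with constants $\simeq\omega^{q-1}$.

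\emph{Second alternative: $u$ is near zero on a non-negligible set.} This is where \eqref{eq1.6} and the structure conditions \eqref{eq1.3}, \eqref{eq1.4} enter. If $|u|\leqslant\nu\omega$ on a set of positive relative measure, then $\mu^+$ and $-\mu^-$ both fail to be much larger than $\omega$ without $u$ touching zero. We then run De Giorgi iterations on the truncations $(u-k)_\pm$ with levels $k=\mu^\pm\mp \omega/2^{j}$, applied to the combined time derivative $\partial_t |u|^{q-1}u$. The energy estimates are written with the function $\int_k^u |s|^{q-1}(s-k)\,ds$, which is comparable to $(|u|+|k|)^{q-1}(u-k)^2$. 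At levels $k$ with $|k|\simeq\omega$ (guaranteed by \eqref{eq1.6}, since $\mu^\pm$ are within a factor of $\omega$), this reduces to standard $\omega^{q-1}(u-k)_\pm^2$ terms in the intrinsic cylinder. A measure-theoretic lemma (a logarithmic estimate, giving smallness in time slices) then shows that the set where $u$ is near $\mu^+$ (or near $\mu^-$) has small measure, and the De Giorgi lemma gives $\sup u\leqslant\mu^+-\sigma\omega$ or the symmetric bound below.

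\emph{Main obstacle.} The hardest part is the sign-changing regime where both $u>0$ and $u<0$ occur and levels cross zero. There $|u|^{q-1}$ blows up (for $q<1$) or vanishes (for $q>1$), so the $L^2$-type energy and the $L^p$ gradient term scale differently. I would handle this with the integral Harnack inequalities of the type $\sup_t\int_{B_r}|u|^{q+1}\leqslant\gamma\inf_t\int|u|^{q+1}+\gamma(\ldots)$ for sign-changing $u$; the conditions $p+N(p-2)>0$ and $qp+N(p-1-q)>0$ are exactly those making such $L^1$–$L^\infty$ estimates available. These would be used to exclude the possibility that $u$ sits near zero on most of the cylinder while $\omega$ does not shrink. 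Combining the two alternatives and choosing $b,\sigma_0,\eta_0,\epsilon_0$ in that order yields \eqref{eq1.7}.
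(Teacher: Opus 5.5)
Your argument rests on the claim that under \eqref{eq1.6} both $|\mu^+|$ and $|\mu^-|$ are comparable to $\omega$. From this you conclude that every De Giorgi level $k=\mu^\pm\mp\omega/2^j$ has $|k|\simeq\omega$ and that the weight $|u|^{q-1}$ can be frozen at $\omega^{q-1}$. The claim is false. \eqref{eq1.6} is only an upper bound. It holds in particular whenever $\mu^-\leqslant 0\leqslant\mu^+$, and then $\mu^+$ or $-\mu^-$ can be arbitrarily small compared with $\omega$. At levels near such an extremum $|k|\ll\omega$. The energy density $(|u|+|k|)^{q-1}(u-k)^2$ is then not comparable to $\omega^{q-1}(u-k)^2$: it degenerates for $q>1$ and blows up for $q<1$. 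The intrinsic time scale changes as well, so your second alternative does not close.

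The regime where $u$ is one-signed and away from zero is the complementary one. If $\mu^+>(1+\eta_0)\omega$, then $\mu^-\geqslant\mu^+-\omega>\eta_0\omega$, and symmetrically for $-\mu^-$. So the reduction to a $p$-Laplace equation via $v=(u/\omega)^q$ in your first alternative belongs to the iteration in the proof of Theorem \ref{th1.1}, not to Proposition \ref{pr1.1}. Your dichotomy is also not well matched to its conclusion. Smallness of $|\{|u|\leqslant\nu\omega\}|$ does not give $u\geqslant\nu\omega$ on most of the cylinder, since $u$ may be split between $u>\nu\omega$ and $u<-\nu\omega$. Finally, you never explain how measure information obtained on one time slice reaches the whole cylinder; a logarithmic estimate only propagates it forward in time.

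The missing mechanism is a split according to the size of the extremes, not of $|u|$.
\begin{itemize}
\item For $q<p-1$, $p<2$: one may assume $\max(\mu^+,-\mu^-)\geqslant\omega/4$, since otherwise $\osc u\leqslant\omega/2$ already. One then works at that extremum, which lies between $\omega/4$ and $2\omega$ in absolute value.
\item For $q>p-1$, $p>2$: one distinguishes $-\mu^-\leqslant\epsilon_*\omega$, $\mu^+\leqslant\epsilon_*\omega$, and $\min(\mu^+,-\mu^-)\geqslant\epsilon_*\omega$. The last case is handled directly by Lemma \ref{lem2.7}, launched from a single slice of a shorter subcylinder.
\end{itemize}
In the remaining cases, suppose Lemma \ref{lem2.4} at the opposite extremum does not reduce the oscillation. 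Then there is one slice $s$ on which $\pm(\mu^\pm-u(\cdot,s))\geqslant\omega/4$ on a set of relative measure $\nu=b^{N/p}/\gamma$. Theorems \ref{th1.2} and \ref{th1.3} transfer this to every $t\in(t_0-b\omega^{q-p+1}r^p,t_0)$, backward as well as forward, because they bound $\sup_t$ by $\inf_t$. They are applied to the nonnegative quantity $\pm(|\mu^\pm|^{q-1}\mu^\pm-|u|^{q-1}u)$, not to $|u|^{q+1}$, which measures distance from $0$ rather than from the extremum.

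This transfer is exactly where $p+N(p-2)>0$, resp.\ $qp+N(p-1-q)>0$, is used. The Harnack error term is $\gamma b^{1/(2-p)}\omega^q$, resp.\ $\gamma b^{q/(q-p+1)}\omega^q$, and $b$ is fixed so that this is dominated by $b^{N/p}\omega^q/\gamma$. One then concludes by Lemma \ref{lem2.6} for $p<2$. For $p>2$ one uses Theorem \ref{th1.4}: under $\pm\mu^\pm\leqslant\epsilon_*\omega$ it turns the uniform-in-time positive-fraction bound into $\mp u\geqslant\xi_*\omega$. This gives $\osc u\leqslant(1+\eta_0-\xi_*)\omega$, which is what forces $\eta_0\leqslant\xi_*/2$. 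Nothing in your plan plays the role of Theorem \ref{th1.4}, and its case is exactly the one you assumed away.
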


A mathematical interest of  equation \eqref{eq1.1} lies in its singularity or/and degeneracy at points where either $u = 0$ or $Du = 0$. Moreover, it covers the porous medium equation ($p = 2$), the parabolic $p$-Laplace equation ($q = 1$) and
 Trudinger's equation ($q = p -1$).  These classes of equations have numerous applications
 and have been attracting attention for several decades (see, see, e.g. the monographs \cite{DiB1, DiB3, Lad, Vaz, Wu }, survey \cite{DiB2}
 and references therein).  
 
 The issue of local H\"{o}lder regularity for  equation \eqref{eq1.1} has been investigated by a number of authors, in various forms and with different notions of solution. 
 The well-studied ranges are the borderline case $p-1=q$, the doubly singular case, i.e.
$p-1<q$ and $1<p<2$, as well as the doubly degenerate case, i.e. $q < p-1$ and $p > 2$, see \cite{And, Ara, Bog1, Bog2, Bog3, Bog4, Cas, Cia2, Die, Gia, Hen, Hen1, Hen2, Iva1, Iva2, Iva3, Iva4, Kin1, Kuu1, Kuu2, Lia, Lia1, Por, Urb, Ves1, Ves} for the details. It seems, the degenerate/singular case $q< p-1$ and $p<2$, as well as the singular/ degenerate case $p-1< q$ and $p>2$ for sign changing solutions  have not yet been studied. These cases were investigated only for non-negative solutions in \cite{Cia1}. The main novelty of our results lies in the extension of the known ranges of $q$ and $p$ to other, previously unstudied cases, namely, to degenerate/ singular and singular/ degenerate ranges.

The main step in the proof of Proposition \ref{pr1.1}  are the integral Harnack type inequalities for sign-changing solutions. To formulate these results, we note the following simple facts
$$\pm\big(|\mu^{\pm}|^{q-1}\mu^{\pm}-|u|^{q-1}u\big)\geqslant 0,\quad \text{if}\quad \mu^-\leqslant u\leqslant \mu^+.$$
\begin{theorem}\label{th1.2}
Let $u$ be a  locally bounded, local, weak solution to \eqref{eq1.1}, \eqref{eq1.2} and assume that $q<p-1$ and $p< 2$. Construct the cylinder
$Q_{r, t-s}(y, t):=B_{r}(y)\times (s, t) \subset \Omega_T$, and let $\mu^-\leqslant u \leqslant \mu^+$ in $Q_{r, t-s}(y, t)$. Then for any 
$\delta,\,\sigma \in (0, 1)$, any $l\geqslant \frac{2 p^2}{2-p}$, any $\zeta(x)\in C^1_0(B_{r}(y))$, such that $\zeta(x)=1$ in $B_{r(1-\sigma)}(y)$, $0\leqslant \zeta(x)\leqslant 1$, $|D \zeta(x)|\leqslant \frac{1}{\sigma r}$ there holds
\begin{multline}\label{eq1.8}
\sup\limits_{s\leqslant \tau\leqslant  t}\fint\limits_{B_{r}(y)\times\{\tau\}}\pm\big(|\mu^{\pm}|^{q-1}\mu^{\pm}-|u|^{q-1}u\big)\zeta^l(x)\,dx \leqslant \\\leqslant \frac{1}{1-\delta} \inf\limits_{s\leqslant\tau \leqslant t}\fint\limits_{B_{r}(y)\times\{\tau\}}\pm\big(|\mu^{\pm}|^{q-1}\mu^{\pm}-|u|^{q-1}u\big)\zeta^l(x)\,dx+\\+\frac{\gamma}
{(1-\delta)\delta^\gamma\,\sigma^\gamma}
\big[\max(\mu^+, -\mu^-)\big]^{\frac{(1-q)(p-1)}{2-p}}
\Big(\frac{t-s}{r^p}\Big)^{\frac{1}{2-p}},
\end{multline}
with a constant $\gamma >0$ depending only on the data and $l$.
\end{theorem}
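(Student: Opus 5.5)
We follow the classical DiBenedetto argument for the $L^1$--$L^1$ Harnack inequality for singular equations, applied to the nonnegative quantity
$$w_\pm:=\pm\big(|\mu^{\pm}|^{q-1}\mu^{\pm}-|u|^{q-1}u\big)\geqslant 0 .$$
The solution enters only through the time derivative of $|u|^{q-1}u$. Throughout, $M:=\max(\mu^+,-\mu^-)$, and we treat the sign $+$; the other case is symmetric.

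\textbf{Step 1: the integral identity.} We test \eqref{eq1.1} with $\zeta^l(x)$, using the standard Steklov averaging to justify this. For $s\leqslant \tau_1<\tau_2\leqslant t$ this gives
$$\Big|\int_{B_r\times\{\tau_2\}} w_+\zeta^l\,dx-\int_{B_r\times\{\tau_1\}} w_+\zeta^l\,dx\Big|\leqslant \frac{\gamma\, l}{\sigma r}\int_{\tau_1}^{\tau_2}\!\!\int_{B_r}|Du|^{p-1}\zeta^{l-1}\,dx\,dt ,$$
by the upper bound in \eqref{eq1.2}. It then suffices to estimate the gradient term by
$$\delta\sup_\tau\int_{B_r\times\{\tau\}} w_+\zeta^l\,dx \;+\;\gamma(\delta,\sigma)\, M^{\frac{(1-q)(p-1)}{2-p}}\Big(\frac{t-s}{r^p}\Big)^{\frac1{2-p}} r^N .$$
Indeed, choosing $\tau_1$ to realize the infimum and $\tau_2$ the supremum, then absorbing the $\delta$-term, yields \eqref{eq1.8} with the factor $\frac1{1-\delta}$.

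\textbf{Step 2: the weighted energy estimate.} Since $p<2$, we cannot control $|Du|^{p-1}$ by $|Du|^p$ directly without losing the right scaling. Instead we use a "logarithmic-type" energy estimate with weight $(\varepsilon+\mu^+-u)^{-\theta}$ for some $\theta\in(0,1)$, and $\varepsilon>0$ that we eventually send to zero or choose suitably. Concretely, we test with $\varphi=\big((\mu^+-u)+\varepsilon\big)^{1-\theta}\zeta^{l}$, or with the analogous expression built from $w_+$. The ellipticity in \eqref{eq1.2} then gives
$$\iint |Du|^p(\mu^+-u+\varepsilon)^{-\theta}\zeta^l \leqslant \gamma\sup_\tau\int w_+^{\cdot}\ldots+\gamma(\sigma r)^{-p}\iint(\mu^+-u+\varepsilon)^{p-\theta}\zeta^{l-p}.$$
Next we apply H\"older's inequality:
$$\iint|Du|^{p-1}\zeta^{l-1}\leqslant\Big(\iint|Du|^p(\mu^+-u+\varepsilon)^{-\theta}\zeta^l\Big)^{\frac{p-1}{p}}\Big(\iint(\mu^+-u+\varepsilon)^{\theta(p-1)}\zeta^{l-p}\Big)^{\frac1p}.$$
The condition $l\geqslant 2p^2/(2-p)$ is exactly what keeps the powers of $\zeta$ nonnegative after the repeated H\"older and Young steps.

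\textbf{Step 3: converting to $w_+$ and closing.} To pass back to $w_+$, we use the elementary comparisons between $\mu^+-u$ and $|\mu^+|^{q-1}\mu^+-|u|^{q-1}u$. Because $q<1$ and $|u|\leqslant M$, we have
$$\mu^+-u\leqslant \gamma\, (\mu^+-u)^{q}\cdot(\mu^+-u)^{1-q}\leqslant\gamma\, w_+^{\,?}\,M^{1-q}$$
in the appropriate direction. More precisely, $w_+\geqslant \gamma^{-1}(\mu^+-u)^q$ when the solution changes sign, and $w_+\approx M^{q-1}(\mu^+-u)$ away from zero; we split into these two regimes. This splitting is the sign-changing subtlety, and it is where the factor $M^{(1-q)(p-1)/(2-p)}$ arises. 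We choose $\theta$ so that the exponents match $w_+$ to the first power, bound everything by
$$(t-s)^{\alpha}\,(\sigma r)^{-\beta}\Big(\sup_\tau\int w_+\zeta^l\Big)^{\kappa}M^{\lambda}, \qquad \kappa<1 ,$$
and apply Young's inequality with $\delta$. Scaling then forces the final exponents $\frac{1}{2-p}$ and $\frac{(1-q)(p-1)}{2-p}$.

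\textbf{Main obstacle.} The main obstacle is Step 3. We need a weight whose energy estimate is controlled by the sup of $\int w_+\zeta^l$ itself, rather than by $M$, uniformly across the zero set of $u$ where $|u|^{q-1}u$ is not Lipschitz. If the direct weight fails, the first fallback is to test with $w_+^{\,1-\theta}$ in place of powers of $\mu^+-u$. This makes the time part integrate exactly, and puts the difficulty into the comparison $|Dw_+|\approx |u|^{q-1}|Du|$ near $u=0$, which we would handle via the bound $q<p-1$.
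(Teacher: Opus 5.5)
Your Step 1 is the paper's starting point \eqref{eq6.1}. The gap is in Step 2: a weight exponent $\theta\in(0,1)$ cannot produce the exponent $\frac{1}{2-p}$ in \eqref{eq1.8}.

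\textbf{Why $\theta\in(0,1)$ fails.} With the positive-power test function $(\mu^+-u+\varepsilon)^{1-\theta}\zeta^l$, the time part of the energy estimate is $\int G_\theta(u)\zeta^l\,dx$, where
\[
G_\theta(u)=\int_u^{\mu^+}q|z|^{q-1}(\mu^+-z+\varepsilon)^{1-\theta}\,dz .
\]
This is comparable to $(\mu^+-u+\varepsilon)^{1-\theta}w_+$, so it is superlinear in the solution. The only way to bound it by $S:=\sup_\tau\fint w_+\zeta^l$ is through $\mu^+-u\leqslant 2M$. That costs a factor $(M+\varepsilon)^{1-\theta}$, and no choice of $\varepsilon$ removes it.

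Carry this through your H\"older step (second factor via $\mu^+-u\leqslant\gamma M^{1-q}w_+$) and Young's inequality with exponent $\frac{(1+\theta)(p-1)}{p}$ on $S$. The best remainder you can reach is
\[
\Big(\frac{t-s}{r^p}\Big)^{\frac{1}{1-\theta(p-1)}}M^{\frac{(p-1)(1-q\theta)}{1-\theta(p-1)}}=M^q X^{\frac{1}{1-\theta(p-1)}},\qquad X:=\frac{t-s}{r^pM^{q-p+1}} .
\]
By contrast, \eqref{eq1.8} has $M^qX^{\frac{1}{2-p}}$. Since $\frac{1}{1-\theta(p-1)}<\frac{1}{2-p}$ for $\theta<1$, your bound is strictly weaker when $X\ll 1$. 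That is exactly the regime used in Section 3, where $X\approx b$ and one needs $b^{\frac{1}{2-p}}\ll b^{\frac{N}{p}}$.

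Your appeal to scaling does not repair this. The equation has two independent scalings (space--time and amplitude), so every $M^qX^a$ is scale invariant, and the exponent $a$ has to come out of the argument. Your fallback, testing with $w_+^{1-\theta}$ for $\theta<1$, has the same defect, since $w_+^{2-\theta}$ is again superlinear.

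\textbf{The missing idea: take the weight exponent $\beta>1$.} The paper tests with $(\tau/(t-s))^{\frac{1}{2(p-1)}}[\mu^+-u+\epsilon]^{1-\beta}\zeta^{l+p}$, where $1<\beta<\min(p,\frac{1}{p-1})$. Then:
\begin{itemize}
\item The weighted gradient is controlled by the final-time value of a primitive that is bounded by $\epsilon^{1-\beta}w_+$, with no $M$.
\item The cutoff term is handled by $(\mu^+-u+\epsilon)^{p-\beta}\leqslant\epsilon^{p-\beta-1}(\gamma M^{1-q}w_++\epsilon)$.
\item The second H\"older factor uses $\beta(p-1)<1$ and Jensen. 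This is where $l\geqslant\frac{2p^2}{2-p}$ enters, through $\frac{l-p^2}{\beta(p-1)}\geqslant l$.
\end{itemize}
Choosing $\epsilon=\big(M^{1-q}\frac{t-s}{r^p}\big)^{\frac{1}{2-p}}$ balances these terms. Everything collapses to $(\epsilon M^{q-1})^{\frac{1-\beta(p-1)}{p}}(S+\epsilon M^{q-1})^{\frac{(1+\beta)(p-1)}{p}}$. Young's inequality then gives $\delta S+\gamma\epsilon M^{q-1}$, which is exactly the remainder in \eqref{eq1.8} and does not depend on $\beta$.

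\textbf{No splitting into regimes is needed.} Only the one-sided bound $\mu^+-u\leqslant\gamma M^{1-q}w_+$ is used, and it holds everywhere because $q<1$. Your stated lower bound $w_+\geqslant\gamma^{-1}(\mu^+-u)^q$ is false in general for $q<1$: Lemma \ref{lem2.1} gives the reverse inequality $w_+\leqslant\gamma(\mu^+-u)^q$. The lower bound holds only when $u\leqslant 0\leqslant\mu^+$.
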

\begin{theorem}\label{th1.3}
Let $u$ be a  locally bounded, local, weak solution to \eqref{eq1.1}, \eqref{eq1.2} and assume that $q> p-1$ and $q>1$. Construct the cylinder
$Q_{r, t-s}(y, t):=B_{r}(y)\times (s, t) \subset \Omega_T$, let $\mu^-\leqslant u \leqslant \mu^+$ in $Q_{r, t-s}(y, t)$.  
Then for any 
$\delta,\,\sigma \in (0, 1)$, any $l\geqslant \frac{2 q p^2}{q-p+1}$, any $\zeta(x)\in C^1_0(B_{r}(y))$, such that $\zeta(x)=1$ in $B_{r(1-\sigma)}(y)$, $0\leqslant \zeta(x)\leqslant 1$, $|D \zeta(x)|\leqslant \frac{1}{\sigma\,r}$ there holds
\begin{multline}\label{eq1.9}
\sup\limits_{s\leqslant \tau\leqslant t}\fint\limits_{B_{r}(y)\times\{\tau\}}\pm\big(|\mu^{\pm}|^{q-1}\mu^{\pm}-|u|^{q-1}u\big)\zeta^l(x)\, dx \leqslant\\\leqslant \frac{1}{1-\delta}
\inf\limits_{s\leqslant\tau\leqslant t}\fint\limits_{B_{r}(y)\times\{\tau\}} \pm\big(|\mu^{\pm}|^{q-1}\mu^{\pm}-|u|^{q-1}u\big)\zeta^l(x)\,dx +\frac{\gamma}{(1-\delta)\delta^\gamma \sigma^\gamma}
\Big(\frac{t-s}{r^{p}}\Big)^{\frac{q}{q-p+1}},
\end{multline}
with a constant $\gamma >0$ depending only on the data and $l$.
\end{theorem}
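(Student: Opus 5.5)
We prove \eqref{eq1.9} for the ``$+$'' sign; the ``$-$'' case follows by applying it to $-u$, which solves an equation of the same structure with $\mu^\pm$ replaced by $-\mu^\mp$. Set $v:=|\mu^+|^{q-1}\mu^+-|u|^{q-1}u\geqslant 0$. The approach is the classical DiBenedetto--Gianazza--Vespri $L^1$--$L^1$ Harnack argument: test \eqref{eq1.1} with $\zeta^l$ alone and integrate over $B_r(y)\times(\tau_1,\tau_2)$, $s\leqslant\tau_1<\tau_2\leqslant t$. Since $\partial_t v=-\partial_t(|u|^{q-1}u)$, using \eqref{eq1.2} and $|D\zeta|\leqslant 1/(\sigma r)$ we get
\begin{equation*}
\Big|\int_{B_r(y)} v\zeta^l\,dx\Big|_{\tau_1}^{\tau_2}\Big|\leqslant \frac{\gamma}{\sigma r}\iint_{B_r(y)\times(s,t)}|Du|^{p-1}\zeta^{l-1}\,dx\,d\tau .
\end{equation*}
Hence, after taking the supremum and infimum over $\tau_1,\tau_2$, the theorem reduces to bounding the gradient integral on the right by $\delta\sup_\tau\int v\zeta^l\,dx$ plus the error term in \eqref{eq1.9}. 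This is the source of the factor $\frac1{1-\delta}$.

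The core step is a weighted energy estimate. Since $q>1$, $\mu^+-u$ is comparable to $v$ up to powers of $|u|,\mu^+$, but $v$ is the natural quantity here. We test with $v^{-\alpha}$-type functions: concretely $\varphi=(v+\varepsilon)^{-\alpha}\zeta^l$ with small $\alpha\in(0,1)$ and $\varepsilon>0$. The leading term gives
\begin{equation*}
\iint (v+\varepsilon)^{-1-\alpha}|u|^{q-1}|Du|^p\zeta^l\,dx\,d\tau\leqslant \gamma\sup_\tau\int (v+\varepsilon)^{1-\alpha}\zeta^l\,dx+\frac{\gamma}{(\sigma r)^p}\iint (v+\varepsilon)^{p-1-\alpha}|u|^{(q-1)(1-p)}\zeta^{l-p}\,dx\,d\tau ,
\end{equation*}
where we used $D v=-q|u|^{q-1}Du$. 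Then we split
\begin{equation*}
|Du|^{p-1}=\big[(v+\varepsilon)^{-1-\alpha}|u|^{q-1}|Du|^p\big]^{\frac{p-1}{p}}(v+\varepsilon)^{\frac{(1+\alpha)(p-1)}{p}}|u|^{-\frac{(q-1)(p-1)}{p}}
\end{equation*}
and apply H\"older. The weights $|u|^{q-1}$ are the delicate point. Because $q>1$ the weight degenerates at $u=0$. To handle it, we express $|u|^{-(q-1)}$ via $|Dv|=q|u|^{q-1}|Du|$ and carry out the whole estimate in terms of $v$, $Dv$. In that language the equation reads $\partial_t v=\mathrm{div}\,\tilde{\mathbf A}$ with $|\tilde{\mathbf A}|\lesssim |Dv|^{p-1}|u|^{-(q-1)(p-1)}$, and $|u|^{q-1}\gtrsim$ depends on $v$ only where $u$ is near $\mu^+$. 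This step, producing exponents that close without any dependence on $\mu^\pm$ (note \eqref{eq1.9} has no $\mu$ factor), is the main obstacle.

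The key structural fact we use is $q>p-1$. It gives a concavity in the time-space scaling, so that Young's inequality yields a bound of the form
\begin{equation*}
\frac{1}{\sigma r}\iint|Du|^{p-1}\zeta^{l-1}\leqslant \delta\sup_\tau\int v\zeta^l\,dx+\gamma\,\delta^{-\gamma}\sigma^{-\gamma}r^N\Big(\frac{t-s}{r^p}\Big)^{\frac{q}{q-p+1}} .
\end{equation*}
The exponent $\frac{q}{q-p+1}$ is dictated by dimensional analysis: $v$ scales like $u^q$ and time like $u^{q-p+1}r^p$. The largeness $l\geqslant \frac{2qp^2}{q-p+1}$ is chosen so that all the negative powers of $\zeta$ produced by the Young inequalities are absorbed. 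Finally we let $\varepsilon\to0$, divide by $|B_r|$ to pass to averages, and combine with the first display to conclude \eqref{eq1.9}.
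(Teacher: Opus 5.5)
There is a genuine gap, and it is exactly the step you call the main obstacle. Your outer reduction matches the paper: testing with $\zeta^l$ reduces everything to bounding $\frac{1}{\sigma r}\iint|Du|^{p-1}\zeta^{l-1}$. The final Young step also matches. The middle estimate, however, is never carried out, and with your test function it cannot be.

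Because $Dv=-q|u|^{q-1}Du$, testing with $(v+\varepsilon)^{-\alpha}\zeta^l$ puts the weight $|u|^{q-1}$ into the coercive term. As a result, both the Young remainder $(v+\varepsilon)^{p-1-\alpha}|u|^{-(q-1)(p-1)}\zeta^{l-p}|D\zeta|^p$ and the integrand of the second H\"older factor, $(v+\varepsilon)^{(1+\alpha)(p-1)}|u|^{-(q-1)(p-1)}$, contain negative powers of $|u|$. A sign-changing solution may vanish anywhere in the cylinder, even on a set of positive measure. Since $q>1$, these factors are then unbounded, and nothing controls them by $v$. Rewriting the equation in terms of $v$ only moves the same singular coefficient $|u|^{-(q-1)(p-1)}$ into the flux. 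Your last display is therefore the desired inequality asserted, not derived.

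The missing idea is to weight with the \emph{linear} distance $w:=\pm(\mu^\pm-u)$ instead of $v$. The paper tests with $(\tau/t)^{\frac{1}{2(p-1)}}(w+\epsilon)^{1-\beta}\zeta^{l+p}$, where $1<\beta<\min\big(p,\frac{q}{p-1}\big)$; this range is nonempty because $q>p-1$. This choice has three effects.
\begin{itemize}
\item The coercive term becomes $(w+\epsilon)^{-\beta}|Du|^p\zeta^{l+p}$, with no $|u|$-weight.
\item The parabolic term becomes $\int_0^{w}q|z\mp\mu^\pm|^{q-1}(z+\epsilon)^{1-\beta}\,dz\leqslant\epsilon^{1-\beta}\,\pm\big(|\mu^\pm|^{q-1}\mu^\pm-|u|^{q-1}u\big)$. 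So the degeneracy at $u=0$ enters only through this primitive, where it is harmless.
\item The remaining terms involve only $(w+\epsilon)^{p-\beta}$ and $(w+\epsilon)^{\beta(p-1)}$. These are converted into $v$ by the one-sided bound $w^q\leqslant\gamma v$, which follows from Lemma~\ref{lem2.1} and $w\leqslant|\mu^\pm|+|u|$ precisely because $q>1$. One then applies Jensen with the exponents $\frac{p-\beta}{q}<1$ and $\frac{\beta(p-1)}{q}<1$. The latter, together with $\frac{q(l-p^2)}{\beta(p-1)}\geqslant l$, is where the lower bound on $l$ comes from.
\end{itemize}

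Now set $\epsilon=\big(\frac{t-s}{r^p}\big)^{\frac{1}{q-p+1}}$, so that $\frac{t-s}{r^p}\epsilon^{p-q-\beta}=\epsilon^{1-\beta}$. Write $\mathcal I(\tau)=\fint_{B_r(y)\times\{\tau\}}\pm(|\mu^\pm|^{q-1}\mu^\pm-|u|^{q-1}u)\zeta^l\,dx$. The two H\"older factors then combine to $\gamma\sigma^{-\gamma}\epsilon^{\frac{q-\beta(p-1)}{p}}\big(\sup_\tau\mathcal I(\tau)+\epsilon^q\big)^{\frac{p-1}{p}(1+\frac{\beta}{q})}$. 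This expression is homogeneous of degree one in $(\sup_\tau\mathcal I,\epsilon^q)$. The exponent on the second factor is below one exactly because $\beta(p-1)<q$. Young's inequality therefore gives $\delta\sup_\tau\mathcal I+\gamma\delta^{-\gamma}\sigma^{-\gamma}\epsilon^q$, with no dependence on $\mu^\pm$, which is the error term in \eqref{eq1.9}.
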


\begin{remark}
Theorems \ref{th1.2}, \ref{th1.3} are well-known for the parabolic $p$-Laplacian  or  for non-negative solutions of porous medium equation, as well as of doubly nonlinear equations with $\pm(\mu^{\pm}-u)$ replaced by $u$ (see e.g.\cite{DiB1, DiB3, Bog3}).
Inequality similar to \eqref{eq1.8} was proved in \cite{Cia1} for non-negative solutions to doubly nonlinear parabolic equations.
\end{remark}

In the proof, we use DiBenedetto's rescaling method \cite{DiB1}.
The main idea is to reduce the proof of the H\"{o}lder continuity to a singular case, that is, either a singular parabolic $p$-Laplacian in the case $q < p -1$ and $p < 2$, or a singular parabolic equation with double nonlinearity in the case $q > p -1$. Thus, we  work in cylinders $Q_{r, b\,\omega^{q-p+1} r^p}$ with some small  $b\in (0, 1)$.   Using the integral Harnack inequalities, we fix number $b\in (0, 1)$ depending only on the data and obtain a result on the propagation of positivity in measure for all $t\in (t_0-b\,\omega^{q-p+1} r^p, t_0)$.  This result together with the expansion of positivity, Lemma \ref{lem2.6} (see Section $2$ below) implies inequality \eqref{eq1.7} in the degenerate/ singular case. So, having  Theorem \ref{th1.2} in hand, the proof of Proposition \ref{pr1.1} in the case $q<p-1$ and $p<2$ becomes almost  obvious (see Section $3$ below). In the  case  $q>p-1$  we  additionally need the following theorem on the expansion of positivity for sign  solutions. 

\begin{theorem}\label{th1.4}
Let $u$ be a locally bounded, local, weak solution to \eqref{eq1.1}, \eqref{eq1.2} in $\Omega_T$ and let $q> p-1$. Assume also that
\begin{equation}\label{eq1.10}
|B_r(x_0)\cap\big\{\pm (\mu^{\pm}-u(\cdot, t))\geqslant \xi\,\omega\big\}|\geqslant \alpha|B_r(x_0)|,
\end{equation}
for all $t\in (t_0-b\,\omega^{q-p+1} r^p, t_0)$ and with some $b$, $\xi$, $\alpha \in (0, 1)$, then there exist numbers   $\epsilon_*$, $\xi_*\in (0, 1)$ depending only on the data, $\xi$, $\alpha$ and $b$ such that
\begin{equation}\label{eq1.11}
\mp u(x, t)\geqslant \xi_*\,\omega,\quad  (x, t)\in B_{\frac{r}{2}}(x_0)\times (t_0-\frac{1}{4}\,b\,\omega^{q-p+1} r^p, t_0),
\end{equation}
provided that 
\begin{equation}\label{eq1.12}
\pm\mu^{\pm}\leqslant \epsilon_*\,\omega.
\end{equation}
\end{theorem}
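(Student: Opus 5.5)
By symmetry I treat the upper sign only: assume $|B_r(x_0)\cap\{\mu^+-u(\cdot,t)\geqslant \xi\omega\}|\geqslant\alpha|B_r(x_0)|$ for all $t\in(t_0-b\,\omega^{q-p+1}r^p,t_0)$ and $\mu^+\leqslant\epsilon_*\omega$; the goal is $-u\geqslant\xi_*\omega$ on the smaller cylinder. The idea is to pass to the nonnegative function $v:=\mu^+-u\geqslant0$. Choose $\epsilon_*\leqslant \xi/2$. Since $u\leqslant\mu^+\leqslant\epsilon_*\omega$, the set $\{v\geqslant\xi\omega\}$ lies inside $\{u\leqslant -\xi\omega/2\}$. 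On this set $u$ is bounded away from zero from below, and $|u|\leqslant \mu^+-\mu^-\leqslant\omega$ up to $\mu^+$, so $|u|\in[\xi\omega/2,2\omega]$ there. Hence the degeneracy of $\partial_t(|u|^{q-1}u)$ is controlled wherever $v$ is large.

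The key point is that once we have shown $u\leqslant-\xi_*\omega$, the equation becomes uniformly parabolic-in-time in the $q$-sense. Writing $w=-u$, it reads $\partial_t(w^q)-\operatorname{div}\tilde{\mathbf A}=0$ with $w\approx\omega$. The conclusion is therefore the expansion of positivity for nonnegative solutions of the doubly nonlinear equation in $w$. This is Lemma \ref{lem2.6} (or its doubly nonlinear analogue for $q>p-1$, with the intrinsic time scale $\omega^{q-p+1}r^p$). To use it, I must first convert the measure information on $v$ into measure information on $w$ on the region where $w>0$. I do this with a De Giorgi type argument for the truncations $(u+k)_-$ with levels $k\in[\xi_*\omega,\xi\omega/2]$.

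\textbf{Step 1.} From the measure assumption, which holds for every time slice, I run the standard ``shrinking lemma'' for $u$ near negative levels. Energy estimates for $(u+k)_+$ with $k=\xi\omega/2^j$ are integrated over $Q_{r,b\omega^{q-p+1}r^p}$. Combined with the De Giorgi isoperimetric inequality and the slicewise lower bound $\alpha$, they give that $|\{u>-\xi\omega/2^{j}\}\cap Q|$ becomes an arbitrarily small fraction $\nu$ of $|Q|$ for $j$ large. There is a difficulty in the energy estimate: the time part involves $\int_{-k}^{u}|s|^{q-1}\,ds$-type terms near $u=0$. I handle it using $u\leqslant\epsilon_*\omega$, which keeps the relevant range of $u$ within $[-\xi\omega,\epsilon_*\omega]$. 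For $q>1$ the weight $|u|^{q-1}$ is at most $\omega^{q-1}$, and the term is bounded by $\omega^{q-1}k^2$. For $q<1$ I instead use the Lipschitz bound of $s\mapsto|s|^{q-1}s$ away from $0$ together with the smallness of $\epsilon_*$.

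\textbf{Step 2.} A De Giorgi iteration for $(u+k)_+$ on shrinking cylinders, again using $\mu^+\leqslant\epsilon_*\omega$ to absorb the part where $u>0$, turns the smallness $\nu$ into $u\leqslant-\xi\omega/2^{j_*+1}$ on $B_{r/2}\times(t_0-\tfrac14 b\omega^{q-p+1}r^p,t_0)$. This yields $\xi_*=\xi 2^{-j_*-1}$ and fixes $\epsilon_*\leqslant\xi_*/2$. If the time scale of Step~1 produces the conclusion only on a set of times smaller than required, I will instead invoke the expansion of positivity (Lemma \ref{lem2.6}) for $w$ once it is positive on a large set, to recover the full interval.

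I expect the main obstacle to be the energy estimates across the zero level of $u$, where the time derivative degenerates or becomes singular. Everything relies on forcing the truncation levels to stay in the region $u<0$, and this is exactly why the quantitative choice $\epsilon_*\ll\xi_*$ is needed. I will first try to choose $\epsilon_*$ so small that the positive part of $u$ contributes only an $\epsilon_*^{\,q}$-error, and absorb that error in the iteration.
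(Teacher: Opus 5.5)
\paragraph{There is a genuine gap, in passing from Step 1 to Step 2.}
Grant Step 1 as a space-time statement: on the full cylinder $Q$ of length $\theta=b\,\omega^{q-p+1}r^p$ it gives $|\{u>-k_{j_*}\}\cap Q|\le \gamma\, j_*^{-(p-1)/p}|Q|$. That decay is only polynomial in $j_*$.

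Since $q>p-1$, the intrinsic time at the final level $\xi_*\omega$, namely $(\xi_*\omega)^{q-p+1}r^p$, is much shorter than $\theta$. A De Giorgi iteration at that level on a cylinder of length comparable to $\theta$ (Lemma \ref{lem2.5}, condition \eqref{eq2.12}) therefore needs $\nu\le \frac{1}{\gamma}\,\xi_*^{q-p+1}/b$. With $\xi_*=\xi\,2^{-j_*-1}$ this is exponentially small in $j_*$. So the requirement $\gamma j_*^{-(p-1)/p}\le \frac1\gamma (\xi 2^{-j_*-1})^{q-p+1}/b$ can never be met, and the choice of $j_*$ is circular.

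Short intrinsic subcylinders do not rescue this. De Giorgi there needs smallness in \emph{every} such subcylinder, which an average over $Q$ does not give, since the bad set may concentrate in a thin time layer. And the shrinking argument cannot itself be run on short cylinders, because the time term at level $k_j$ picks up the factor $(k_j/\xi_*\omega)^{q-p+1}\gg 1$.

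Your fallback does not apply either:
\begin{itemize}
\item Lemma \ref{lem2.6} requires $\lambda\omega\le \pm\mu^{\pm}$, i.e.\ the solution bounded away from zero, which is the opposite of \eqref{eq1.12}.
\item It is stated for $p<2$, whereas Theorem \ref{th1.4} is used for $p>2$ in Section 4.
\item It expands positivity forward from one time slice; it does not fill a backward cylinder.
\end{itemize}
Nor can you quote expansion of positivity for nonnegative solutions. The function $w=-u$ is only $\ge-\epsilon_*\omega$. The function $v=\mu^+-u\ge 0$ solves an equation whose time nonlinearity $-|\mu^+-v|^{q-1}(\mu^+-v)$ degenerates at $v=\mu^+$, not at $0$.

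A smaller point: for $q>1$, bounding the time term by $\omega^{q-1}k^2$ is too crude when $p>2$. Measured against $k^p/r^p$ it carries the factor $(\omega/k)^{p-2}/b$, which blows up as $k\to 0$. Use instead $|u|\le 2k$ on the support of $(u+k)_+$, which gives the bound $\lesssim k^{q+1}$.

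\paragraph{The missing idea is smallness on \emph{every} time slice.}
The paper obtains this from the slice-wise hypothesis by a logarithmic-type estimate, in three steps.
\begin{itemize}
\item Test with $\zeta^p/[k_j-(u\pm k_j)_\pm+k_{j+1}]^{p-1}$, where $k_j=\epsilon^j\xi_0\omega$. Applying the Poincar\'e inequality on each slice then gives the differential inequality \eqref{eq7.6} of Lemma \ref{lem7.1}.
\item Split according to the sign of the time derivative of the weighted integral $I$ at a near-maximizing time $t_\delta$. This uses the auxiliary time $t_*$ and the temporary regularity \eqref{eq7.4}. For $y_j=\sup_t |B_r|^{-1}\int_{\{\mp u\le k_j\}}\zeta^p\,dx$ one obtains either $y_{j+1}\le\tilde\nu$ or $y_{j+1}\le(1-\frac12\epsilon^{q-p+1})\,y_j$.
\item The wrong-sign region $0<\pm u\le \pm\mu^{\pm}$ produces only errors controlled through \eqref{eq2.4}, once $\epsilon_*$ is small compared with $\epsilon^{j_*+1}\xi_0$.
\end{itemize}
Because the smallness then holds for each $t$, Lemma \ref{lem2.5} can be applied on subcylinders of intrinsic length $(\xi_*\omega)^{q-p+1}r^p$ placed anywhere in the time interval. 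The required $\nu$ then depends only on the data, and the circularity disappears.
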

A similar result for non-negative super-solutions of   singular parabolic equations has been established in \cite{DiB3, Bog3, Bog4} as a key tool to study Harnack's inequality (in this case $\mu^-=0$ and condition $(1.12)_-$ is always satisfied). Using it to handle the H\"{o}lder regularity seems new in the doubly nonlinear and singular/ degenerate setting. 

 The rest of the paper contains the proof of the above theorems. In Section $2$ we collect some auxiliary
 propositions and required integral and point-wise estimates of solutions.  In Sections $3$, $4$ we give a proof of Proposition \ref{pr1.1}
 which is based on the integral Harnack type inequalities for sign-changing solutions. Section $5$ contains the proof of H\"{o}lder continuity. Various forms of the integral  Harnack type inequality, Theorems \ref{th1.2} and \ref{th1.3} are proved in Appendix A.
Finally, expansion of positivity for sign solutions, Theorem \ref{th1.4} we prove in Appendix B.

\section{ Preliminaries}

\subsection{Notations}
First, we will provide some notations that will be used in the sequel, more precisely,
for fixed $r, \eta >0$ we define the following  cylinders 
$$Q_{r, \eta}(y, s):=B_r(y)\times (s-\eta, s).$$
Integral averages are marked as usual
$$\fint\limits_E u\,dx:=\frac{1}{|E|}\int\limits_E u\,dx.$$
For any $a \in \mathbb{R}$  we also use the notation $a_{\pm}:= \max(\pm a, 0)$.  

\subsection{Notion of Solution}

A function
\begin{equation*}
u\in C_{loc}(0, T; L^{q+1}_{loc}(\Omega))\cap L^p_{loc}(0, T; W^{1, p}_{loc}(\Omega))
\end{equation*}
 is a local, weak sub(super)-solution to \eqref{eq1.1}, \eqref{eq1.2}, if for every
 compact set $E\subset \Omega$ and every sub-interval $[t_1,  t_2] \subset (0, T]$
\begin{equation}\label{eq2.1}
\int\limits_{E}|u|^{q-1} u\,\zeta\,dx\Big|^{t_2}_{t_1}+\int\limits_{t_1}^{t_2}\int\limits_E\big\{-|u|^{q-1} u \zeta_t+ \mathbf{A}(x, \tau, u, D u)\big\}\,dx\,d\tau \leqslant (\geqslant ) 0,
\end{equation}
 for all non-negative test functions
 $$\zeta \in W^{1, q+1}_{loc}(0, T; L^{q+1}(E))\cap L^p_{loc}(0, T; W^{1, p}_0(E)).$$
A function $u$  that is both is a local weak sub-solution and a local weak super-solution to \eqref{eq1.1}, \eqref{eq1.2} is  a local weak solution.

\subsection{Mollification in Time}
The time derivative of a weak solution exists in the sense of distribution only.   In 
order to overcome the lack of regularity in the time variable, we define the following 
mollification in time:
$$\llbracket v \rrbracket_h(x, t):=\frac{1}{h}\int\limits_0^t e^{\frac{s-t}{h}}\,v(x, s)\,ds,\quad v\in L^1(\Omega_T).$$
We refer the reader to \cite{Kin} for various properties of this mollification.

\subsection{Algebraic Lemmas}
The following lemma can be extracted from \cite[Lemma 2.1]{Bog1}
\begin{lemma}\label{lem2.1}
There exists a constant $c = c(q)>0$ such that, for all $q>0$, 
 $a, b\in \mathbb{R}$, the following inequality holds true:
 \begin{equation*}
 \frac{1}{c}\,\big(|a|+|b|\big)^{q-1}|a-b|\leqslant ||a|^{q-1}a -|b|^{q-1} b|\leqslant c\,\big(|a|+|b|\big)^{q-1}|a-b|.
 \end{equation*}
\end{lemma}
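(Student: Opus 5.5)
This is a two-sided algebraic inequality for $F(s)=|s|^{q-1}s$. I would argue by symmetry and homogeneity, and then reduce to a one-variable compactness argument.

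\emph{Reductions.} The inequality is trivial if $a=b$, so assume $a\neq b$. Both sides are symmetric in $(a,b)$ and invariant under $(a,b)\mapsto(-a,-b)$. We may therefore assume $|a|\geqslant|b|$ and $a>0$. Both sides are also homogeneous of degree $q$ under $(a,b)\mapsto(\lambda a,\lambda b)$ with $\lambda>0$. Dividing by $a^q$ reduces the claim to $a=1$ and $b=\theta\in[-1,1)$. We then must show that
\[
g(\theta):=\frac{1-|\theta|^{q-1}\theta}{(1+|\theta|)^{q-1}(1-\theta)}
\]
is bounded above and below by positive constants depending only on $q$ on $[-1,1)$.

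\emph{Continuity and positivity.} Since $F$ is strictly increasing, the numerator is positive for $\theta<1$, so $g$ is continuous and positive on $[-1,1)$. The factor $(1+|\theta|)^{q-1}$ lies between $\min(1,2^{q-1})$ and $\max(1,2^{q-1})$, so it causes no trouble. It remains only to check the limit as $\theta\to1^-$.

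\emph{The endpoint $\theta\to1^-$.} Near $\theta=1$ we have $\theta>0$, and $(1-\theta^q)/(1-\theta)\to q$ by the derivative of $\theta^q$ at $1$. Hence $g(\theta)\to q\,2^{1-q}>0$. Thus $g$ extends continuously to the compact interval $[-1,1]$ and is strictly positive there. So $c^{-1}\leqslant g\leqslant c$ with $c=c(q)$, namely $c=\max(\max g,1/\min g)$.

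The only delicate point is this endpoint limit. If explicit constants are wanted, one can instead use the mean value theorem: $F(1)-F(\theta)=q|\xi|^{q-1}(1-\theta)$ for some $\xi$ between $\theta$ and $1$. When $\theta\geqslant0$ we have $\xi\in[\theta,1]$, which gives comparability with $(1+\theta)^{q-1}$. When $\theta<0$, the bounds $1\leqslant 1-\theta\leqslant 2$ and $1\leqslant 1+|\theta|^q\leqslant 2$ give the bound directly.
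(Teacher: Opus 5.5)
Your argument is correct. The paper gives no proof of Lemma~\ref{lem2.1}; it only defers to \cite[Lemma~2.1]{Bog1}, so there is no internal argument to match.

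The argument usually given for this inequality proceeds differently. It writes $|a|^{q-1}a-|b|^{q-1}b=q\,(a-b)\int_0^1|b+s(a-b)|^{q-1}\,ds$ and then uses the classical two-sided bound $\int_0^1|b+s(a-b)|^{q-1}\,ds\simeq(|a|+|b|)^{q-1}$. That bound holds because $q-1>-1$, and this route produces explicit constants. Your approach instead uses symmetry, oddness and $q$-homogeneity to reduce to the single function $g$ on $[-1,1)$, and then concludes from continuity, positivity and the endpoint limit $g(\theta)\to q\,2^{1-q}$. This is more elementary and fully self-contained, at the price of a non-explicit $c(q)$.

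The only imprecision is in your optional mean-value aside. For $\theta\in[0,1)$ you only know $\xi\in(\theta,1)$, so $\xi^{q-1}$ is not uniformly comparable to $1$ as $\theta\to0^+$; a priori it lies anywhere between $\theta^{q-1}$ and $1$. To make that version work, split the range. For $\theta\in[\tfrac12,1)$ you have $\xi\in[\tfrac12,1]$. For $\theta\in[0,\tfrac12)$, the bounds $1-\theta\in(\tfrac12,1]$ and $1-\theta^q\in(1-2^{-q},1]$ control $g$ directly, just as in your case $\theta<0$. The compactness proof itself needs no repair.
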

In what follows we will use the following evident inequalities
\begin{lemma}\label{lem2.2}
For any $\epsilon$, $\sigma \in (0, 1)$, $\bar{\sigma}\in (0, \epsilon)$, $0<p-1<q$  there hold
\begin{equation}\label{eq2.2}
\int\limits^\sigma_0\frac{z^{q-1}\,dz}{[z+\epsilon]^{p-1}}=\sigma^q\,\int\limits^1_0\frac{z^{q-1}\,dz}{[\sigma\,z+\epsilon]^{p-1}}
\geqslant \sigma^q\,\int\limits^1_0\frac{z^{q-1}\,dz}{[z+\epsilon]^{p-1}},
\end{equation}
\begin{equation}\label{eq2.3}
\int\limits^\epsilon_0\frac{z^{q-1}\,dz}{[z+\epsilon]^{p-1}}=\epsilon^{q-p+1}\,\int\limits^1_0\frac{z^{q-1}\,dz}{[z+1]^{p-1}}
\leqslant \epsilon^{q-p+1}\,\int\limits^1_0\frac{z^{q-1}\,dz}{[z+\epsilon]^{p-1}},
\end{equation}
\begin{equation}\label{eq2.4}
\int\limits^{\bar{\sigma}}_0\frac{z^{q-1}\,dz}{[\epsilon-z]^{p-1}}\leqslant \bar{\sigma}^q\Big(\frac{1}{\epsilon-\bar{\sigma}}\Big)^{p-1}\int\limits
^1_0 z^{q-1}\,dz\leqslant
\bar{\sigma}^q\Big(\frac{1+\epsilon}{\epsilon-\bar{\sigma}}\Big)^{p-1}\int\limits
^1_0\frac{z^{q-1}\,dz}{[z+\epsilon]^{p-1}}.
\end{equation}
\end{lemma}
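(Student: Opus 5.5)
Each of the three inequalities in Lemma \ref{lem2.2} comes from a linear change of variables followed by a pointwise comparison of the integrands on $[0,1]$. Throughout, $q>0$ and $p>1$, so that $z^{q-1}$ is integrable near $0$ and $t\mapsto t^{p-1}$ is increasing on $(0,\infty)$.

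For \eqref{eq2.2}, I substitute $z=\sigma w$, $dz=\sigma\,dw$. Then
$$\int_0^\sigma \frac{z^{q-1}dz}{[z+\epsilon]^{p-1}}=\sigma^q\int_0^1\frac{w^{q-1}dw}{[\sigma w+\epsilon]^{p-1}},$$
which is the stated identity. Since $\sigma<1$, we have $\sigma w+\epsilon\leqslant w+\epsilon$. Because $p-1>0$, this gives $[\sigma w+\epsilon]^{-(p-1)}\geqslant [w+\epsilon]^{-(p-1)}$, and integrating against $w^{q-1}$ yields the inequality.

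For \eqref{eq2.3}, I substitute $z=\epsilon w$. Then $z^{q-1}dz=\epsilon^q w^{q-1}dw$ and $[z+\epsilon]^{p-1}=\epsilon^{p-1}[w+1]^{p-1}$, which gives the identity with the factor $\epsilon^{q-p+1}$. For the inequality, $\epsilon<1$ gives $w+\epsilon\leqslant w+1$, hence $[w+1]^{-(p-1)}\leqslant [w+\epsilon]^{-(p-1)}$.

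For \eqref{eq2.4}, the first step is that on $[0,\bar\sigma]$ we have $\epsilon-z\geqslant \epsilon-\bar\sigma>0$. So the denominator is bounded below by $(\epsilon-\bar\sigma)^{p-1}$, and
$$\int_0^{\bar\sigma}\frac{z^{q-1}dz}{[\epsilon-z]^{p-1}}\leqslant (\epsilon-\bar\sigma)^{1-p}\int_0^{\bar\sigma}z^{q-1}dz=(\epsilon-\bar\sigma)^{1-p}\,\bar\sigma^q\int_0^1 z^{q-1}dz,$$
using $z=\bar\sigma w$ in the last step. For the second inequality, for $w\in[0,1]$ we have $w+\epsilon\leqslant 1+\epsilon$, so $1\leqslant (1+\epsilon)^{p-1}[w+\epsilon]^{-(p-1)}$. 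Multiplying by $w^{q-1}$ and integrating gives
$$\int_0^1 w^{q-1}dw\leqslant (1+\epsilon)^{p-1}\int_0^1\frac{w^{q-1}dw}{[w+\epsilon]^{p-1}},$$
and combining the two estimates gives \eqref{eq2.4}.

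The only conditions these steps use are $\sigma,\epsilon<1$, $\bar\sigma<\epsilon$, $q>0$ and $p>1$. The hypothesis $q>p-1$ is not needed for the proof itself; it guarantees finiteness of the integrals against $[z+1]^{p-1}$ only where it matters later in the paper. There is no real obstacle here; the only thing to watch is the direction of each monotonicity, which is fixed by the sign of $p-1$.
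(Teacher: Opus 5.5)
Your proof is correct and matches the paper. The paper gives no argument at all, calling these inequalities ``evident''; its displayed identities are exactly your rescalings $z=\sigma w$, $z=\epsilon w$, $z=\bar\sigma w$, combined with the monotonicity of $t\mapsto t^{1-p}$ for $p>1$.

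One small correction to your closing remark. The integral $\int_0^1 z^{q-1}[z+1]^{1-p}\,dz$ is finite for every $q>0$, so $q>p-1$ plays no role there. It is needed later, e.g.\ for the integrability of $(\epsilon^j\xi_0\omega-z)^{q-p}$ in the proof of Lemma~\ref{lem7.1} and for the smallness of $\epsilon^{q-p+1}$ in \eqref{eq7.13} and \eqref{eq7.17}.
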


\subsection{Local energy estimates}
The following lemma can be found, for example  in \cite{Bog1}

\begin{lemma}\label{lem2.3}
Let $u$ be a local weak sub (super)-solution to \eqref{eq1.1}, \eqref{eq1.2} in $\Omega_T$. Then there exists $\gamma>0$ depending only on the data such that for every cylinder $Q_{r, \theta}(y, s) \subset Q_{8r, 8\theta}(y, s)\subset \Omega_T$, every
$k\in \mathbb{R}$ and any smooth $\zeta(x, t)$ which is zero for $(x, t) \in \partial B_r(y)\times (s-\theta, s)$ one has
\begin{multline}\label{eq2.5}
\sup\limits_{s-\theta\leqslant t\leqslant s}\int\limits_{B_r(y)\times\{t\}}g_{\pm}(u, k)\,\zeta^p\,dx+
\iint\limits_{Q_{r,\theta}(y, s)}|D(u-k)_{\pm}|^p\,\zeta^p\,dx dt\leqslant\\\leqslant \int\limits_{B_r(y)\times\{s-\theta\}}g_{\pm}(u, k)\,\zeta^p\,dx+\gamma\iint\limits_{Q_{r, \theta}(y, s)}\Big[g_{\pm}(u, k)\zeta^{p-1}\,|\zeta_t|+(u-k)_{\pm}^p\,|D \zeta|^p\Big]\,dx\,dt.
\end{multline}
Here
$$g_{\pm}(u, k):=q\,\int\limits_0^{(u-k)_{\pm}}|z \pm k|^{q-1}\,z\,dz.$$
\end{lemma}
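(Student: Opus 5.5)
The statement immediately above is the energy estimate of Lemma~\ref{lem2.3}, and I would prove it in the standard way. Test the weak formulation \eqref{eq2.1} with $\varphi=\pm(u-k)_{\pm}\zeta^p$. Because $u_t$ exists only as a distribution, this test is carried out after passing to the time mollification $\llbracket\cdot\rrbracket_h$. The time is restricted to $(s-\theta,t)$ for arbitrary $t\in(s-\theta,s)$, and one lets $h\to0$ at the end. I treat the sub-solution with $(u-k)_+$; the super-solution with $(u-k)_-$ is symmetric.

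The first step is the parabolic term. Formally one has
\[
\partial_t\big(|u|^{q-1}u\big)(u-k)_+=\partial_t\, g_+(u,k),
\]
because $\frac{d}{dv}\int_0^{(v-k)_+}q|z+k|^{q-1}z\,dz=q|v|^{q-1}(v-k)_+$, which equals $\partial_v(|v|^{q-1}v)\,(v-k)_+$. To justify this rigorously, I would use the mollified formulation together with the convexity argument of Kinnunen--Lindqvist \cite{Kin}. The key identity is
\[
\partial_t\llbracket w\rrbracket_h=\tfrac1h\big(w-\llbracket w\rrbracket_h\big), \qquad w=|u|^{q-1}u.
\]
Since $v\mapsto(v-k)_+$ is monotone, the error term has a favorable sign, and this gives
\[
\int_{s-\theta}^{t}\!\!\int \partial_t\llbracket w\rrbracket_h\,(u-k)_+\zeta^p\;\ge\;\int_{s-\theta}^{t}\!\!\int \partial_t\, G\big(\llbracket w\rrbracket_h\big)\zeta^p+o(1),
\]
where $G$ is the primitive, in the variable $w$, of $(w^{1/q}-k)_+$. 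This is the delicate step: one has to handle the interplay between $|u|^{q-1}u$ and the truncation when $u$ changes sign, for all $q>0$. Integrating by parts in time then produces the terms
\[
\int g_+\zeta^p\Big|_{s-\theta}^{t}-\iint g_+\,p\,\zeta^{p-1}\zeta_t .
\]
The second of these is bounded by $\gamma\iint g_+\zeta^{p-1}|\zeta_t|$.

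The second step is the elliptic term. Since $D\varphi=D(u-k)_+\zeta^p+p(u-k)_+\zeta^{p-1}D\zeta$, the first condition in \eqref{eq1.2} gives the lower bound $K_1\iint|D(u-k)_+|^p\zeta^p$. For the second piece, the bound $|\mathbf A|\le K_2|D(u-k)_+|^{p-1}$ holds on the support of $(u-k)_+$, and Young's inequality gives
\[
pK_2|D(u-k)_+|^{p-1}\zeta^{p-1}(u-k)_+|D\zeta|\le \tfrac{K_1}{2}|D(u-k)_+|^p\zeta^p+\gamma(u-k)_+^p|D\zeta|^p .
\]
The first term on the right is absorbed into the left-hand side.

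Finally, combining the two steps and taking the supremum over $t$ yields \eqref{eq2.5}. The gradient integral is included over the whole interval by choosing $t=s$. The only genuine obstacle is the rigorous time regularization in the first step. I would follow \cite{Bog1} there, and the algebraic comparison of Lemma~\ref{lem2.1} shows that $g_\pm$ is well defined and comparable to $(|u|+|k|)^{q-1}(u-k)_\pm^2$.
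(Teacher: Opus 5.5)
Your argument is the standard proof of this energy estimate, and it is what the paper relies on: the paper gives no proof of its own and refers to \cite{Bog1}. The steps are the same as there. You test the time-mollified formulation with $\pm(u-k)_\pm\zeta^p$. You then use the monotonicity of $\sigma\mapsto(|\sigma|^{\frac1q-1}\sigma-k)_\pm$ to replace $\partial_t\llbracket w\rrbracket_h\,(u-k)_\pm$ by $\partial_t G(\llbracket w\rrbracket_h)$, and absorb the $D\zeta$ term by Young's inequality.

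The step you flag as delicate is in fact automatic, since that monotonicity holds for every $q>0$ whatever the sign of $u$. You should, however, write $|w|^{\frac1q-1}w$ rather than $w^{1/q}$, and normalize $G(|k|^{q-1}k)=0$ so that $G(|u|^{q-1}u)=g_+(u,k)$.

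The one thing to correct is your last step. For each $t$ the argument gives
\[
\int_{B_r(y)\times\{t\}}g_\pm(u,k)\,\zeta^p\,dx+\frac{K_1}{2}\int_{s-\theta}^{t}\!\!\int_{B_r(y)}|D(u-k)_\pm|^p\zeta^p\,dx\,d\tau\leqslant I_0+\gamma R .
\]
Here $I_0$ is the integral at $t=s-\theta$ and $R$ is the double integral on the right of \eqref{eq2.5}. Taking the supremum in the first term, setting $t=s$ in the second, and adding does not give \eqref{eq2.5}. It gives \eqref{eq2.5} with $I_0$ multiplied by a constant such as $1+2/K_1$, not by $1$.

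This cannot be avoided. The supremum includes $t=s-\theta$, so \eqref{eq2.5} as printed would force $\iint|D(u-k)_\pm|^p\zeta^p\leqslant\gamma R$. That fails whenever $R=0$ while $(u-k)_\pm$ is nonconstant. For example, take $(u-k)_\pm(\cdot,t)$ supported in $B_{r/2}(y)$ for all $t$, such as a heat-kernel bump with $k$ above its values outside $B_{r/2}(y)$, and take $\zeta$ independent of $t$ with $\zeta\equiv1$ on $B_{r/2}(y)$.

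So the lemma should carry a constant in front of the initial-time integral, and you should claim only that weaker form. This is harmless for the paper, which applies the estimate (in Lemmas \ref{lem2.4}, \ref{lem2.5}) only with cutoffs vanishing at the initial time, so that $I_0=0$.
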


\subsection{A De Giorgi Type Lemma}
Construct the cylinder $Q_{r, \theta}(y, s) \subset Q_{8r, 8\theta}(y, s)\subset \Omega_T$ and let $\mu^+$, $\mu^-$, $\omega$ be the numbers such that
$$\mu^+\geqslant \sup\limits_{Q_{r, \theta}(y, s)} u,\quad \mu^-\leqslant \inf\limits_{Q_{r, \theta}(y, s)} u,\quad \mu^+-\mu^-\leqslant \omega.$$
 The proof of the following lemma is almost standard and uses Lemma \ref{lem2.3} and Sobolev embedding theorem (see, e. g. \cite[Lemma 6.1]{Lia1}).
 
\begin{lemma}\label{lem2.4}
Let $u$ be a locally bounded, local, weak sub (super)-solution to \eqref{eq1.1}, \eqref{eq1.2} in $\Omega_T$ and let $q>0$, $p>1$.
Fix $\xi\in (0, 1)$, there exists $\nu\in (0, 1)$ depending only on the data and $\xi$, $\omega$,  $r$, $\theta$ such that if
\begin{equation}\label{eq2.6}
\big|Q_{r, \theta}(y, s)\cap \big\{\pm(\mu^{\pm}-u)\leqslant \xi\,\omega\big\}\big|\leqslant \nu |Q_{r, \theta}(y, s)|,
\end{equation}
then either
\begin{equation}\label{eq2.7}
|\mu^{\pm}| >8\,\xi\,\omega
\end{equation}
or
\begin{equation}\label{eq2.8}
\pm(\mu^{\pm}-u)\geqslant \frac{1}{2}\,\xi\,\omega,\quad (x, t)\in Q_{\frac{1}{2}r, \frac{1}{2}\theta}(y, s).
\end{equation}
\end{lemma}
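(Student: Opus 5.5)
We prove Lemma \ref{lem2.4} by the standard De Giorgi iteration, built on the energy estimate of Lemma \ref{lem2.3}. We treat the ``$+$'' case (sub-solution, truncations $(u-k)_+$); the ``$-$'' case is symmetric.

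\textbf{Levels and cylinders.} Assume \eqref{eq2.7} fails, i.e. $|\mu^+|\leqslant 8\xi\omega$. Set
$$k_n:=\mu^+-\frac{\xi\omega}{2}-\frac{\xi\omega}{2^{n+1}},\qquad r_n:=\frac r2+\frac r{2^{n+1}},\qquad \theta_n:=\frac\theta2+\frac\theta{2^{n+1}},$$
and $Q_n:=Q_{r_n,\theta_n}(y,s)$. Choose cutoffs $\zeta_n$ equal to $1$ on $Q_{n+1}$ and vanishing on the parabolic boundary of $Q_n$, with
$$|D\zeta_n|\leqslant \frac{\gamma 2^n}{r},\qquad |\partial_t\zeta_n|\leqslant \frac{\gamma 2^n}{\theta}.$$
Define $Y_n:=|Q_n\cap\{u>k_n\}|/|Q_n|$. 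Hypothesis \eqref{eq2.6} gives $Y_0\leqslant\nu$.

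\textbf{Energy step.} Apply \eqref{eq2.5} with $k=k_n$. The key point is to bound $g_+(u,k_n)$ from above and below by powers of $(u-k_n)_+$ with controlled constants. Since $|\mu^+|\leqslant 8\xi\omega$ and $k_n\leqslant u\leqslant\mu^+$, the weight $|z+k_n|$ is at most $\gamma\xi\omega$ on the relevant range, so that
$$g_+(u,k_n)\leqslant \gamma(\xi\omega)^{q-1}(u-k_n)_+^2.$$
When $q<1$ this upper bound uses only $|z+k_n|^{q-1}\leqslant$ integrable terms; more carefully,
$$g_+\leqslant q\int_0^{(u-k_n)_+}|z+k_n|^{q-1}\,z\,dz\leqslant \gamma(\xi\omega)^{q+1}\chi_{\{u>k_n\}}$$
by integrating the singularity. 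For the lower bound, work on the set $\{u>k_{n+1}\}$, where $(u-k_n)_+\geqslant \xi\omega 2^{-n-2}$. By Lemma \ref{lem2.1} one has
$$g_+(u,k_n)\asymp \bigl(|u|+|k_n|\bigr)^{q-1}(u-k_n)_+^2,$$
and with $|u|+|k_n|\leqslant\gamma\xi\omega$ this yields, for $q<1$,
$$g_+\geqslant \gamma^{-1}(\xi\omega)^{q-1}(u-k_n)_+^2.$$
For $q\geqslant 1$ the lower bound needs care, since $|u|+|k_n|$ may be small; there one instead uses
$$g_+\geqslant \gamma^{-1}\bigl((u-k_n)_+\bigr)^{q+1}$$
and truncates at the level $k_{n+1}$ via the doubling $(u-k_n)_+\geqslant 2(u-k_{n+1})_+$-type trick, which gives a lower bound $\gamma^{-1}2^{-\gamma n}(\xi\omega)^{q-1}(u-k_{n+1})_+^2$. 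Altogether this gives
$$\sup_t\int (u-k_{n+1})_+^2\zeta^p+\frac{1}{(\xi\omega)^{q-1}}\iint|D(u-k_{n+1})_+|^p\zeta^p\leqslant \gamma 2^{\gamma n}(\xi\omega)^2\Big(\frac1\theta+\frac{(\xi\omega)^{p-2-(q-1)}}{r^p}\Big)|Q_n\cap\{u>k_n\}|.$$

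\textbf{Sobolev step and conclusion.} Apply the parabolic embedding
$$\iint|v\zeta|^{p\frac{N+2}{N}}\leqslant\gamma\iint|D(v\zeta)|^p\Big(\sup_t\int|v\zeta|^2\Big)^{p/N},$$
with $v=(u-k_{n+1})_+$, and bound the left side from below using $v\geqslant \xi\omega2^{-n-3}$ on $\{u>k_{n+2}\}$. This yields
$$Y_{n+1}\leqslant \gamma\, b^n\, C(\xi,\omega,r,\theta)\,Y_n^{1+p/N},$$
where $C(\xi,\omega,r,\theta)$ is a power of $\theta(\xi\omega)^{p-1-q}/r^p$ and its reciprocal. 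The fast geometric convergence lemma then gives $Y_n\to0$ provided
$$\nu\leqslant \gamma^{-1}C^{-N/p}.$$
This is why $\nu$ depends on $\xi,\omega,r,\theta$, as the statement allows. It follows that $u\leqslant \mu^+-\tfrac12\xi\omega$ on $Q_{r/2,\theta/2}$, which is \eqref{eq2.8}.

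\textbf{Main obstacle.} The delicate point is the two-sided comparison of $g_\pm$ with quadratic (or $(q+1)$-power) truncations, uniformly in $n$, when $u$ or $k_n$ may cross zero. This is exactly where the restriction $|\mu^\pm|\leqslant 8\xi\omega$ enters: it keeps $|u|+|k_n|$ comparable to $\xi\omega$ from above, while for the lower bound one passes to the next level to avoid degeneration near zero.
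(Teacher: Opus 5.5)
Your argument is correct and is essentially the route the paper takes: it gives no proof of Lemma \ref{lem2.4}, but its sketch for Lemma \ref{lem2.5} follows the same De Giorgi iteration (energy estimate of Lemma \ref{lem2.3}, two-sided bounds on $g_\pm$ using $|\mu^{\pm}|\leqslant 8\xi\omega$, parabolic Sobolev embedding, fast geometric convergence), and your smallness condition on $\nu$ reproduces \eqref{eq2.12}. Only bookkeeping needs tidying: the Sobolev step controls $|Q_{n+1}\cap\{u>k_{n+2}\}|$, so the recursion actually links $Y_{n+2}$ to $Y_n$ (or insert intermediate levels), and for $q\geqslant 1$ the fact you need is $(u-k_n)_+\geqslant\max\{(u-k_{n+1})_+,\,k_{n+1}-k_n\}$ on $\{u>k_{n+1}\}$, not a literal doubling inequality.
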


The proof of the next lemma is completely similar to that of Lemma \ref{lem2.4} and  we give a short sketch here.

\begin{lemma}\label{lem2.5}
Let $u$ be a locally bounded, local, weak sub (super)-solution to \eqref{eq1.1}, \eqref{eq1.2} in $\Omega_T$ and let $q>0$, $p>1$.
Fix $\xi\in (0, 1)$, there exists $\nu\in (0, 1)$ depending only on the data and $\xi$, $\omega$,  $r$, $\theta$ such that if
\begin{equation}\label{eq2.9}
\big|Q_{r, \theta}(y, s)\cap \big\{\mp u\leqslant \xi\,\omega\big\}\big|\leqslant \nu |Q_{r, \theta}(y, s)|,
\end{equation}
then either
\begin{equation}\label{eq2.10}
\pm \mu^{\pm} >8\,\xi\,\omega
\end{equation}
or
\begin{equation}\label{eq2.11}
\mp u\geqslant \frac{1}{2}\,\xi\,\omega,\quad (x, t)\in Q_{\frac{1}{2}r, \frac{1}{2}\theta}(y, s).
\end{equation}
\end{lemma}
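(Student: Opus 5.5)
We follow the De Giorgi iteration behind Lemma \ref{lem2.4}, now with levels measured from zero rather than from $\mu^\pm$. By symmetry we only treat the upper signs. Then hypothesis \eqref{eq2.9} reads $|Q_{r,\theta}\cap\{-u\leqslant\xi\omega\}|\leqslant\nu|Q_{r,\theta}|$, and the conclusion we want is $u\leqslant-\frac12\xi\omega$ in the half cylinder.

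Assume \eqref{eq2.10} fails, i.e. $\mu^+\leqslant 8\xi\omega$. The function $v=-u$ is a sub(super)-solution of an equation of the same structure, so Lemma \ref{lem2.3} applies to the truncations $(u-k_n)_+$.
\begin{itemize}
\item The levels are $k_n=-\frac{\xi\omega}{2}-\frac{\xi\omega}{2^{n+1}}$. They decrease from $k_0=-\xi\omega$ to $-\frac12\xi\omega$, so all of them are negative.
\item The radii are $r_n=\frac r2+\frac r{2^{n+1}}$ and the times $\theta_n=\frac\theta2+\frac\theta{2^{n+1}}$, with cylinders $Q_n=Q_{r_n,\theta_n}(y,s)$.
\item The cutoffs $\zeta_n$ satisfy $|D\zeta_n|\leqslant\gamma 2^n/r$ and $|\partial_t\zeta_n|\leqslant\gamma2^n/\theta$.
\item We set $A_n=Q_n\cap\{u>k_n\}$ and $Y_n=|A_n|/|Q_n|$. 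By \eqref{eq2.9}, $Y_0\leqslant\nu$.
\end{itemize}

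The core step is a two-sided comparison of $g_+(u,k_n)$ with $(u-k_n)_+^2$ on $A_n$. On $A_n$ we have $k_n<u\leqslant\mu^+\leqslant8\xi\omega$, and also $|k_n|\in[\frac12\xi\omega,\xi\omega]$. Hence $(u-k_n)_+\leqslant 9\xi\omega$ and the integrand weight $|z+k_n|$ stays at most $\gamma\xi\omega$, which gives the upper bound $g_+\leqslant\gamma(\xi\omega)^{q-1}(u-k_n)_+^2$ when $q\geqslant1$.

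The lower bound is the delicate case, because $|z+k_n|$ can vanish when $u$ crosses zero. For $q\geqslant 1$ we restrict to $z\leqslant\frac14\xi\omega$, where $|z+k_n|\geqslant\frac14\xi\omega$. This gives $g_+\geqslant c(\xi\omega)^{q-1}\min\{(u-k_n)_+,\frac14\xi\omega\}^2$. For $q<1$ the two bounds swap roles, and the smallness of $|z+k_n|$ only helps the lower bound. In both cases $g_+\approx(\xi\omega)^{q-1}(u-k_n)_+^2$ on the set where $(u-k_{n+1})_+\leqslant\frac{\xi\omega}{2^{n+3}}$. This restriction suffices for the embedding step, after replacing $(u-k_{n+1})_+$ by $\min\{(u-k_{n+1})_+,\frac{\xi\omega}{2^{n+3}}\}$. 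The alternative \eqref{eq2.10} is exactly what keeps $u-k_n$ of order $\xi\omega$. I expect this calibration to be the main obstacle.

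With these bounds, Lemma \ref{lem2.3} yields
\[
\sup_t\int (\xi\omega)^{q-1}(u-k_n)_+^2\zeta^p+\iint|D(u-k_n)_+|^p\zeta^p\leqslant\gamma 2^{\gamma n}\Big(\frac{(\xi\omega)^{q+1}}{\theta}+\frac{(\xi\omega)^p}{r^p}\Big)|A_n|.
\]
We then apply the parabolic Sobolev embedding (DiBenedetto's $V^{m,p}$ embedding with the mixed $L^2$–$L^p$ norms). From below we use $(u-k_n)_+\geqslant\xi\omega 2^{-n-2}$ on $A_{n+1}$. Together these give
\[
Y_{n+1}\leqslant\gamma b^{n}\,C(\xi,\omega,r,\theta)\,Y_n^{1+\kappa},
\]
with $\kappa=\frac{p}{N+p}$ and $C$ depending on the ratio $\theta(\xi\omega)^{p-q-1}/r^p$. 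Choosing $\nu=\gamma^{-1/\kappa}C^{-1/\kappa}b^{-1/\kappa^2}$, the standard fast geometric convergence lemma gives $Y_n\to0$. Therefore $u\leqslant-\frac12\xi\omega$ on $Q_{\frac12r,\frac12\theta}$, which is \eqref{eq2.11}. The lower signs follow by applying the same argument to $-u$.
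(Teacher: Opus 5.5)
Your proposal is correct and follows essentially the paper's route. The paper also runs a De Giorgi iteration on $(u-k)_+$ with negative levels between $-\xi\omega$ and $-\frac12\xi\omega$, compares $g_+(u,k)$ two-sidedly with $(\xi\omega)^{q-1}(u-k)_+^2$ (possible because the alternative $\mu^+\leqslant 8\xi\omega$ fails), and finishes with the $L^\infty(L^2)\cap L^p(W^{1,p})$ parabolic embedding and fast geometric convergence, so that $\nu$ depends on $\theta/((\xi\omega)^{q-p+1}r^p)$ as in \eqref{eq2.12}. Two small corrections. First, your restriction/truncation step is unnecessary: $(u-k_n)_+\leqslant 9\xi\omega$ on $A_n$ already gives $\min\{(u-k_n)_+,\frac14\xi\omega\}\geqslant\frac1{36}(u-k_n)_+$, so the lower bound holds on all of $A_n$ for every $q>0$ (the paper gets this uniform bound directly from Lemma~\ref{lem2.1}). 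Second, with $L^2$ control in space the recursion exponent is $1+\frac{p}{N+2}$, not $1+\frac{p}{N+p}$; this only changes the formula for $\nu$.
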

\begin{proof}
Fix $\sigma \in (0, 1)$, let $\rho$, $\eta$ be the numbers such that $\frac{1}{2} r\leqslant \rho(1-\sigma)\leqslant \rho \leqslant r$,
$\frac{1}{2}\theta\leqslant \eta(1-\sigma) \leqslant \eta \leqslant \theta$ and $\zeta_1(x)\in C^1_0(B_\rho(y))$, $\zeta_1(x)=1$ in $B_{\rho(1-\sigma)}(y)$, $0\leqslant \zeta_1(x)\leqslant 1$, $|D \zeta_1(x)|\leqslant \frac{1}{\sigma\,\rho}$, $\zeta_2(t)\in C^1(\mathbb{R})$,
$0\leqslant \zeta_2(t)\leqslant 1$, $\zeta_2(t)=0$ for $t\leqslant s-\eta$, $\zeta_2(t)=1$ for $t\geqslant s-(1-\sigma)\eta$, $|\zeta'_2(t)|
\leqslant \dfrac{1}{\sigma\eta}$ and let $\zeta(x, t)=\zeta_1(x) \zeta_2(t)$. Use inequality \eqref{eq2.5} of Lemma \ref{lem2.3} for the function $(u\pm k)_{\pm}$, with
$\frac{1}{2}\xi\,\omega\leqslant k(1-\sigma)\leqslant k\leqslant  \xi\omega$, we obtain
\begin{multline*}
\sup\limits_{s-\eta\leqslant t\leqslant s}\int\limits_{B_\rho(y)\times\{t\}}\tilde{g}_{\pm}(u, k)\,\zeta^p\,dx+
\iint\limits_{Q_{\rho,\eta}(y, s)}|D(u\pm k)_{\pm}|^p\,\zeta^p\,dx dt\leqslant\\\leqslant \gamma\iint\limits_{Q_{\rho, \eta}(y, s)}\Big[\tilde{g}_{\pm}(u, k)\zeta^{p-1}\,|\zeta_t|+(u\pm k)_{\pm}^p\,|D \zeta|^p\Big]\,dx\,dt,
\end{multline*}
where $\tilde{g}_{\pm}(u, k)=q\,\int\limits^{(u\pm k)_{\pm}}_0 |z - k|^{q-1}\,z\,dz$.  Further we assume that \eqref{eq2.10} is violated, i.e. $$\pm\mu^{\pm}\leqslant 8 \xi \omega.$$
Then 
$$(u\pm k)_{\pm}\leqslant \pm \mu^{\pm}+k\leqslant \gamma \xi\,\omega,$$
and
\begin{equation*}
\tilde{g}_{\pm}(u, k)=q\int\limits^{\pm u}_{-k}|z|^{q-1}(z+k) dz \leqslant q\,(8\xi \omega + k)\,\int\limits^{8\xi \omega}_{- k}|z|^{q-1}\,dz=(8\xi \omega + k)((8 \xi\omega)^q + k^q)\leqslant \gamma (\xi \omega)^{q+1}.
\end{equation*}
Using the algebraic lemma, Lemma \ref{lem2.1} and the facts that
$$\frac{1}{2}\xi\omega \leqslant k\leqslant \pm u\leqslant |u|+\frac{1}{2}|\pm u-k|\leqslant \pm \frac{3}{2} u\leqslant \pm  \frac{3}{2} \mu^{\pm}\leqslant 12 \xi\omega, \quad\text{if}\quad \pm u\geqslant k,$$
and
$$\frac{1}{4} \xi \omega \leqslant \frac{1}{2} k\leqslant |u|+\frac{1}{2}(k\mp u)\leqslant |u|+\frac{1}{2}|\pm u-k|\leqslant 2 k\leqslant 2 \xi \omega,\quad \text{if} \quad -k\leqslant \pm u\leqslant k,$$
we obtain  for $\pm u \geqslant -k$
\begin{multline*}
\tilde{g}_{\pm}(u, k)=q\int\limits^{\pm u}_{-k}|z|^{q-1}(z+k) dz\geqslant q\int\limits^{\pm u}_{\frac{1}{2}(\pm u-k)}|z|^{q-1}(z+k)\,dz
\geqslant \frac{q}{2}(\pm u +k)\,\int\limits^{\pm u}_{\frac{1}{2}(\pm u-k)}|z|^{q-1}\,dz=\\=
\frac{1}{2}(\pm u +k)\Big[\pm|u|^{q-1} u- \Big(\frac{1}{2}\Big)^q |\pm u-k|^{q-1}(\pm u-k)\Big]\geqslant\\\geqslant \frac{1}{4 \gamma}(\pm u +k)^2
\Big[|u|+\frac{1}{2} |\pm u-k|\Big]^{q-1}\geqslant \frac{1}{\gamma}(\xi \omega)^{q-1}(\pm u +k)^2=\frac{1}{\gamma}(\xi \omega)^{q-1}
(u\pm k)^2_{\pm}.
\end{multline*}
So, collecting the last three inequalities we arrive at
\begin{multline*}
(\xi \omega)^{q-1}\sup\limits_{s-\eta\leqslant t\leqslant s}\,\int\limits_{B_\rho(y)\times\{t\}}(u \pm k)^2_{\pm}\,\zeta^p\,dx+
\iint\limits_{Q_{\rho,\eta}(y, s)}|D(u\pm k)_{\pm}|^p\,\zeta^p\,dx dt\leqslant\\\leqslant \frac{\gamma}{(\sigma \rho)^p}(\xi \omega)^p\Big[1+\frac{(\xi \omega)^{q-p+1} \rho^p}{\eta}\Big]|Q_{\rho, \eta}(y, s)\cap\{\pm u \geqslant -k\}|.
\end{multline*}
From this, by the H\"{o}lder inequality and Sobolev embedding theorem we obtain
\begin{multline*}
(\sigma k)^p\,|Q_{\rho(1-\sigma), \eta(1-\sigma)}(y, s)\cap\{\pm u \geqslant -k (1-\sigma)\}|\leqslant\iint\limits_{Q_{\rho, \eta}(y, s)}(u\pm k)^p_{\pm}\,\zeta^{p^2}\,dx dt\leqslant\\\leqslant \gamma \Big(\sup\limits_{s-\eta\leqslant t\leqslant s}\,\int\limits_{B_\rho(y)\times\{t\}}(u \pm k)^2_{\pm}\,\zeta^{2 p}\,dx\Big)^{\frac{p}{N+2}}\Big(\iint\limits_{Q_{\rho,\eta}(y, s)}\big|D\big[(u\pm k)_{\pm}\zeta^p\big]\big|^p\,dx dt\Big)^{\frac{N}{N+2}}\times\\\times|Q_{\rho, \eta}(y, s)\cap\{\pm u \geqslant -k\}|^{\frac{2}{N+2}}\leqslant\\\leqslant\frac{\gamma}{\sigma^\gamma}(\xi \omega)^{\frac{(1-q)p}{N+2}}\bigg(\Big(\frac{\xi \omega}{\rho}\Big)^p\Big[1+\frac{(\xi \omega)^{q-p+1} \rho^p}{\eta}\Big]\bigg)^{\frac{N+p}{N+2}}
|Q_{\rho, \eta}(y, s)\cap\{\pm u \geqslant -k\}|^{1+\frac{p}{N+2}},
\end{multline*}
which yields
\begin{multline*}
\frac{|Q_{\rho(1-\sigma), \eta(1-\sigma)}(y, s)\cap\{\pm u \geqslant -k (1-\sigma)\}|}{|Q_{\rho, \eta}(y, s)|}\leqslant\\\leqslant \frac{\gamma}{\sigma^\gamma}\Big(\frac{\eta}{(\xi \omega)^{q-p+1}\rho^p}\Big)^{\frac{p}{N+2}}\Big[1+\frac{(\xi \omega)^{q-p+1} \rho^p}{\eta}\Big]^{\frac{N+p}{N+2}} \bigg(\frac{|Q_{\rho, \eta}(y, s)\cap\{\pm u \geqslant -k\}|}{|Q_{\rho, \eta}(y, s)|}
\bigg)^{1+\frac{p}{N+2}},
\end{multline*}
from this by iteration the required \eqref{eq2.11} follows, provided that
\begin{equation}\label{eq2.12}
\nu\leqslant \frac{1}{\gamma}\,\frac{(\xi \omega)^{q-p+1} r^p}{\theta}\Big[1+\frac{(\xi \omega)^{q-p+1} r^p}{\theta}\Big]^{-\frac{N+p}{p}},
\end{equation}
which completes the proof of the lemma.
\end{proof}
\begin{remark}
As it was mentioned, the number $\nu$ in \eqref{eq2.9}, as well as in \eqref{eq2.6} can be chosen to satisfy \eqref{eq2.12}.
Further we will use Lemmas \ref{lem2.4}, \ref{lem2.5} with $\theta=b\,(\xi\,\omega)^{q-p+1}\,r^p$ and some   $b\in (0, 1)$. Simple calculations give that the number
$$\nu=\frac{1}{\gamma}\,b^{\frac{N}{p}}$$
satisfies  condition \eqref{eq2.12}.
\end{remark}

\subsection{Expansion of Positivity}
The following two lemmas can be found, for example in \cite[Lemma 3.4]{Lia} and \cite[Lemma 3.3]{Bog2}

\begin{lemma}\label{lem2.6}
Let $u$ be a locally bounded, local, weak sub(super)-solution to \eqref{eq1.1}, \eqref{eq1.2} in $\Omega_T$, $q>0$, $1<p<2$ and suppose also 
that
\begin{equation}\label{eq2.13}
\lambda\,\omega \leqslant \pm \mu^{\pm}\leqslant \Lambda\,\omega,
\end{equation}
with some $0<\lambda <\Lambda$ and
\begin{equation}\label{eq2.14}
\big|B_r(y)\cap\big\{\pm(\mu^{\pm}-u(\cdot, s))\geqslant a\,\omega\big\}\big|\geqslant \alpha |B_r(y)|,
\end{equation}
with some $a<\frac{1}{2}\,\lambda$ and some $\alpha \in (0, 1)$. Then there exist constants $\eta$, $\delta\in (0, 1)$ depending
only on the data,  $\alpha$, $\lambda$, $\Lambda$ such that
\begin{equation}\label{eq2.15}
\pm(\mu^{\pm}-u)\geqslant \eta\,a\,\omega \quad \text{in}\quad  B_{2r}(y)\times (s+\,\delta\,a^{2-p}\,\omega^{q-p+1}\,r^p),
\end{equation}
provided that $B_{2r}(y)\times (s, s+ \delta\,a^{2-p}\,\omega^{q-p+1}\,r^p)\subset \Omega_T$.
 \end{lemma}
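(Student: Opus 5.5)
The plan is to reduce Lemma \ref{lem2.6} to the known expansion of positivity for the singular parabolic $p$-Laplacian ($1<p<2$). The reduction works because \eqref{eq2.13} freezes the weight $|u|^{q-1}$ at size $\omega^{q-1}$ on the relevant level sets. Set $w:=\pm(\mu^{\pm}-u)\geqslant 0$. Every truncation we use involves a level $k\leqslant a\omega<\frac12\lambda\omega$. Hence on $\{w\leqslant k\}$ we have $\pm u\geqslant \pm\mu^{\pm}-k\geqslant \frac12\lambda\omega$, and $\pm u\leqslant \Lambda\omega$ there as well. So $\lambda\omega/2\leqslant|u|\leqslant\Lambda\omega$ wherever the truncated functions $(w-k)_-$ live.

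\textbf{Step 1: energy estimates.} Apply Lemma \ref{lem2.3} with levels $k_\pm=\mu^{\pm}\mp k$, which give exactly the truncations $(w-k)_-$. By Lemma \ref{lem2.1}, the integrand $|z\pm k|^{q-1}$ in $g_\pm$ is comparable to $\omega^{q-1}$, with constants depending on $\lambda,\Lambda,q$. Therefore
$$\gamma^{-1}\omega^{q-1}(w-k)_-^2\leqslant g_\pm\leqslant\gamma\,\omega^{q-1}(w-k)_-^2 .$$
The energy inequality \eqref{eq2.5} then becomes the energy inequality of the equation $\omega^{q-1}\partial_t w-\Delta_p w\geqslant 0$ for nonnegative supersolutions, with constants depending only on the data, $\lambda$ and $\Lambda$. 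In the rescaled time $\tau=\omega^{1-q}t$ it is precisely the family of De Giorgi-class inequalities used for supersolutions of the singular $p$-Laplacian. The same comparability yields the logarithmic estimates, by testing with $\ln_+$ functions of $(w-k)_-$, since the weight factors out as $\omega^{q-1}$.

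\textbf{Step 2: run the singular-case expansion argument.} This follows DiBenedetto--Gianazza--Vespri, as in \cite{Lia, Bog2}, entirely through these inequalities.
\begin{itemize}
\item[(a)] Propagate the measure information \eqref{eq2.14} forward in time. From the $L^2$-type energy estimate on $B_{4r}$ one gets $|B_{4r}\cap\{w(\cdot,t)\geqslant \epsilon a\omega\}|\geqslant \frac{\alpha}{2}4^{-N}|B_{4r}|$ for $t\in(s,s+\delta a^{2-p}\omega^{q-p+1}r^p)$. The natural time scale is $(a\omega)^{2-p}\omega^{q-1}r^p=a^{2-p}\omega^{q-p+1}r^p$.
\item[(b)] Apply a shrinking lemma: the energy estimates and the De Giorgi isoperimetric inequality make $|\{w\leqslant \epsilon a\omega/2^j\}|$ small in the cylinder.
\item[(c)] Apply a De Giorgi lemma of the type of Lemma \ref{lem2.4}. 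Its alternative \eqref{eq2.7} is excluded here, since the weight is already frozen.
\item[(d)] Use the change of variables to exponential time and the "expansion in time" device for the singular range $p<2$. This converts positivity on a sub-cylinder into the bound $w\geqslant\eta a\omega$ on all of $B_{2r}\times(s+\frac12\delta a^{2-p}\omega^{q-p+1}r^p,\,s+\delta a^{2-p}\omega^{q-p+1}r^p)$.
\end{itemize}

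\textbf{Main obstacle.} The hard part is step (d). In the singular case the standard proof applies the comparison or De Giorgi argument to a transformed function $v(y,\tau)=e^{\tau/(2-p)}w(\cdot)/(\cdots)$. One has to check that $v$ again satisfies the energy inequalities with the frozen weight. The levels in that argument must remain below $\frac12\lambda\omega$ in original units so that the comparability of Step 1 persists, and the exponential factor must not push the truncation levels above that threshold. I would first try to verify this directly at the level of the energy inequality \eqref{eq2.5}: rather than working with the equation, I would check that each test-function computation in \cite{Lia} only uses $g_\pm\sim\omega^{q-1}(w-k)_-^2$ for $k\leqslant a\omega$. Since $a<\frac12\lambda$, this keeps all constants dependent only on the data, $\alpha$, $\lambda$ and $\Lambda$.
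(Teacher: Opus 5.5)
The paper does not prove this lemma; it quotes it from \cite[Lemma 3.4]{Lia} and \cite[Lemma 3.3]{Bog2}. Your key observation is the right mechanism. For every level $k\leqslant a\omega<\frac12\lambda\omega$, the truncation $(w-k)_-$ lives where $\frac12\lambda\omega\leqslant|u|\leqslant\Lambda\omega$, so the weight is frozen at $\omega^{q-1}$. Your Step 1 and steps (a), (b) are fine.

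Step (c) is misstated. Lemma \ref{lem2.4} cannot be invoked: for the levels $\xi\omega\leqslant a\omega$ you use, $|\mu^{\pm}|\geqslant\lambda\omega>8\xi\omega$, so \eqref{eq2.7} is exactly the case you are in and the lemma gives nothing. What you need is the frozen-weight ($p$-Laplacian) De Giorgi lemma, which Step 1 does supply.

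The real gap is step (d). You leave it open, and the check you propose would not close it. If, as you describe, the argument passes through a rescaled function such as $e^{\tau/(2-p)}w$, then energy estimates for its truncations at constant levels are estimates for truncations of $w$ at \emph{time-dependent} levels. Those are not among the inequalities \eqref{eq2.5}, which have constant $k$, so you must return to the equation. There, for the upper sign with $h(t)=\mu^+-k(t)$ the level for $u$, the time derivative produces an extra term $\partial_h g_+(u,h(t))\,h'(t)$. This term is not controlled by the bound $g_\pm\sim\omega^{q-1}(w-k)_-^2$ alone. It is controllable, since $\partial_h g_+(u,h)=-\big(|u|^{q-1}u-|h|^{q-1}h\big)_+$ has modulus comparable to $\omega^{q-1}(w-k)_-$ on the frozen range. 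But you have not done this computation, and every lemma of the singular argument would need rechecking this way.

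The clean way to finish is to make the freezing exact, so the singular expansion of positivity for the parabolic $p$-Laplacian can be used as a black box.
\begin{itemize}
\item For the upper sign, Lemma \ref{lem2.8} says $\tilde u:=\max\{u,\mu^+-a\omega\}$ is again a sub-solution, with $\frac12\lambda\omega\leqslant\tilde u\leqslant\Lambda\omega$. You need this truncation because $u$ itself may change sign: only $\mu^+$ is controlled.
\item Set $v:=(\tilde u/\omega)^q$ and $\tau:=\omega^{p-1-q}t$. Since $|\tilde u|^{q-1}\tilde u=\omega^q v$ and $D\tilde u=\frac{\omega}{q}v^{\frac1q-1}Dv$, the weak form \eqref{eq2.1} for $\tilde u$ becomes exactly that of a sub-solution of $v_\tau-\mathrm{div}\,\hat{\mathbf A}(x,\tau,v,Dv)=0$. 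Here $\hat{\mathbf A}(x,\tau,v,\xi):=\omega^{1-p}\mathbf A\big(x,\omega^{q-p+1}\tau,\omega v^{1/q},\frac{\omega}{q}v^{\frac1q-1}\xi\big)$ satisfies \eqref{eq1.2} with constants depending only on the data, $\lambda$, $\Lambda$. This is the change of variables the paper itself uses in Section 5.
\item The set $\{\mu^+-u(\cdot,s)\geqslant a\omega\}$ is exactly $\{\tilde u(\cdot,s)=\mu^+-a\omega\}$. On the frozen range $(\mu^+)^q-\tilde u^q$ is comparable to $\omega^{q-1}(\mu^+-\tilde u)$. Hence $W:=(\mu^+/\omega)^q-v$ is a non-negative super-solution with $W\geqslant c\,a$ on a fraction $\alpha$ of $B_r(y)$ at the time corresponding to $s$.
\item The known result gives $W\geqslant\eta\,c\,a$ on $B_{2r}(y)$ after a $\tau$-time of order $\delta a^{2-p}r^p$, i.e.\ a $t$-time of order $\delta a^{2-p}\omega^{q-p+1}r^p$.
\item Then $\mu^+-u\geqslant\mu^+-\tilde u\geqslant c'\eta\,a\,\omega$, which is \eqref{eq2.15}.
\end{itemize}
The lower sign is symmetric, using $\min\{u,\mu^-+a\omega\}$ and $v=(-\tilde u/\omega)^q$.
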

 
 \begin{lemma}\label{lem2.7}
 Let $u$ be a locally bounded, local, weak sub(super)-solution to \eqref{eq1.1}, \eqref{eq1.2} in $\Omega_T$, $q>0$, $p>2$ and suppose also 
that
\begin{equation}\label{eq2.16}
\lambda\,\omega \leqslant \pm \mu^{\pm}\leqslant \Lambda\,\omega,
\end{equation}
with some $0<\lambda <\Lambda$ and
\begin{equation}\label{eq2.17}
\big|B_r(y)\cap\big\{\pm(\mu^{\pm}-u(\cdot, s))\geqslant a\,\omega\big\}\big|\geqslant \alpha |B_r(y)|,
\end{equation}
with some $a<\frac{1}{2}\,\lambda$ and some $\alpha \in (0, 1)$. Then there exist constants $B>1$, $\eta \in (0, 1)$ depending
only on the data,  $\alpha$, $\lambda$, $\Lambda$ such that
\begin{equation}\label{eq2.18}
\pm(\mu^{\pm}-u)\geqslant \eta\,a\,\omega \quad \text{in}\quad  B_{2r}(y)\times (s+\frac{1}{2} B\,\omega^{q-p+1}\,r^p, s+\,B\,\omega^{q-p+1}\,r^p),
\end{equation}
provided that $B_{2r}(y)\times (s, s+ B\,\omega^{q-p+1}\,r^p)\subset \Omega_T$.
 \end{lemma}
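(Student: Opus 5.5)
The plan is to treat only the lower case (the upper one is symmetric). Here $u\geqslant\mu^-$, and \eqref{eq2.16} says $-\Lambda\omega\leqslant\mu^-\leqslant-\lambda\omega$. I work with $v:=u-\mu^-\geqslant0$ and only at levels $k\leqslant a\omega<\frac12\lambda\omega$. Whenever $v\leqslant k$ we have $|u|\in[\frac12\lambda\omega,\Lambda\omega]$, so the time nonlinearity is \emph{non-degenerate} on the relevant sublevel sets. By Lemma \ref{lem2.1}, the function $g_-(u,k+\mu^-)$ of Lemma \ref{lem2.3} is then two-sided comparable to $\omega^{q-1}(v-k)_-^2$, with constants depending on $q,\lambda,\Lambda$. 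So the energy estimates \eqref{eq2.5} for $(v-k)_-$ take exactly the form of those for supersolutions of
$$\omega^{q-1}\,\partial_t v-\operatorname{div}\mathbf{A}\geqslant0 ,$$
that is, of a degenerate ($p>2$) parabolic $p$-Laplacian in the time variable $\tau=(t-s)/\omega^{q-1}$. Log-type estimates for $(v-k)_-$ hold for the same reason, using the test function $\Psi'(v)$ with $\Psi$ the usual logarithmic function built from levels below $a\omega$.

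The argument then follows DiBenedetto's proof of expansion of positivity for the degenerate $p$-Laplacian (cf.\ \cite{DiB1,DiB3}, \cite[Lemma 3.3]{Bog2}), transplanted to the scaled time $\tau$, in the following steps.
\begin{enumerate}
\item \emph{Propagation of measure in time.} From \eqref{eq2.17} and the energy estimate on $B_r$, I get $|B_r\cap\{v(\cdot,t)\geqslant\varepsilon a\omega\}|\geqslant\frac{\alpha}{2}|B_r|$ for $t-s\leqslant\delta\,\omega^{q-1}(a\omega)^{2-p}r^p$, with $\varepsilon,\delta$ depending on the data, $\alpha,\lambda,\Lambda$.
\item \emph{Change of variables.} I introduce DiBenedetto's exponential change of variables
$$w(x,e^{\,\cdot})\sim \frac{e^{\tau/(p-2)}}{a\omega}\,v\big(x,\;s+\delta\,\omega^{q-1}(a\omega)^{2-p}e^{-\tau}r^p\big)\ \text{(suitably normalized)}.$$
This turns the short time window of step 1 into a long one in which $w$ has a measure-positivity bound for all times.
\item \emph{Shrinking and De Giorgi.} A measure-shrinking lemma on the enlarged cylinder $B_{8r}$ follows from the energy inequality plus De Giorgi's isoperimetric lemma. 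Applying the De Giorgi Lemma \ref{lem2.4} then gives a pointwise lower bound for $w$ on $B_{2r}$.
\item \emph{Returning to $v$.} Going back to $v$ yields $v\geqslant\eta a\omega$ on $B_{2r}$ over a window of length comparable to $\omega^{q-1}(\eta a\omega)^{2-p}r^p$, placed after a waiting time of the same order.
\end{enumerate}

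To get the window in the form stated, $(s+\frac12B\omega^{q-p+1}r^p,\ s+B\omega^{q-p+1}r^p)$, I absorb the factor $(\eta a)^{2-p}$ into the choice of the final level. I pick the level $\eta$ so that $\omega^{q-1}(\eta a\omega)^{2-p}r^p$ matches $B\omega^{q-p+1}r^p$, decreasing $\eta$ if needed. The interval can be moved to the required position because positivity, once reached, persists forward in time by step 1 applied again at the new level.

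The main obstacle I expect is step 2. The nonlinearity $|u|^{q-1}u$ is not linear in $v$, so the transformed function $w$ is not literally a supersolution of a $p$-Laplace-type equation. I would therefore not transform the equation itself. Instead I would carry out the change of variables only at the level of the energy and logarithmic estimates, where the time term appears through $g_-$, and there the comparability $g_-\simeq\omega^{q-1}(v-k)_-^2$ holds uniformly because $|u|\simeq\omega$ on the sets involved. I expect the ratio $\lambda/\Lambda$ to enter all the constants this way, which is why $B$ and $\eta$ depend on $\lambda,\Lambda$.
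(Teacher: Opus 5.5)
\emph{Verdict: genuine gap.} Your last step tries to make $B$ independent of $a$, and that cannot be done because the statement as written is false.

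The paper gives no proof of its own; it quotes this lemma from \cite[Lemma 3.3]{Bog2}. Your mechanism is the natural one. Below the level $\frac12\lambda\omega$ one has $|u|\in[\frac12\lambda\omega,\Lambda\omega]$, so the time part is non-degenerate, and DiBenedetto's expansion of positivity for the degenerate $p$-Laplacian can be run in the time $\omega^{1-q}(t-s)$.

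The gap is your last paragraph. Steps 1--4 give $v\geqslant\eta a\omega$ only after a waiting time of order $\omega^{q-1}(\eta a\omega)^{2-p}r^p=(\eta a)^{2-p}\omega^{q-p+1}r^p$. That means $B\simeq(\eta a)^{2-p}$, which depends on $a$ and blows up as $a\to0$. Decreasing $\eta$ goes the wrong way: for $p>2$ it \emph{increases} $(\eta a)^{2-p}$. Re-applying step 1 can only move the window later, never earlier.

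No argument can do better. Take $q=1$, $\mu^-=-\lambda\omega$ and $u=\mu^-+v$, where $v_t=\Delta_p v$ and $v(\cdot,s)=a\omega\,\chi_{B_r(y)}$. All hypotheses hold. However, $v(x,t)=a\omega\,v_1\big(\frac{x-y}{r},\frac{(a\omega)^{p-2}(t-s)}{r^p}\big)$, and $v_1$ has finite speed of propagation. Hence $v$ vanishes near $\partial B_{2r}(y)$ for $t-s\leqslant c\,(a\omega)^{2-p}r^p$, and the conclusion fails on the whole window once $a^{p-2}B<c$. Near $\mu^-$ the equation behaves like $\omega^{q-1}\partial_t v=\Delta_p v$, so the same obstruction occurs for every $q>0$.

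What your steps actually prove is the lemma with $B$ depending also on $a$. Equivalently, the window is $\big(s+\frac12Ba^{2-p}\omega^{q-p+1}r^p,\,s+Ba^{2-p}\omega^{q-p+1}r^p\big)$, as in Lemma \ref{lem2.6}. This is all the paper needs, since the lemma is used only in Section 4.3 (case \eqref{eq4.4}), where $a=\frac12\epsilon_*$ and $\lambda=\epsilon_*$ are fixed by the data. You should have flagged the discrepancy rather than absorbing it.

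Several secondary points also need repair.
\begin{itemize}
\item Step 3 cannot invoke Lemma \ref{lem2.4}. Its first alternative \eqref{eq2.7}, $|\mu^{\pm}|>8\xi\omega$, is exactly your situation, since $|\mu^-|\geqslant\lambda\omega$ while your levels are $\ll\lambda\omega$; so it yields nothing. It is also stated for $u$ with the degenerate scaling $(\xi\omega)^{q-p+1}r^p$, not for $w$. You need a De Giorgi lemma for the non-degenerate regime, with cylinder heights $\simeq\omega^{q-1}(\xi\omega)^{2-p}r^p$, derived from your own energy estimates.
\item Working on $B_{8r}$ in step 3 requires $u\geqslant\mu^-$ on that larger ball. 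This is more than the stated proviso on $B_{2r}$.
\item In step 1, replacing $g_-$ by $\omega^{q-1}(v-k)_-^2$ with constants $c_1\leqslant c_2$ on the two sides is too crude. It only gives $|\{v(\cdot,t)<\varepsilon a\omega\}\cap B_{(1-\sigma)r}|\leqslant\frac{c_2}{c_1(1-\varepsilon)^2}(1-\alpha)|B_r|+\dots$, which is useless for small $\alpha$. Keep $g_-=G((v-k)_-)$ itself and use $G(k)/G((1-\varepsilon)k)\to1$ as $\varepsilon\to0$.
\item In step 2 the time must be $s+\delta\omega^{q-1}(a\omega)^{2-p}e^{\tau}r^p$, not $e^{-\tau}$.
\item Transporting the energy estimates through this change of variables means using levels for $v$ that decrease in time. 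For supersolutions the resulting extra term has a favourable sign, but you need to verify this rather than assume it.
\end{itemize}
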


 \subsection{Notion of Parabolicity}
The following lemma will be used in the sequel, it can be extracted, for example from \cite{Bog2}.
\begin{lemma}\label{lem2.8}
Let $u$ be a local weak sub(super)-solution to \eqref{eq1.1}, \eqref{eq1.2}. Then, for any
$k\in \mathbb{R}$, the truncation $k\pm (u-k)_{\pm}$ is a local weak sub(super)-solution to \eqref{eq1.1}, \eqref{eq1.2}.
 \end{lemma}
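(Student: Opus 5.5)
The last statement is Lemma \ref{lem2.8}: for a sub(super)-solution $u$ and any $k\in\mathbb{R}$, the truncation $k\pm(u-k)_\pm$ is again a sub(super)-solution. I argue the sub-solution case with $v:=k+(u-k)_+=\max(u,k)$; the super-solution case with $\min(u,k)$ is symmetric. The idea is to test the mollified weak formulation with a cutoff of $(u-k)_+$, let the cutoff become sharp, and read off the inequality for $v$. Since the time derivative of $|u|^{q-1}u$ exists only distributionally, I rewrite \eqref{eq2.1} in terms of the Steklov/exponential mollification $\llbracket\cdot\rrbracket_h$. For every nonnegative $\varphi\in C^\infty_0$ of the admissible class this gives
$$\iint \partial_t\llbracket |u|^{q-1}u\rrbracket_h\,\varphi + \llbracket\mathbf{A}(x,t,u,Du)\rrbracket_h\cdot D\varphi\,dx\,dt\leqslant \text{(error)}\to 0,\qquad h\to 0.$$
The properties of $\llbracket\cdot\rrbracket_h$ from \cite{Kin} justify this step.

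Fix $\varepsilon>0$ and set $H_\varepsilon(s)=\min(1,s_+/\varepsilon)$, a Lipschitz approximation of $\chi_{\{s>0\}}$. I test with $\varphi H_\varepsilon(u-k)$, where $\varphi\geqslant0$ is the test function for $v$; this product is admissible since $u\in L^p(W^{1,p})$ and $H_\varepsilon$ is Lipschitz. The elliptic term splits into two pieces:
$$\mathbf{A}\cdot D\varphi\,H_\varepsilon(u-k)+\tfrac1\varepsilon\,\mathbf{A}\cdot D u\,\varphi\,\chi_{\{0<u-k<\varepsilon\}}.$$
By the coercivity in \eqref{eq1.2} the second piece is nonnegative, so it can be discarded in a sub-solution inequality. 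As $\varepsilon\to0$ the first piece tends to $\iint \mathbf{A}(x,t,u,Du)\chi_{\{u>k\}}\cdot D\varphi$. Because $Dv=Du\,\chi_{\{u>k\}}$ a.e., and $\mathbf{A}(x,t,k,0)=0$ by the growth bound, this equals $\iint\mathbf{A}(x,t,v,Dv)\cdot D\varphi$.

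The time term is the main obstacle. Setting $\beta(u)=|u|^{q-1}u$, I need to show that as $h\to0$ and then $\varepsilon\to0$,
$$\iint\partial_t\llbracket\beta(u)\rrbracket_h\,H_\varepsilon(u-k)\,\varphi\;\longrightarrow\;-\iint \big(\beta(v)-\beta(k)\big)\varphi_t+\text{boundary terms},$$
up to a sign-favourable error. The sign of that error is what makes the inequality survive the limit.

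First I would use that $\beta$ is monotone, so the function $\Psi_\varepsilon(w)=\int_{\beta(k)}^{w}H_\varepsilon(\beta^{-1}(z)-k)\,dz$ is convex in $w$. Convexity gives
$$\partial_t\llbracket\beta(u)\rrbracket_h\,H_\varepsilon(u-k)\geqslant \partial_t\Psi_\varepsilon(\llbracket\beta(u)\rrbracket_h).$$
This uses the identity $\partial_t\llbracket w\rrbracket_h=(w-\llbracket w\rrbracket_h)/h$, as in the standard treatment of \cite{Kin}. Integrating by parts in $t$ and letting $h\to0$, then $\varepsilon\to0$, sends $\Psi_\varepsilon(\beta(u))$ to $(\beta(u)-\beta(k))_+=\beta(v)-\beta(k)$. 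Since $\int\beta(k)\varphi_t$ together with its boundary terms is an exact time derivative, it contributes nothing, and the weak sub-solution inequality for $v$ follows. Since $\varphi\geqslant0$, the inequality has the right direction. The care needed is in the regime $q<1$, where $\beta^{-1}$ is not Lipschitz near $0$. If $k$ is near $0$ this is delicate, but $H_\varepsilon\circ(\beta^{-1}-k)$ remains bounded and monotone, which is all that convexity requires.
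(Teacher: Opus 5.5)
Your proof is correct. Testing the mollified formulation with $\varphi\,H_\varepsilon(u-k)$, letting $h\to0$ and dropping the nonnegative term $\frac{1}{\varepsilon}\varphi\,\mathbf{A}\cdot Du\,\chi_{\{0<u-k<\varepsilon\}}$, and using the monotonicity of $z\mapsto H_\varepsilon(\beta^{-1}(z)-k)$ to get $\partial_t\llbracket\beta(u)\rrbracket_h\,H_\varepsilon(u-k)\geqslant\partial_t\Psi_\varepsilon(\llbracket\beta(u)\rrbracket_h)$ is the standard argument for this fact. The paper itself gives no proof and simply defers to \cite{Bog2}.

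One harmless slip: it is for $q>1$, not $q<1$, that $\beta^{-1}(z)=|z|^{\frac{1}{q}-1}z$ fails to be Lipschitz at $0$. As you note, only the boundedness and monotonicity of $H_\varepsilon(\beta^{-1}(\cdot)-k)$ enter the argument, so this does not matter.
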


\section{Proof of Proposition \ref{pr1.1} Under Conditions $q<p-1$ and $p<2$}

In the case $q<p-1$ and $p<2$ we assume that $\eta_0=1$ and hence, we assume that the following inequality holds
\begin{equation}\label{eq3.1}
\max\big(\mu^+, -\mu^-\big)\leqslant 2\,\omega.
\end{equation} 

In what follows  we can suppose that
\begin{equation}\label{eq3.2}
\max(\mu^+, -\mu^-) \geqslant \frac{1}{4}\omega,
\end{equation}
since otherwise 
\begin{equation}\label{eq3.3}
\osc\limits_{Q_{r, b\, \omega^{q-p+1} r^p}} u\leqslant \frac{1}{2}\,\omega,
\end{equation}
here $b\in (0, 1)$ is the number to be fixed later.
Condition \eqref{eq3.2} implies that either
\begin{equation}\label{eq3.4}
\mu^+=\max (\mu^+, -\mu^-) \geqslant \frac{1}{4}\,\omega,
\end{equation}
or
\begin{equation}\label{eq3.5}
-\mu^-=\max (\mu^+, -\mu^-) \geqslant \frac{1}{4}\,\omega.
\end{equation}

\subsection{Proof of Proposition \ref{pr1.1} Under Condition \eqref{eq3.4}}
Under conditions \eqref{eq3.4} we  assume that one of the following alternatives holds:
either
\begin{equation}\label{eq3.6}
|Q_{\frac{3}{4}r, b\,\omega^{q-p+1} (\frac{3}{4}r)^p}\cap\{u\leqslant \mu^-+\frac{1}{4}\,\omega\}|\leqslant \nu\,|Q_{\frac{3}{4}r, b\,\omega^{q-p+1} (\frac{3}{4}r)^p}|,
\end{equation}
or
\begin{multline}\label{eq3.7}
|Q_{\frac{3}{4}r, b\,\omega^{q-p+1} (\frac{3}{4}r)^p}\cap\{u\leqslant \mu^+-\frac{1}{4}\,\omega\}|\geqslant
|Q_{\frac{3}{4}r, b\,\omega^{q-p+1} (\frac{3}{4} r)^p}\cap\{u\leqslant \mu^-+\frac{1}{4}\,\omega\}|\geqslant\\\geqslant \nu\,|Q_{\frac{3}{4}r, b\,\omega^{q-p+1} (\frac{3}{4} r)^p}|,
\end{multline}
where $\nu=\frac{1}{\gamma}\,b^{\frac{N}{p}}$,  is the number defined in Remark $2.1$.

\subsubsection{Analysis of the First Alternative}
Let inequality \eqref{eq3.6} holds , using the evident fact that $\max(\mu^+, -\mu^-)=\max(|\mu^+|, |\mu^-|)$, by \eqref{eq3.1} and  a De Giorgi type lemma, Lemma \ref{lem2.4} we obtain
$$\inf\limits_{Q_{\frac{1}{2} r, \frac{1}{2} b\,\omega^{q-p+1} r^p}} u \geqslant \mu^-+\frac{1}{8}\,\omega,$$
which yields
\begin{equation}\label{eq3.8}
\osc\limits_{Q_{\frac{1}{2}r, \frac{1}{2} b\,\omega^{q-p+1} r^p}} u\leqslant \mu^+-\mu^--\frac{1}{8}\,\omega\leqslant \frac{7}{8}\,\omega.
\end{equation}

\subsubsection{Analysis of the Second Alternative}
Under condition \eqref{eq3.7} there exists a time level $s\in (t_0-b\,\omega^{q-p+1}(\frac{3}{4}r)^p, t_0)$ such that
\begin{equation*}
\big|B_{\frac{3}{4} r}(x_0)\cap \big\{u(\cdot, s)\leqslant \mu^+-\frac{1}{4}\omega\big\}\big|\geqslant \nu\,|B_{\frac{3}{4}r}(x_0)|,\quad \nu=\frac{1}{\gamma}\,b^{\frac{N}{p}}.
\end{equation*}

By our choices $q<1$, we use Theorem \ref{th1.2} with  $\delta=\frac{1}{2}$, $\sigma=\frac{1}{4}$ and $l=\frac{2p^2}{2-p}$ for the function $|\mu^+|^{q-1}\mu^+-|u|^{q-1}u$ in the cylinder $Q_{r, b\, \omega^{q-p+1} r^p}$. Note that 
$$\mu^+-u\leqslant |\mu^+|+|u|\leqslant 2 \max(\mu^+, -\mu^-),$$
and then, by the algebraic lemma, Lemma \ref{lem2.1}
$$\frac{1}{\gamma}[\max(\mu^+, -\mu^-)]^{q-1}(\mu^+-u)\leqslant |\mu^+|^{q-1}\mu^+-|u|^{q-1} u\leqslant \gamma (\mu^+-u)^q.$$
So, using \eqref{eq3.1} we obtain for all $t\in (t_0-b\,\omega^{q-p+1}\,r^p, t_0)$ 
\begin{multline}\label{eq3.9}
\frac{1}{\gamma}\,b^{\frac{N}{p}}\,\omega^q=\frac{1}{\gamma}\,\nu\,\omega^q\leqslant \frac{1}{\gamma} \big[\max(\mu^+, -\mu^-)\big]^{q-1}\fint\limits_{B_{\frac{3}{4}r}(x_0)\cap\{u(\cdot, s)\leqslant \mu^+-\frac{1}{4}\omega\}}\big(\mu^+-u(x, s)\big)\,dx\leqslant\\\leqslant 
\gamma\,\fint\limits_{B_{\frac{3}{4} r}(x_0)\times\{s\}}\Big[|\mu^+|^{q-1}\mu^+-|u|^{q-1} u\Big]\,dx\leqslant
\gamma\,\fint\limits_{B_r(x_0)\times\{t\}}\Big[|\mu^+|^{q-1}\mu^+-|u|^{q-1} u\Big]\,dx
+\\+\gamma\,\big[\max(\mu^+, -\mu^-)\big]^{\frac{(1-q)(p-1)}{2-p}}(b\omega^{q-p+1})^{\frac{1}{2-p}}\leqslant
\gamma \fint\limits_{B_r(x_0)\times\{t\}}(\mu^+-u)^q\,dx+\gamma b^{\frac{1}{2-p}}\,\omega^q\leqslant\\\leqslant
\gamma \Big[\epsilon^q+b^{\frac{1}{2-p}}\Big]\omega^q+ \gamma\,\omega^q\frac{|B_r(x_0)\cap\{\mu^+-u(\cdot, t)\geqslant \epsilon \omega\}|}{|B_r(x_0)|}.
\end{multline}
Fix $b\in (0, 1)$ from the condition 
$\gamma\,b^{\frac{1}{2-p}}\leqslant \frac{1}{16\gamma}\,b^{\frac{N}{p}}$, that is $b^{\frac{p+N(p-2)}{p(2-p)}}\leqslant \frac{1}{16\gamma}$
and then choosing $\epsilon=\epsilon(b) \in (0, 1)$ such that $\gamma \epsilon^q=\frac{1}{16 \gamma}\,b^{\frac{N}{p}},$
from \eqref{eq3.9} we arrive at
\begin{equation}\label{eq3.10}
|B_r(x_0)\cap\{\mu^+-u(\cdot, t)\geqslant \epsilon \omega\}|\geqslant \alpha |B_r(x_0)|,\quad \alpha=
\frac{1}{\gamma}b^{\frac{N}{p}}\in (0, 1),
\end{equation}
for all $t\in (t_0-b\,\omega^{q-p+1}\,r^p, t_0)$. Now we use Lemma \ref{lem2.6}, by \eqref{eq3.1}, \eqref{eq3.4} and \eqref{eq3.10} this lemma is applicable, so, there exist $\eta_1$, $\delta_1 \in (0, 1)$ depending only on the data
\begin{equation*}
u\leqslant \mu^+-\eta_1\,\epsilon\,\omega\quad \text{in}\quad B_{\frac{1}{2}r}(x_0)\times (t_0-(b-\delta_1\,\epsilon^{2-p})\omega^{q-p+1} r^p, t_0),
\end{equation*}
choosing $\epsilon$ smaller if necessary, this  inequality yields
\begin{equation}\label{eq3.11}
\osc\limits_{Q_{\frac{1}{2}r, \frac{1}{2} b\,\omega^{q-p+1} r^p}} u\leqslant (1-\eta_1\,\epsilon)\,\omega.
\end{equation}

\subsection{Proof of Proposition \ref{pr1.1} under Condition \eqref{eq3.5}}

Under condition \eqref{eq3.5} we change the alternatives and  assume that  
either
\begin{equation}\label{eq3.12}
|Q_{\frac{3}{4}r, b\,\omega^{q-p+1} (\frac{3}{4}r)^p}\cap\{u\geqslant \mu^+-\frac{1}{4}\,\omega\}|\leqslant \nu\,|Q_{\frac{3}{4}r, b\,\omega^{q-p+1} (\frac{3}{4}r)^p}|,
\end{equation}
or
\begin{multline}\label{eq3.13}
|Q_{\frac{3}{4}r, b\,\omega^{q-p+1} (\frac{3}{4}r)^p}\cap\{u\geqslant \mu^- +\frac{1}{4}\,\omega\}|\geqslant
|Q_{\frac{3}{4}r, b\,\omega^{q-p+1} (\frac{3}{4} r)^p}\cap\{u\geqslant \mu^+-\frac{1}{4}\,\omega\}|\geqslant\\\geqslant \nu\,|Q_{\frac{3}{4}r, b\,\omega^{q-p+1} (\frac{3}{4} r)^p}|,
\end{multline}
where $\nu=\frac{1}{\gamma}\,b^{\frac{N}{p}}$ is the number defined in Remark $2.1$.

The proof is completely similar to the previous one, so, we give a sketch here.
Under condition \eqref{eq3.12} we use a De Giorgi type lemma, Lemma \ref{lem2.4} and obtain
\begin{equation}\label{eq3.14}
\osc\limits_{Q_{\frac{1}{2} r, \frac{1}{2} b\, \omega^{q-p+1}\,r^p}} u\leqslant \frac{7}{8}\,\omega.
\end{equation}
If condition \eqref{eq3.13} holds, then there exists a time level $s\in (t_0-b\,\omega^{q-p+1}(\frac{3}{4}r)^p, t_0)$ such that
\begin{equation*}
\big|B_{\frac{3}{4} r}(x_0)\cap \big\{u(\cdot, s)\geqslant \mu^-+\frac{1}{4}\omega\big\}\big|\geqslant \nu\,|B_{\frac{3}{4}r}(x_0)|,\quad \nu=\frac{1}{\gamma}\,b^{\frac{N}{p}}.
\end{equation*}
We use Theorem \ref{th1.2} for the function $|u|^{q-1}u- |\mu^-|^{q-1}\mu^-$ in the cylinder $Q_{r, b\,\omega^{q-p+1}r^p}$. Similarly  to \eqref{eq3.10},
choosing $b$ and $\epsilon(b)$ small enough, we obtain
\begin{equation}\label{eq3.15}
|B_r(x_0)\cap\{u(\cdot, t)-\mu^-\geqslant \epsilon \omega\}|\geqslant \alpha |B_r(x_0)|,\quad \alpha=
\frac{1}{\gamma}\,b^{\frac{N}{p}}\in (0, 1),
\end{equation}
for all $t\in (t_0-b\,\omega^{q-p+1}\,r^p, t_0)$. Now we use Lemma \ref{lem2.6}, by \eqref{eq3.1}, \eqref{eq3.5} and \eqref{eq3.15} this lemma is applicable, so, there exist $\eta_2$, $\delta_2 \in (0, 1)$ depending only on the data
\begin{equation*}
u\geqslant \mu^-+\eta_2\,\epsilon\,\omega\quad \text{in}\quad B_{\frac{1}{2}r}(x_0)\times (t_0-(b-\delta_2\,\epsilon^{2-p})\omega^{q-p+1} r^p, t_0),
\end{equation*}
choosing $\epsilon$ smaller if necessary, this  inequality yields
\begin{equation}\label{eq3.16}
\osc\limits_{Q_{\frac{1}{2}r, \frac{1}{2} b\,\omega^{q-p+1} r^p}} u\leqslant (1-\eta_2\,\epsilon)\,\omega.
\end{equation}

\subsubsection{Proof of Proposition \ref{pr1.1} Concluded}

Collecting \eqref{eq3.3}, \eqref{eq3.8}, \eqref{eq3.11}, \eqref{eq3.14}, \eqref{eq3.16} and choosing $\sigma_0=\max(1-\eta_1\epsilon, 1-\eta_2 \epsilon)$ and $\epsilon_0=\frac{1}{2}\,\sigma^{\frac{p-q-1}{p}}_0$ we arrive at
\begin{equation}\label{eq3.17}
\osc\limits_{Q_{\epsilon_0 r, b\,(\sigma_0 \omega)^{q-p+1} (\epsilon_0 r)^p}} u \leqslant \sigma_0\,\omega,
\end{equation}
which  completes the proof of Proposition \ref{pr1.1} under conditions $q<p-1$ and $p<2$.

\section{Proof of Proposition \ref{pr1.1} Under Conditions $q>p-1$ and $p>2$}
Recall that we prove Proposition \ref{pr1.1} under the condition
\begin{equation}\label{eq4.1}
\max(\mu^+, -\mu^-)\leqslant (1+\eta_0)\,\omega.
\end{equation} 
Fix  $\epsilon_*\in (0, 1)$ to be defined depending only on the data. In what follows we assume that  either 
$$\min(\mu^+, -\mu^-)\leqslant \epsilon_*\,\omega,$$
or
$$\min(\mu^+, -\mu^-)\geqslant \epsilon_*\,\omega.$$
And hence, the following three
different cases are possible:  either
\begin{equation}\label{eq4.2}
-\mu^-=\min(\mu^+, -\mu^-)\leqslant \epsilon_*\,\omega ,\quad \text{and}\quad \mu^+=\max(\mu^+, -\mu^-)\leqslant (1+\eta_0)\,\omega,
\end{equation}
or
\begin{equation}\label{eq4.3}
\mu^+=\min(\mu^+, -\mu^-)\leqslant \epsilon_*\, \omega,\quad \text{and}\quad -\mu^-=\max(\mu^+, -\mu^-)\leqslant (1+\eta_0)\,\omega,
\end{equation}
or
\begin{equation}\label{eq4.4}
\epsilon_*\,\omega\leqslant \min(\mu^+, -\mu^-)\leqslant \max(\mu^+, -\mu^-)\leqslant (1+\eta_0)\,\omega.
\end{equation}

\subsection{Proof of Proposition \ref{pr1.1} Under Condition \eqref{eq4.2}}

Under condition \eqref{eq4.2} we assume that one of the following alternative cases holds: either
\begin{equation}\label{eq4.5}
\big|Q_{\frac{3}{4} r, b\omega^{q-p+1} r^p}\cap\big\{u\geqslant \mu^+-\frac{1}{4}\omega\big\}\big|\leqslant \nu|Q_{\frac{3}{4}r, b\omega^{q-p+1}r^p}|,
\end{equation}
or
\begin{multline}\label{eq4.6}
\big|Q_{\frac{3}{4} r, b\omega^{q-p+1} r^p}\cap\big\{u\geqslant \mu^-+\frac{1}{4}\omega\big\}\big|\geqslant \big|Q_{\frac{3}{4} r, b\omega^{q-p+1} r^p}\cap\big\{u\geqslant \mu^+-\frac{1}{4}\omega\big\}\big|\geqslant\\\geqslant \nu|Q_{\frac{3}{4}r, b\omega^{q-p+1}r^p}|,
\end{multline}
where  $\nu=\frac{1}{\gamma}\,b^{\frac{N}{p}}$ is the number defined in Remark $2.1$.

\subsubsection{Analysis of the First Alternative}
If condition \eqref{eq4.5} holds we use a De Giorgi type lemma, Lemma \ref{lem2.4}, by the second inequality in \eqref{eq4.2} this lemma is applicable, hence we obtain
$$u\leqslant \mu^+-\frac{1}{8}\,\omega,\quad \text{in}\quad Q_{\frac{1}{2}r, \frac{1}{2} b\omega^{q-p+1} r^p},$$
which yields
\begin{equation}\label{eq4.7}
\osc\limits_{Q_{\frac{1}{2}r, \frac{1}{2} b\omega^{q-p+1} r^p}} u\leqslant \frac{7}{8}\,\omega.
\end{equation}

\subsubsection{Analysis of the Second Alternative}
Under condition \eqref{eq4.6} there exists a time level $s\in (t_0-b\omega(\frac{3}{4} r)^p, t_0)$ such that
\begin{equation*}
\big|B_{\frac{3}{4} r}(x_0)\cap\big\{u(\cdot, s)\geqslant \mu^-+\frac{1}{4}\omega\big\}\big|\geqslant \nu\,|B_{\frac{3}{4}r}(x_0)|.
\end{equation*}
By the fact that $q>1$, using the algebraic lemma, Lemma \ref{lem2.1}
\begin{multline*}
[\pm(\mu^{\pm}-u)]^q\leqslant \gamma \big[\pm(\mu^{\pm}-u)\big]\,(|\mu^{\pm}|+|u|)^{q-1}\leqslant \gamma \big[\pm(|\mu^{\pm}|^{q-1}\mu^{\pm}-|u|^{q-1}u)\big]\leqslant\\\leqslant \gamma \big[\pm(\mu^{\pm}-u)\big]\,(|\mu^{\pm}|+|u|)^{q-1}
\leqslant \gamma [\max(\mu^+, -\mu^-)]^{q-1} \pm(\mu^{\pm}-u).
\end{multline*}
Now we use Theorem \ref{th1.3} with $\delta=\frac{1}{2}$, $\sigma=\frac{1}{4}$ and $l=\frac{2 q p^2}{q-p+1}$ for the function $|u|^{q-1}u-|\mu^-|^{q-1}\mu^-$ in the cylinder $Q_{r, b\,\omega^{q-p+1} r^p}$. Using \eqref{eq4.2} we obtain for all $t\in (t_0-b \omega^{q-p+1} r^p, t_0)$
\begin{multline*}
\frac{1}{\gamma}\,b^{\frac{N}{p}}\,\omega^q\leqslant \frac{1}{\gamma}\fint\limits_{B_{\frac{3}{4} r}(x_0)\cap\{u(\cdot, s)\geqslant \mu^-+\frac{1}{4}\omega\}}(u(x, s)-\mu^-)^q\,dx\leqslant\\\leqslant 
\gamma \fint\limits_{B_{\frac{3}{4} r}(x_0)\times\{s\}}\Big[|u|^{q-1}u-|\mu^-|^{q-1}\mu^-\Big]\,dx\leqslant\\\leqslant
\gamma
\fint\limits_{B_r(x_0)\times\{t\}}\Big[|u|^{q-1}u-|\mu^-|^{q-1}\mu^-\Big]\,dx +\gamma \omega^q\,b^{\frac{q}{q-p+1}}\leqslant\\\leqslant
\gamma\,\omega^{q-1}\fint\limits_{B_r(x_0)\times\{t\}}(u-\mu^-)\,dx+\gamma \omega^q\,b^{\frac{q}{q-p+1}}\leqslant\\\leqslant
\gamma \Big[\epsilon+b^{\frac{q}{q-p+1}}\Big]\,\omega^q+\gamma \omega^q\frac{|B_r(x_0)\cap\{u(\cdot, t)-\mu^-\geqslant \epsilon \omega\}|}{|B_r(x_0)|},
\end{multline*}
choosing $b$ from the condition $\gamma b^{\frac{q}{q-p+1}}=\frac{1}{4\gamma}b^{\frac{N}{p}}$, that is $b^{\frac{qp+N(p-q-1)}{p(q-p+1)}}=\frac{1}{4\gamma}$, and then assuming that  $\epsilon\in (0, 1)$ is so small that $\gamma\,\epsilon\leqslant \frac{1}{4\gamma}b^{\frac{N}{p}}$, from the previous we arrive at
\begin{equation}\label{eq4.8}
|B_r(x_0)\cap\{u(\cdot, t)-\mu^-\geqslant \epsilon \omega\}|\geqslant \alpha |B_r(x_0)|,\quad \alpha=\frac{1}{2\gamma} b^{\frac{N}{p}},
\end{equation}
for all $t\in(t_0-b \omega^{q-p+1} r^p, t_0)$.  Theorem \ref{th1.4} ensures the existence
of $\epsilon_*$, $\xi^{(1)}_*\in (0, 1)$ depending only on the data such that
$$u \geqslant \xi^{(1)}_*\,\omega,\quad \text{in}\quad Q_{\frac{1}{2} r, \frac{1}{4} b \omega^{q-p+1} r^p},$$
provided that the first inequality in \eqref{eq4.2} holds. And hence, by the second inequality in \eqref{eq4.2}
\begin{equation}\label{eq4.9}
\osc\limits_{Q_{\frac{1}{2} r, \frac{1}{4} b \omega^{q-p+1} r^p}} u\leqslant \mu^+ -\xi^{(1)}_*\,\omega \leqslant (1+\eta_0-\xi^{(1)}_*)\,\omega \leqslant (1-\frac{1}{2}\,\xi^{(1)}_*)\,\omega,
\end{equation}
provided that $\eta_0\leqslant \frac{1}{2}\,\xi^{(1)}_*.$

\subsection{Proof of Proposition \ref{pr1.1} Under Condition \ref{eq4.3}}

Under condition \eqref{eq4.3} the proof  is completely similar to the previous one, only we need to change the alternatives: either
\begin{equation}\label{eq4.10}
\big|Q_{\frac{3}{4} r, b\omega^{q-p+1} r^p}\cap\big\{u\leqslant \mu^-+\frac{1}{4}\omega\big\}\big|\leqslant \nu|Q_{\frac{3}{4}r, b\omega^{q-p+1}r^p}|,
\end{equation}
or
\begin{multline}\label{eq4.11}
\big|Q_{\frac{3}{4} r, b\omega^{q-p+1} r^p}\cap\big\{u\leqslant \mu^+-\frac{1}{4}\omega\big\}\big|\geqslant \big|Q_{\frac{3}{4} r, b\omega^{q-p+1} r^p}\cap\big\{u\leqslant \mu^-+\frac{1}{4}\omega\big\}\big|\geqslant\\\geqslant \nu|Q_{\frac{3}{4}r, b\omega^{q-p+1}r^p}|,
\end{multline}
where  $\nu=\frac{1}{\gamma}\,b^{\frac{N}{p}}$ is the number defined in Remark $2.1$.
Using a De Giorgi type lemma, Lemma \ref{lem2.4}, from \eqref{eq4.10} we obtain
\begin{equation}\label{eq4.12}
\osc\limits_{Q_{\frac{1}{2} r, \frac{1}{2} b\omega^{q-p+1}r^p}} u\leqslant \frac{7}{8}\,\omega.
\end{equation}
Under condition \eqref{eq4.11} there holds 
$$\big|B_{\frac{3}{4} r}(x_0)\cap\big\{u(\cdot, s)\leqslant \mu^+-\frac{1}{4} \omega\big\}\big|\geqslant \nu |B_{\frac{3}{4} r}(x_0)|,$$
with some $s \in (t_0-b\omega^{q-p+1}(\frac{3}{4} r)^p, t_0).$ Completely similar to \eqref{eq4.8}, using Theorem \ref{th1.3} for the function
$|\mu^+|^{q-1}\mu^+-|u|^{q-1}u$ in the cylinder $Q_{r, b\,\omega^{q-p+1} r^p}$ and choosing $b$, $\epsilon$ small enough we obtain for all $t\in (t_0-b\omega^{q-p+1} r^p, t_0)$
\begin{equation}\label{eq4.13}
\big|B_r(x_0)\cap\big\{\mu^+-u(\cdot, t)\geqslant \epsilon\,\omega\big\}\big|\geqslant \alpha |B_r(x_0)|,\quad \alpha=\frac{1}{\gamma} b^{\frac{N}{p}}.
\end{equation}
Choose $\epsilon_*\in (0, 1)$ as in Theorem \ref{th1.4}, by the first inequality in \eqref{eq4.3} this theorem ensures the existence
of $\xi^{(2)}_*\in (0, 1)$ depending only on the data such that
$$-u \geqslant \xi^{(2)}_*\,\omega,\quad \text{in}\quad Q_{\frac{1}{2} r, \frac{1}{4} b \omega^{q-p+1} r^p}.$$
And hence, by the second inequality in \eqref{eq4.3}
\begin{equation}\label{eq4.14}
\osc\limits_{Q_{\frac{1}{2} r, \frac{1}{4} b \omega^{q-p+1} r^p}} u \leqslant -\mu^--\xi^{(2)}_*\,\omega\leqslant (1+\eta_0-\xi^{(2)}_*)\omega \leqslant (1-\frac{1}{2}\xi^{(2)}_*)\,\omega,
\end{equation}
provided that $\eta_0\leqslant \frac{1}{2}\,\xi^{(2)}_*.$

\subsection{Proof of Propostion \ref{pr1.1} Under Condition \eqref{eq4.4}}

Condition \eqref{eq4.4} guarantees that both $\mu^+$ and $\mu^-$ satisfy
\begin{equation}\label{eq4.15}
\epsilon_*\,\omega \leqslant \pm \mu^{\pm}\leqslant (1+\eta_0)\,\omega\leqslant 2\,\omega,
\end{equation}
so, we can use the expansion of positivity, Lemma \ref{lem2.7}. For this define $\bar{\epsilon}=\big(\frac{b}{B}\big)^{\frac{1}{p}}$,
where $B>1$ is the number depending only on the data to be defined and construct the cylinder
$$Q_{\bar{r}, B\omega^{q-p+1} \bar{r}^p}:=B_{\bar{r}}(x_0)\times(t_0-B\omega^{q-p+1}\bar{r}^p, t_0)\subset Q_{r, b\,\omega^{q-p+1} r^p},
\quad \bar{r}:=\bar{\epsilon}\,r.$$
We assume that one of the alternative cases holds: either
\begin{equation}\label{eq4.16}
\big|B_{\bar{r}}(x_0)\cap\big\{\mu^+-u(\cdot, t_0-B\,\omega^{q-p+1}\bar{r}^p)\geqslant \frac{1}{2}\epsilon_*\,\omega\big\}\big|\geqslant \frac{1}{2}|B_{\bar{r}}(x_0)|,
\end{equation}
or
\begin{multline}\label{eq4.17}
\big|B_{\bar{r}}(x_0)\cap\big\{u(\cdot, t_0-B\,\omega^{q-p+1}\bar{r}^p)-\mu^-\geqslant \frac{1}{2}\epsilon_*\,\omega\big\}\big|\geqslant\\
\geqslant
\big|B_{\bar{r}}(x_0)\cap\big\{\mu^+-u(\cdot, t_0-B\,\omega^{q-p+1}\bar{r}^p)\leqslant \frac{1}{2}\epsilon_*\,\omega\big\}\big|\geqslant \frac{1}{2}|B_{\bar{r}}(x_0)|.
\end{multline}
Let us assume, for example \eqref{eq4.16}, the case of \eqref{eq4.17} is analogous. The number $\epsilon_*$ was defined  depending only on the data, so by \eqref{eq4.15}, \eqref{eq4.16}, Lemma \ref{lem2.7} ensures the existence of $B>1$ and $\eta_1 \in (0, 1)$ depending only on the data  such that
\begin{equation*}
u\leqslant \mu^+-\eta_1\,\omega,\quad \text{in}\quad Q_{\frac{1}{2}\bar{r}, \frac{1}{2}B \omega^{q-p+1} \,\bar{r}^p},
\end{equation*}
that is 
\begin{equation}\label{eq4.18}
\osc\limits_{Q_{\frac{1}{2}\bar{r}, \frac{1}{2}b \omega^{q-p+1} \,\bar{r}^p}} u\leqslant
\osc\limits_{Q_{\frac{1}{2}\bar{r}, \frac{1}{2}B \omega^{q-p+1} \,\bar{r}^p}} u\leqslant (1-\eta_1)\,\omega.
\end{equation}

\subsection{Proof of Proposition \ref{pr1.1} Concluded}
Collecting \eqref{eq4.7}, \eqref{eq4.9},  \eqref{eq4.14}, \eqref{eq4.18}  and choosing $\epsilon_0:=\frac{1}{4}\bar{\epsilon} $, $\sigma_0:=\max(1-\frac{1}{2}\xi^{(1)}_*, 1-\frac{1}{2}\xi^{(2)}_*, 1-\eta_1)$, $\eta_0:=\frac{1}{2}\,\min (\xi^{(1)}_*, \xi^{(2)}_*)$,  we arrive at
\begin{equation*}
\osc\limits_{Q_{\epsilon_0 r, b\, (\sigma_0 \omega)^{q-p+1} \,(\epsilon_0 r)^p}} u\leqslant \sigma_0\,\omega,
\end{equation*}
which completes the proof of Proposition \ref{pr1.1}.

\section{H\"{o}lder Continuity, Proof of Theorem \ref{th1.1}}

 The proof in both cases $q<p-1$ and $p<2$, as well as $q>p-1$ and $p>2$ is almost the same and is based on Propositon \ref{pr1.1}, so we present it simultaneously.
 
Fix $(x_0, t_0)\in \Omega_T$, let $M:=\sup\limits_{\Omega_T}|u|$ and construct the cylinder $Q_{2R}\subset \Omega_T$
$$
Q_{R}:=\begin{cases}
B_R(x_0)\times (t_0-R^{q+1}, t_0),\quad \text{if}\quad q<p-1,\quad p<2,\\
B_R(x_0)\times (t_0-(2M)^{q-p+1} R^p, t_0),\quad \text{if}\quad q>p-1,\quad p>2.
\end{cases}
$$
If for all $\rho \in (0, R]$
$$\osc\limits_{Q_\rho} u\leqslant \,\rho,$$
 then there is nothing to prove. Assume that there exists $\rho_0\in (0, R]$ such that
\begin{equation}\label{eq5.1}
\osc\limits_{Q_{\rho_0}} u\geqslant \,\rho_0.
\end{equation}
In this case set $\mu^+_0:=\sup\limits_{Q_{\rho_0}} u$, $\mu^-_0:=\inf\limits_{Q_{\rho_0}} u$ and $\omega_0:=2\max(\mu^+_0, -\mu^-_0)$,
by \eqref{eq5.1} 
$$\omega_0\geqslant \osc\limits_{Q_{\rho_0}} u\geqslant \rho_0,$$
and hence
$$Q_{\rho_0, b\,\omega^{q-p+1}_0\rho^p_0}=B_{\rho_0}(x_0)\times (t_0-b\,\omega^{q-p+1}_0 \rho^p_0, t_0)\subset Q_{\rho_0},$$
with $b\in (0, 1)$ to be defined.
By our choices
\begin{equation}\label{eq5.2}
\max(\mu^+_0, -\mu^-_0)=\frac{1}{2}\,\omega_0\,< (1+\eta_0)\,\omega_0,
\end{equation}
where $\eta_0\in (0, 1]$ to be defined.

\subsection{Reduction to Oscillation Near Zero}

Proposition \ref{pr1.1} ensures the existence of $b$, $\epsilon_0$, $\eta_0$, $\sigma_0 \in (0, 1)$ such that
\begin{equation}\label{eq5.3}
\osc\limits_{Q_{r_1, b\,\omega^{q-p+1}_1 r^p_1}} u\leqslant \omega_1,\quad r_1=\epsilon_0\,\rho_0,\quad \omega_1=\sigma_0\,\omega_0.
\end{equation}
We can repeat the previous arguments assuming
on each step that 
\begin{equation}\label{eq5.4}
\max\big(\mu^+_{j-1}, -\mu^-_{j-1}\big)\leqslant (1+\eta_0)\,\omega_{j-1}
\end{equation}
and obtain
\begin{equation}\label{eq5.5}
\osc\limits_{Q_{r_j, b\,\omega^{q-p+1}_j r^p_j}} u\leqslant \omega_j,\quad r_j=\epsilon^j_0\,\rho_0,\quad \omega_j=\sigma^j_0\,\omega_0.
\end{equation}

\subsection{Reduction to Oscillation Away From Zero}

Assume now that $j_0\geqslant 1$ is the first number such that 
\begin{equation}\label{eq5.6}
\begin{cases}
\osc\limits_{Q_{r_{j_0}, b\,\omega^{q-p+1}_{j_0} r^p_{j_0}}} u\leqslant \omega_{j_0},\\
\text{and}\\
\max\big(\mu^+_{j_0}, -\mu^-_{j_0}\big)\geqslant (1+\eta_0)\,\omega_{j_0}.
\end{cases}
\end{equation}
Inequalities \eqref{eq5.6} yield
$$
\begin{cases}
\mu^-_{j_0}\geqslant \frac{\eta_0}{1+\eta_0}\mu^+_{j_0}\geqslant \eta_0\,\omega_{j_0},\quad \text{if}\quad \mu^+_{j_0}=\max(\mu^+_{j_0}, -\mu^-_{j_0}),\\
-\mu^+_{j_0}\geqslant -\frac{\eta_0}{1+\eta_0} \mu^-_{j_0}\geqslant \eta_0\,\omega_{j_0},\quad \text{if}\quad -\mu^-_{j_0}=\max(\mu^+_{j_0}, -\mu^-_{j_0}).
\end{cases}
$$
By the fact that the second inequality in \eqref{eq5.6} is violated for $j=j_0-1$ (for $j=0$ this inequality is violated by \eqref{eq5.2}), we have
\begin{multline}\label{eq5.7}
\eta_0\,\omega_{j_0}\leqslant \frac{\eta_0}{1+\eta_0}\,\mu^+_{j_0}\leqslant \mu^-_{j_0}\leqslant \mu^+_{j_0-1}\leqslant 
\max(\mu^+_{j_0-1}, -\mu^-_{j_0-1})\leqslant\\\leqslant (1+\eta_0) \omega_{j_0-1}=\frac{1+\eta_0}{\sigma_0}\omega_{j_0},\qquad \text{if}\quad
\mu^+_{j_0}=\max(\mu^+_{j_0}, -\mu^-_{j_0}).
\end{multline}
Similarly,
\begin{multline}\label{eq5.8}
\eta_0\,\omega_{j_0}\leqslant -\frac{\eta_0}{1+\eta_0}\mu^-_{j_0}\leqslant -\mu^+_{j_0}\leqslant -\mu^-_{j_0}\leqslant -\mu^-_{j_0-1}\leqslant \max(\mu^+_{j_0-1}, -\mu^-_{j_0-1})\leqslant\\\leqslant (1+\eta_0)\, \omega_{j_0-1}=\frac{1+\eta_0}{\sigma_0}\omega_{j_0},\qquad \text{if}\quad
-\mu^-_{j_0}=\max(\mu^+_{j_0}, -\mu^-_{j_0}).
\end{multline}
Let us consider the case $\mu^+_{j_0}=\max(\mu^+_{j_0}, -\mu^-_{j_0})$, the  case $-\mu^-_{j_0}=\max(\mu^+_{j_0}, -\mu^-_{j_0})$ is analogous. By \eqref{eq5.7}  $u$ is a non-negative 
solution to \eqref{eq1.1}, \eqref{eq1.2} in the cylinder $Q_{r_{j_0}, b\omega^{q-p+1}_{j_0} r^p_{j_0}}$ such that
\begin{equation}\label{eq5.9}
\eta_0\,\omega_{j_0}\leqslant \mu^-_{j_0}\leqslant u\leqslant\mu^+_{j_0}\leqslant \frac{(1+\eta_0)^2}{\eta_0 \sigma_0}\,\omega_{j_0} \quad \text{in}\quad Q_{r_{j_0}, b\omega^{q-p+1}_{j_0} r^p_{j_0}}.
\end{equation}
Introduce the change of variables and new unknown function (for the details we refer the reader, for example to \cite[Section 4.5]{Bog2} if $p>2$ and \cite[Section 4.5]{Lia} if $p<2$ )
$$\tau=\frac{t}{\omega^{q-p+1}_{j_0}},\qquad v=\Big(\frac{u}{\omega_{j_0}}\Big)^q,$$
then $v$ is a non-negative solution to the  parabolic $p$-Laplacian ($p>2$  or $p<2$)
\begin{equation}\label{eq5.10}
\begin{cases}
v_\tau-div\tilde{\mathbf{A}}(x, \tau, v, D v)=0,\quad (x, \tau)\in Q_{r_{j_0}, b\, r^p_{j_0}},\\
\tilde{\mathbf{A}}(x, \tau, v, D v)\,D v\geqslant \tilde{K}_1\,|D v|^p,\,\,\,\tilde{K}_1=\tilde{K}_1(K_1, \sigma_0, \eta_0)>0,\\
|\tilde{\mathbf{A}}(x, \tau, v, D v)|\leqslant \tilde{K}_2\,|D v|^{p-1},\,\,\,\tilde{K}_2=\tilde{K}_2(K_2, \sigma_0, \eta_0)>0,\\
\eta_0^q\leqslant v\leqslant \frac{(1+\eta_0)^{2q}}{(\eta_0 \sigma_0)^q}.
\end{cases}
\end{equation}
Moreover, using the first inequality in \eqref{eq5.6}, by the last inequality in \eqref{eq5.10} and by the evident fact that 
\begin{equation*}
a^q-b^q\leqslant
\begin{cases}
(a-b)^q,\quad a\geqslant b\geqslant 0,\quad 0<q\leqslant1,\\
q(a-b)(a^{q-1}+b^{q-1}),\quad a\geqslant b\geqslant 0,\quad q>1,
\end{cases}
\end{equation*}
we obtain
\begin{equation*}
\osc\limits_{Q_{r_{j_0}, r^p_{j_0}}} v\leqslant \frac{q+1}{\omega^q_{j_0}}\max\Big[\omega^q_{j_0}, \omega_{j_0}\,\Big([\mu^-_{j_0}]^{q-1}+[\mu^+_{j_0}]^{q-1}\Big) \Big]\leqslant \gamma_0(\sigma_0, \eta_0):=\tilde{\omega}_{j_0}.
\end{equation*}
Choosing
$$
\tilde{r}_{j_0}:=
\min\big(1, \frac{1}{[\gamma_0(\sigma_0, \eta_0)]^{\frac{2-p}{p}}}\big)\,r_{j_0},
$$
from the previous we obtain
\begin{equation}\label{eq5.11}
\osc\limits_{Q_{\tilde{r}_{j_0}, b\,\tilde{\omega}^{2-p}_{j_0} \tilde{r}^p_{j_0}}} v\leqslant \osc\limits_{Q_{r_{j_0}, r^p_{j_0}}} v\leqslant \tilde{\omega}_{j_0}.
\end{equation}
By \cite[Chapters 3, 4]{DiB1} inequality \eqref{eq5.11} ensures the existence of $\tilde{\epsilon}$, $\tilde{\sigma}\in (0, 1)$
such that 
\begin{equation}\label{eq5.12}
\begin{cases}
\tilde{r}_{j_0+j}:=\tilde{\epsilon}^j \tilde{r}_{j_0},\quad \tilde{\omega}_{j_0+j}:=\tilde{\sigma}^j\,\tilde{\omega}_{j_0},\\
\osc\limits_{Q_{\tilde{r}_{j_0+j}, b\,\tilde{\omega}^{2-p}_{j_0+j} \tilde{r}^p_{j_0+j}}} v\leqslant \tilde{\omega}_{j_0+j},\qquad j\geqslant 0.
\end{cases}
\end{equation}
Returning to the original variables, redefining $r_{j_0+j}=\tilde{\epsilon}^j_0 r_{j_0}$ with $\tilde{\epsilon}_0:=\tilde{\epsilon}_0(\sigma_0, \eta_0, \tilde{\epsilon})\in (0, 1)$ and choosing $\omega_{j_0+j}:=\gamma_0(\sigma_0, \eta_0)\,\tilde{\sigma}^j\,\omega_{j_0+j}$, from \eqref{eq5.12} we get
\begin{equation}\label{eq5.13}
\osc\limits_{Q_{r_{j_0+j}, b\omega^{q-p+1}_{j_0+j} r^p_{j_0+j}}} u\leqslant \omega_{j_0+j},\qquad j\geqslant 0.
\end{equation}

\subsection{The Final Argument}
To complete the proof of the H\"{o}lder continuity we collect estimates \eqref{eq5.5}, \eqref{eq5.13}. Redefining $r_j$, $\omega_j$
we arrive at 
\begin{equation}\label{eq5.14}
\begin{cases}
\epsilon:=\epsilon_0 \tilde{\epsilon}_0 \min\big(1, \frac{1}{[\gamma_0(\sigma_0, \eta_0)]^{\frac{q-p+1}{p}}}\big),\quad \sigma:=\max(\sigma_0, \tilde{\sigma})\in (0, 1),\quad \omega'_0:=\gamma_0(\sigma_0, \eta_0)\omega_0,\\
r_j:=\epsilon^j\,\rho_0,\quad\omega_j:=\sigma^j\,\omega'_0,\qquad
\osc\limits_{Q_{r_j, b \,\omega_j^{q-p+1} r^p_j}} u\leqslant \omega_j,\quad j\geqslant 0.
\end{cases}
\end{equation}
The H\"{o}lder continuity of $u$ follows from \eqref{eq5.14}
by standard iterative arguments, see, for example \cite[Chapter 3]{DiB1}, this completes the proof of
Theorem \ref{th1.1}.

\section{Appendix A. Proof of Theorems \ref{th1.2}, \ref{th1.3}}

The beginning of the proof of Theorems \ref{th1.2} and \ref{th1.3} is almost the same, so,  consider the cutoff function $\zeta(x)$, defined 
in Theorems \ref{th1.2}, \ref{th1.3}. Assume without loss that $s=0$, test identity \eqref{eq2.1} by $\zeta^l(x)$ with $l>0$ large enough,
and obtain for any $t_1$, $t_2\in (0, t)$
\begin{multline}\label{eq6.1}
\int\limits_{B_{r}(y)\times\{t_1\}}\pm\big(|\mu^{\pm}|^{q-1}\mu^{\pm}-|u|^{q-1}u\big)\zeta^l(x)\,dx\leqslant
q\int\limits_{B_{r}(y)\times\{t_1\}}\int\limits^{\pm(\mu^{\pm}-u)}_0\,|z\mp \mu^{\pm}|^{q-1}\,dz\,\zeta^l(x)\,dx\leqslant\\\leqslant
q\int\limits_{B_{r}(y)\times\{t_2\}}\int\limits^{\pm(\mu^{\pm}-u)}_0\,|z\mp \mu^{\pm}|^{q-1}\,dz\,\zeta^l(x)\,dx+
\frac{\gamma l}{\sigma r}\int\limits^t_0\int\limits_{B_{r}(y)}|D u|^{p-1} \zeta^{l-1}(x)\,dx\,d\tau\leqslant\\\leqslant\int\limits_{B_{r}(y)\times\{t_2\}}\pm\big(|\mu^{\pm}|^{q-1}\mu^{\pm}-|u|^{q-1}u\big)\zeta^l(x)\,dx+
\frac{\gamma l}{\sigma r}\int\limits^t_0\int\limits_{B_{r}(y)}|D u|^{p-1} \zeta^{l-1}(x)\,dx\,d\tau.
\end{multline}

\subsection{Proof of Theorem \ref{th1.2}, Case $q<p-1$ and $p<2$}

Let us  estimate the second integral on the right-hand side of \eqref{eq6.1}, for this note that by our choices $q<1$.
\begin{lemma}\label{lem6.1}
For any $\delta\in (0, 1)$ there holds
\begin{multline}\label{eq6.2}
\frac{\gamma l}{\sigma r}\int\limits^t_0\fint\limits_{B_{r}(y)}|D u|^{p-1} \zeta^{l-1}(x)\,dx\,d\tau\leqslant \\\leqslant \delta\,\sup\limits_{0\leqslant\tau\leqslant t}\mathcal{I}(\tau)+\frac{\gamma(l)}{\delta^\gamma \sigma^\gamma}\big[\max(\mu^+, -\mu^-)\big]^{\frac{(1-q)(p-1)}{2-p}}
\Big(\frac{t}{r^p}\Big)^{\frac{1}{2-p}},
\end{multline}
where
$$\mathcal{I}(\tau):=\fint\limits_{B_{r}(y)\times\{\tau\}}
\pm\big(|\mu^{\pm}|^{q-1}\mu^{\pm}-|u|^{q-1}u\big)\zeta^l(x)\,dx.$$
\end{lemma}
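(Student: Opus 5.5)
Write $w:=\pm(\mu^{\pm}-u)\geqslant 0$ and $M:=\max(\mu^+,-\mu^-)$. The plan is the standard Harnack-type argument for the singular $p$-Laplacian (as in \cite{DiB1}), adapted to the quantity $\mathcal I(\tau)$. Lemma \ref{lem2.1} and $w\leqslant |\mu^\pm|+|u|\leqslant 2M$ with $q<1$ give
$$\frac1\gamma M^{q-1}w\leqslant \pm\big(|\mu^{\pm}|^{q-1}\mu^{\pm}-|u|^{q-1}u\big)\leqslant \gamma\, w^q .$$
So it suffices to bound $\int\fint|Du|^{p-1}\zeta^{l-1}$ by a power of $\sup_\tau\fint w\,\zeta^l$, multiplied by $M^{1-q}$.

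\textbf{Step 1: weighted energy estimate.} Test the equation with $\varphi=\zeta^{l}(x)\,(w+\varepsilon)^{-\beta}\,\cdots$; more precisely, use a function whose time part produces $\int_0^{w}|z\mp\mu^\pm|^{q-1}(z+\varepsilon)^{-\beta}\,dz$, with $\beta\in(0,1)$ (e.g. $\beta=\frac{2-p}{2}$) and $\varepsilon>0$ to be optimized. Because $-Du=\pm Dw$ and the exponent is negative, the diffusion term has a good sign and yields
$$\iint |Dw|^p (w+\varepsilon)^{-1-\beta}\zeta^l .$$
The time term is controlled by $\sup_\tau\int (w+\varepsilon)^{1-\beta}M^{q-1}$-type quantities. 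Here the weight $|z\mp\mu^\pm|^{q-1}\geqslant (2M)^{q-1}$ is bounded below when $q<1$, but it is not bounded above, since it is singular where $u=0$. So we estimate it using the integrated form $\int_0^w|z\mp\mu^\pm|^{q-1}dz\leqslant \gamma w^q$ and $(w+\varepsilon)^{-\beta}\leqslant \varepsilon^{-\beta}$, or more carefully by splitting. The cutoff term is $\gamma\iint (w+\varepsilon)^{p-1-\beta}|D\zeta|^p\zeta^{l-p}$.

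\textbf{Step 2: Hölder splitting.} Write
$$|Du|^{p-1}\zeta^{l-1}=\big[|Dw|^{p-1}(w+\varepsilon)^{-(1+\beta)(p-1)/p}\zeta^{l(p-1)/p}\big]\,(w+\varepsilon)^{(1+\beta)(p-1)/p}\zeta^{l/p-1}.$$
By Hölder with exponents $\frac p{p-1}$ and $p$, the first factor is bounded through Step 1. The second factor gives $\big(\iint (w+\varepsilon)^{(1+\beta)(p-1)}\zeta^{l-p}\big)^{1/p}$. Since $(1+\beta)(p-1)\leqslant 1$ for the chosen $\beta$ and $p<2$, Jensen bounds this by powers of $\sup_\tau\fint (w+\varepsilon)\zeta^{l'}$. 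The condition $l\geqslant\frac{2p^2}{2-p}$ is exactly what makes the exponents of $\zeta$ consistent in these Hölder and Jensen steps. Then convert $\fint w\zeta^l\leqslant \gamma M^{1-q}\mathcal I$.

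\textbf{Step 3: collect and absorb.} The result has the form
$$\frac{\gamma}{\sigma^\gamma}\,\frac{t}{r^p}\,\big(M^{1-q}(\sup\mathcal I+\ldots)+\varepsilon\big)^{p-1}\ldots .$$
Choose $\varepsilon\approx M^{1-q}\sup\mathcal I$, or smaller. The right side then becomes $\gamma\sigma^{-\gamma}(t/r^p)\,[M^{1-q}]^{p-1}(\sup\mathcal I)^{p-1}$, where the $\mathcal I$-exponent is $p-1<1$. Young's inequality with exponents $\frac1{p-1}$ and $\frac1{2-p}$ gives
$$\delta\sup\mathcal I+\gamma\delta^{-\gamma}\sigma^{-\gamma}M^{\frac{(1-q)(p-1)}{2-p}}\Big(\frac t{r^p}\Big)^{\frac1{2-p}},$$
which is \eqref{eq6.2}; the homogeneity check confirms these exponents.

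\textbf{Main obstacle.} The main obstacle is Step 1: keeping the time term under control in terms of $\mathcal I$ despite the non-constant weight $|z\mp\mu^\pm|^{q-1}$, which is singular near $u=0$ and is not comparable to $w^{q-1}$. I would handle it via Lemma \ref{lem2.1}, bounding $\int_0^w|z\mp\mu^\pm|^{q-1}(z+\varepsilon)^{-\beta}dz\leqslant \varepsilon^{-\beta}\cdot\gamma(\pm(|\mu^\pm|^{q-1}\mu^\pm-|u|^{q-1}u))$, which is directly an $\mathcal I$-type quantity. This bound is what motivates the choice of $\varepsilon$ tied to $\sup\mathcal I$.
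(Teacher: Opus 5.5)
Your plan is essentially the paper's proof. The shared steps are:
\begin{itemize}
\item a test function with a negative power of $w+\varepsilon$ (the paper's $(w+\epsilon)^{1-\beta}$ with $\beta>1$ is your power with $\beta$ shifted by one);
\item the bound of the time term by $\varepsilon^{-\beta}\,\pm(|\mu^{\pm}|^{q-1}\mu^{\pm}-|u|^{q-1}u)$;
\item the H\"older splitting of $|Du|^{p-1}\zeta^{l-1}$;
\item Jensen via $(1+\beta)(p-1)<1$ together with Lemma \ref{lem2.1};
\item a final Young inequality.
\end{itemize}
The paper also inserts time weights $(\tau/t)^{\frac{1}{2(p-1)}}$, which are not needed here because the initial-time term has a favorable sign. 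It also fixes $\epsilon=(\mathcal{M}^{1-q}t/r^p)^{\frac{1}{2-p}}$, which merges the time and cutoff contributions before a single Young step, whereas you take $\varepsilon\approx M^{1-q}\sup\mathcal{I}$.

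With your choice the right side is not a single term as Step 3 states. Besides $\frac{t}{r^p}M^{(1-q)(p-1)}(\sup\mathcal{I})^{p-1}$, which comes from the cutoff term, the time term contributes $\big(\frac{t}{r^p}\big)^{\frac1p}M^{\frac{(1-q)(p-1)}{p}}(\sup\mathcal{I})^{\frac{2(p-1)}{p}}$. Young's inequality absorbs both into the same final bound. Your ``or smaller'' is wrong, though: taking $\varepsilon$ much smaller than $M^{1-q}\sup\mathcal{I}$ makes the factor $\varepsilon^{-\beta}\sup\mathcal{I}$ blow up.
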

\begin{proof}
Set $\mathcal{M}:=\max(\mu^+, -\mu^-)$, $\epsilon:=\big(\mathcal{M}^{1-q}\frac{t}{r^p}\big)^{\frac{1}{2-p}}$ and choose  $\beta$ such that
$$1<\beta=\frac{1}{2}\big(1+\min (p, \frac{1}{p-1})\big) <\min\big(p, \frac{1}{p-1}\big).$$
By the H\"{o}lder inequality we obtain
\begin{multline}\label{eq6.3}
\int\limits^t_0\fint\limits_{B_{r}(y)}|D u|^{p-1} \zeta^{l-1}(x)\,dx\,d\tau\leqslant
\bigg(\int\limits^t_0\fint\limits_{B_{r}(y)}\Big(\frac{\tau}{t}\Big)^{\frac{1}{2(p-1)}} \frac{|D u|^{p}\,\,\zeta^{l+p}(x)}{\big[\pm(\mu^{\pm}-u)+\epsilon\big]^{\beta}} \,dx\,d\tau\bigg)^{\frac{p-1}{p}}\times\\\times
\bigg(\int\limits^t_0\fint\limits_{B_{r}(y)}\Big(\frac{t}{\tau}\Big)^{\frac{1}{2}}\Big[\pm(\mu^{\pm}-u)+\epsilon\Big]^
{\beta (p-1)}\zeta^{l-p^2}(x)\,dx d\tau\bigg)^{\frac{1}{p}}.
\end{multline}
By the evident fact that $\max(|\mu^+|, |\mu^-|)=\max(\mu^+, -\mu^-)$, the algebraic lemma, Lemma \ref{lem2.1} yields
\begin{equation}\label{eq6.4}
\pm(\mu^{\pm}-u)=\pm(\mu^{\pm}-u)[|u|+|\mu^{\pm}|]^{q-1}[|u|+|\mu^{\pm}|]^{1-q}\leqslant \gamma \mathcal{M}^{1-q}\big[\pm\big(|\mu^{\pm}|^{q-1}\mu^{\pm}-|u|^{q-1}u\big)\big],
\end{equation}
so, using the H\"{o}lder inequality and the fact that $\beta (p-1)< 1$,
we estimate the second integral on the right-hand side of \eqref{eq6.3} as follows
\begin{multline}\label{eq6.5}
\int\limits^t_0\fint\limits_{B_{ r}(y)}\Big(\frac{t}{\tau}\Big)^{\frac{1}{2}}\Big[\pm(\mu^{\pm}-u)+\epsilon\Big]^{\beta(p-1)}\zeta^{l-p^2}(x)\,dx d\tau\leqslant\\
\leqslant\gamma \int\limits^t_0\fint\limits_{B_{r}(y)}\Big(\frac{t}{\tau}\Big)^{\frac{1}{2}}\Big[\pm(|\mu^{\pm}|^{q-1}\mu^{\pm}-|u|^{q-1}u)\,
\mathcal{M}^{1-q}+\epsilon\Big]^{\beta(p-1)}\zeta^{l-p^2}(x)\,dx d\tau\leqslant\\\leqslant \gamma\, t\,\mathcal{M}^{(1-q)\beta(p-1)}\,\sup\limits_{0\leqslant \tau\leqslant t}\fint\limits_{B_{r}(y)\times\{\tau\}}
\Big[\pm\big(|\mu^{\pm}|^{q-1}\mu^{\pm}-|u|^{q-1}u\big)+\epsilon \mathcal{M}^{q-1}\Big]^{\beta(p-1)}\zeta^{l-p^2}(x)\,dx \leqslant\\
\leqslant \gamma\,t\,\mathcal{M}^{(1-q)\beta(p-1)}\bigg(\sup\limits_{0\leqslant \tau\leqslant t}\fint\limits_{B_{r}(y)\times\{\tau\}}
\Big[\pm\big(|\mu^{\pm}|^{q-1}\mu^{\pm}-|u|^{q-1}u\big)+\epsilon \mathcal{M}^{q-1}\Big]\zeta^{\frac{l-p^2}{\beta(p-1)}}(x)\,dx \bigg)^{\beta(p-1)}\leqslant\\\leqslant  
\gamma\,t\,\mathcal{M}^{(1-q)\beta(p-1)}\bigg(\sup\limits_{0\leqslant \tau\leqslant t}\fint\limits_{B_{r}(y)\times\{\tau\}}
\Big[\pm\big(|\mu^{\pm}|^{q-1}\mu^{\pm}-|u|^{q-1}u\big)+\epsilon \mathcal{M}^{q-1}\Big]\zeta^{l}(x)\,dx \bigg)^{\beta(p-1)}
\leqslant\\\leqslant
\gamma\,t\,\mathcal{M}^{(1-q)\beta(p-1)}\Big(\sup\limits_{0\leqslant \tau\leqslant t}\mathcal{I}(\tau)+\epsilon \mathcal{M}^{q-1}\Big)^{\beta(p-1)},
\end{multline}
provided that
$$\frac{l-p^2}{\beta(p-1)}\geqslant l,\quad \text{i. e.}\quad l\geqslant \frac{2 p^2}{2-p}>\frac{p^2}{1-\beta(p-1)}.$$
To estimate the first integral on the right-hand side of \eqref{eq6.3} we test \eqref{eq2.1} by
$$\Big(\frac{\tau}{t}\Big)^{\frac{1}{2(p-1)}} \big[\pm(\mu^{\pm}-u)+\epsilon\big]^{1-\beta}\,\zeta^{l+p}(x),$$
the use of such a test function is justified,  by making use of the
exponential time mollification  $\llbracket u \rrbracket_h$( see \cite[Chapter 7.1]{Bog3}  for non-negative solutions, the sign-changing case can be considered completely analogous).  By the Young inequality, conditions \eqref{eq1.2}, \eqref{eq6.4} and using the fact that $p-1< 1<
\beta <p$ we obtain
\begin{multline}\label{eq6.6}
\gamma\,\int\limits^t_0\fint\limits_{B_{r}(y)}\Big(\frac{\tau}{t}\Big)^{\frac{1}{2(p-1)}} \frac{|D u|^{p}\zeta^{l+p}(x)}{\big[\pm(\mu^{\pm}-u)+\epsilon\big]^{\beta}} \,dx\,d\tau+\\+\frac{q}{2(p-1)}\int\limits^t_0\fint\limits_{B_r(y)}
\frac{\tau^{\frac{1}{2(p-1)}-1}}{t^{\frac{1}{2(p-1)}}}
\int\limits_0^{\pm(\mu^{\pm}-u)}\frac{|z\mp \mu^{\pm}|^{q-1}}{\big[z+\epsilon\big]^{\beta-1}}\,dz\,\zeta^{l+p}(x) dx\,d\tau
\leqslant\\\leqslant \gamma(l) \fint\limits_{B_{r}(y)\times\{t\}}
\int\limits_0^{\pm(\mu^{\pm}-u)}\frac{|z\mp \mu^{\pm}|^{q-1}}{\big[z+\epsilon\big]^{\beta-1}}\,dz\,\zeta^{l+p}(x) dx+\\+
\frac{\gamma(l)}{(\sigma r)^p}\int\limits^t_0\fint\limits_{B_{r}(y)}\Big(\frac{\tau}{t}\Big)^{\frac{1}{2(p-1)}}\big[\pm(\mu^{\pm}-u)+\epsilon\big]^{p-\beta}\zeta^{l}(x)
\,dx\,d\tau
\leqslant\\\leqslant \frac{\gamma(l)}{\epsilon^{\beta-1}}\fint\limits_{B_{r}(y)\times\{t\}}
\int\limits_0^{\pm(\mu^{\pm}-u)}|z\mp \mu^{\pm}|^{q-1}\,dz\zeta^l(x)\, dx+\\+\frac{\gamma(l)\,t}{(\sigma r)^p}\sup\limits_{0\leqslant \tau\leqslant t}\fint\limits_{B_{r}(y)\times\{\tau\}}\big[\pm(\mu^{\pm}-u)+\epsilon\big]^{p-\beta}\zeta^l(x)\,dx
\leqslant \\\leqslant  \frac{\gamma(l)}{\epsilon^{\beta-1}}\sup\limits_{0\leqslant \tau\leqslant t}\fint\limits_{B_{r}(y)\times\{\tau\}}
\big[\pm(|\mu^{\pm}|^{q-1}\mu^{\pm}-|u|^{q-1}u)\big]\zeta^l(x)\,dx+\\+\frac{\gamma(l)\,t}{(\sigma r)^p}\mathcal{M}^{(1-q)(p-\beta)}\sup\limits_{0\leqslant \tau\leqslant t}\fint\limits_{B_{r}(y)\times\{\tau\}}
\big[\pm(|\mu^{\pm}|^{q-1}\mu^{\pm}-|u|^{q-1}u)+\epsilon \mathcal{M}^{q-1}\big]^{p-\beta}\zeta^l(x)\,dx\leqslant 
\\\leqslant \frac{\gamma(l)}{\epsilon^{\beta-1}}\sup\limits_{0\leqslant \tau\leqslant t}\fint\limits_{B_{r}(y)\times\{\tau\}}
\big[\pm(|\mu^{\pm}|^{q-1}\mu^{\pm}-|u|^{q-1}u)\big]\zeta^l(x)\,dx+\\+\frac{\gamma(l)\,t}{(\sigma r)^p}\frac{\mathcal{M}^{1-q}}{\epsilon^{\beta-p+1}}\sup\limits_{0\leqslant \tau\leqslant t}\fint\limits_{B_{r}(y)\times\{\tau\}}
\big[\pm(|\mu^{\pm}|^{q-1}\mu^{\pm}-|u|^{q-1}u)+\epsilon \mathcal{M}^{q-1}\big]\zeta^l(x)\,dx\leqslant\\\leqslant
\frac{\gamma(l)}{\epsilon^{\beta-1}\sigma^p}\sup\limits_{0\leqslant \tau\leqslant t}\fint\limits_{B_{r}(y)\times\{\tau\}}
\big[\pm(|\mu^{\pm}|^{q-1}\mu^{\pm}-|u|^{q-1}u)+\epsilon \mathcal{M}^{q-1}\big]\zeta^l(x)\,dx\leqslant\\\leqslant\frac{\gamma(l)}{ \epsilon^{\beta-1}\sigma^p}\Big(\sup\limits_{0\leqslant \tau\leqslant t}\mathcal{I}(\tau)+\epsilon \mathcal{M}^{q-1}\Big).
\end{multline}
Combining \eqref{eq6.5}, \eqref{eq6.6}, from \eqref{eq6.3} we obtain
\begin{multline*}
\frac{\gamma l}{\sigma r}\int\limits^t_0\fint\limits_{B_{r}(y)}|D u|^{p-1} \zeta^{l-1}(x)\,dx\,d\tau\leqslant\\\leqslant
\frac{\gamma(l)}{\sigma^p \epsilon^{\frac{(\beta-1)(p-1)}{p}}}\Big(\frac{t}{r^p}\Big)^{\frac{1}{p}}\mathcal{M}^{\frac{(1-q)\beta(p-1)}{p}}
\Big(\sup\limits_{0\leqslant \tau\leqslant t}\mathcal{I}(\tau)+\epsilon \mathcal{M}^{q-1}\Big)^{\frac{(1+\beta)(p-1)}{p}}=\\=
\frac{\gamma(l)}{\sigma^p}\big(\epsilon \mathcal{M}^{q-1}\big)^{\frac{1-\beta(p-1)}{p}}\Big(\sup\limits_{0\leqslant \tau\leqslant t}\mathcal{I}(\tau)+\epsilon \mathcal{M}^{q-1}\Big)^{\frac{(1+\beta)(p-1)}{p}},
\end{multline*}
using the Young inequality we get
\begin{equation*}
\frac{\gamma l}{\sigma r}\int\limits^t_0\fint\limits_{B_{ r}(y)}|D u|^{p-1} \zeta^{l-1}(x)\,dx\,d\tau\leqslant
\delta\,\sup\limits_{0\leqslant \tau\leqslant t}\mathcal{I}(\tau)+\frac{\gamma(l)}{\delta^\gamma \sigma^\gamma}\,\epsilon\,\mathcal{M}^{q-1},
\end{equation*}
from which we arrive at the required \eqref{eq6.2}, this completes the proof of the lemma.
\end{proof}

\subsubsection{Proof of Theorem \ref{th1.2} Concluded}
Choosing $t_1$, $t_2$ such that
$$\fint\limits_{B_{ r}(y)\times\{t_1\}}\pm\big(|\mu^{\pm}|^{q-1}\mu^{\pm}-|u|^{q-1}u\big)\zeta^l(x)\,dx=\sup\limits_{0\leqslant \tau\leqslant t}\mathcal{I}(\tau),$$
and
$$\fint\limits_{B_{r}(y)\times\{t_2\}}\pm\big(|\mu^{\pm}|^{q-1}\mu^{\pm}-|u|^{q-1}u\big)\zeta^l(x)\,dx=
\inf\limits_{0\leqslant \tau\leqslant t}\fint\limits_{B_{r}(y)\times\{\tau\}}\pm\big(|\mu^{\pm}|^{q-1}\mu^{\pm}-|u|^{q-1}u\big)\zeta^l(x)\,dx,$$
from \eqref{eq6.1} and Lemma \ref{lem6.1} we obtain with any $\delta \in (0, 1)$
\begin{multline*}
(1-\delta)\,\sup\limits_{0\leqslant \tau\leqslant t}\mathcal{I}(\tau)\leqslant \inf\limits_{0\leqslant \tau\leqslant t}\fint\limits_{B_{r}(y)\times\{\tau\}}\pm\big(|\mu^{\pm}|^{q-1}\mu^{\pm}-|u|^{q-1}u\big)\zeta^l(x)\,dx+\\+
\frac{\gamma(l)}{\delta^\gamma \sigma^\gamma}
\big[\max(\mu^+, -\mu^-)\big]^{\frac{(1-q)(p-1)}{2-p}}
\Big(\frac{t}{r^p}\Big)^{\frac{1}{2-p}},
\end{multline*}
from which  the required \eqref{eq1.8} follows, this completes the proof of Theorem \ref{th1.2}.

\subsection{Proof of Theorem \ref{th1.3}, Case $q>p-1$ and $q>1$}

As in the previous Section we will estimate the second integral on the right-hand side of \eqref{eq6.1}.
\begin{lemma}\label{lem6.2}
For any $\delta \in (0, 1)$ there holds
\begin{equation}\label{eq6.7}
\frac{\gamma l}{\sigma r}\int\limits^t_0\fint\limits_{B_{r}(y)}|D u|^{p-1} \zeta^{l-1}(x)\,dx\,d\tau\leqslant \delta\,\sup\limits_{0\leqslant \tau\leqslant t}\mathcal{I}(\tau)+\frac{\gamma}{\delta^\gamma \sigma^\gamma}
\Big(\frac{t}{r^p}\Big)^{\frac{q}{q-p+1}},
\end{equation}
where
$$\mathcal{I}(\tau):=\fint\limits_{B_{r}(y)\times\{\tau\}}\pm(|\mu^{\pm}|^{q-1}\mu^{\pm}-|u|^{q-1}u)\zeta^l(x)\,dx.$$
\end{lemma}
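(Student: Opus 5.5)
I would mimic the proof of Lemma \ref{lem6.1}, but with the roles of $q$ and $p$ in the weights adjusted to $q>1$, $q>p-1$. Set $w:=\pm(\mu^{\pm}-u)\geqslant 0$ and choose $\epsilon:=\big(\frac{t}{r^p}\big)^{\frac{1}{q-p+1}}$, which is the natural scale: it makes $\epsilon^{q-p+1}r^p=t$, so the target term $\big(\frac{t}{r^p}\big)^{\frac{q}{q-p+1}}$ equals $\epsilon^q$. Since $q>1$, Lemma \ref{lem2.1} gives the two-sided comparison $w^q\leqslant\gamma\,\pm(|\mu^{\pm}|^{q-1}\mu^{\pm}-|u|^{q-1}u)$. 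This comparison replaces \eqref{eq6.4}, and it is what removes any dependence on $\mathcal{M}$ in the final bound.

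First, by H\"older as in \eqref{eq6.3}, I split
\[
\int_0^t\fint|Du|^{p-1}\zeta^{l-1}\leqslant\Big(\int_0^t\fint\big(\tfrac{\tau}{t}\big)^{\kappa}\frac{|Du|^p\zeta^{l+p}}{(w+\epsilon)^{\beta}}\Big)^{\frac{p-1}{p}}\Big(\int_0^t\fint\big(\tfrac{t}{\tau}\big)^{\kappa(p-1)}(w+\epsilon)^{\beta(p-1)}\zeta^{l-p^2}\Big)^{\frac1p}.
\]
Here $\kappa>0$ is small, so that $\kappa(p-1)<1$, and $\beta>1$ is chosen with $\beta(p-1)<q$. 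This is possible since $q>p-1$; take for example $\beta=\frac12\big(1+\frac{q}{p-1}\big)$, which is bigger than 1. For the second factor I use Jensen/H\"older with exponent $\beta(p-1)/q<1$ and the comparison above. This bounds it by $\gamma t\big(\sup_\tau\mathcal{I}+\epsilon^q\big)^{\beta(p-1)/q}$, provided $l$ is large enough to absorb $\zeta^{l-p^2}$ into $\zeta^l$. This requirement is where the condition $l\geqslant\frac{2qp^2}{q-p+1}$ should enter.

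For the weighted gradient integral, I test \eqref{eq2.1} with $(\tau/t)^{\kappa}(w+\epsilon)^{1-\beta}\zeta^{l+p}$, justified by the mollification $\llbracket u\rrbracket_h$. The diffusion part gives the good term $(\beta-1)\int|Du|^p(w+\epsilon)^{-\beta}$ minus a cutoff term. After Young, the cutoff term is bounded by $\frac{\gamma t}{(\sigma r)^p}\sup_\tau\fint(w+\epsilon)^{p-\beta}\zeta^l$. The time part produces $\fint\int_0^{w}|z\mp\mu^{\pm}|^{q-1}(z+\epsilon)^{1-\beta}dz\,\zeta^{l+p}$ at time $t$, with a favourable-sign term coming from the derivative of $(\tau/t)^\kappa$. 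I bound this by $\epsilon^{1-\beta}\sup\mathcal{I}$. If $p-\beta\geqslant0$, I use $(w+\epsilon)^{p-\beta}\leqslant\epsilon^{p-\beta-q}(w+\epsilon)^q$ when $p-\beta<q$ (true since $p<q+1<q+\beta$), and then the comparison. Using $t/r^p=\epsilon^{q-p+1}$, the second bound becomes $\gamma\sigma^{-p}\epsilon^{1-\beta}(\sup\mathcal{I}+\epsilon^q)$, the same form as the first. If $p-\beta<0$, I simply use $(w+\epsilon)^{p-\beta}\leqslant\epsilon^{p-\beta}$ and reach the same form.

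Combining the two factors gives
\[
\frac{\gamma l}{\sigma r}\int_0^t\fint|Du|^{p-1}\zeta^{l-1}\leqslant\frac{\gamma}{\sigma^{\gamma}}\,\frac{t^{1/p}}{r}\,\epsilon^{\frac{(1-\beta)(p-1)}{p}}\big(\sup\mathcal{I}+\epsilon^q\big)^{\frac{p-1}{p}+\frac{\beta(p-1)}{pq}}.
\]
Now I substitute $t^{1/p}/r=\epsilon^{(q-p+1)/p}$. The total exponent on $\epsilon$ is then $\frac{q-p+1+(1-\beta)(p-1)}{p}$, and the exponent on $(\sup\mathcal{I}+\epsilon^q)$ is $\theta=\frac{(p-1)(q+\beta)}{pq}$. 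One checks that $q\theta$ plus the $\epsilon$-exponent equals $q$, so the expression is $\big(\epsilon^{q}\big)^{1-\theta}\big(\sup\mathcal{I}+\epsilon^q\big)^{\theta}$. We have $\theta<1$ exactly when $\beta(p-1)<q$, which holds by the choice of $\beta$. Young's inequality then yields $\delta\sup\mathcal{I}+\gamma\delta^{-\gamma}\sigma^{-\gamma}\epsilon^q$, which is \eqref{eq6.7}.

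The main obstacle is the energy test in the middle step: controlling the time term $\int_0^{w}|z\mp\mu^\pm|^{q-1}(z+\epsilon)^{1-\beta}dz$ uniformly in the sign of $u$. The plan is to bound it crudely by $\epsilon^{1-\beta}$ times the primitive. The remaining work is the exponent bookkeeping that fixes $\beta$ and $l$.
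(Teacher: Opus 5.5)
Your proposal is correct and follows essentially the same route as the paper. Both use the same $\epsilon=(t/r^p)^{1/(q-p+1)}$ and the comparison $w^q\leqslant\gamma\,\pm(|\mu^{\pm}|^{q-1}\mu^{\pm}-|u|^{q-1}u)$ from Lemma \ref{lem2.1}. Both then use the time-weighted H\"older split, the test function $(\tau/t)^{\kappa}(w+\epsilon)^{1-\beta}\zeta^{l+p}$ (the paper takes $\kappa=\frac{1}{2(p-1)}$), the Jensen step giving the condition $l\geqslant\frac{2qp^2}{q-p+1}$, and a final Young inequality. The only difference is that the paper caps $\beta<\min(p,\frac{q}{p-1})$, whereas you allow $\beta\geqslant p$ and add a case split. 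That split is harmless but unnecessary, since $(w+\epsilon)^{p-\beta}\leqslant\epsilon^{p-\beta-q}(w+\epsilon)^q$ holds for either sign of $p-\beta$.
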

\begin{proof}
Since $q> 1$, by the algebraic lemma, Lemma \ref{lem2.1} there holds
\begin{equation}\label{eq6.8}
[\pm(\mu^{\pm}-u)]^q\leqslant \pm(\mu^{\pm}-u) (|\mu^{\pm}|+|u|)^{q-1}\leqslant \gamma [|\mu^{\pm}|^{q-1}\mu^{\pm}-|u|^{q-1}u].
\end{equation}
Choose  $\epsilon=\big(\frac{t}{r^p}\big)^{\frac{1}{q-p+1}}$ and fix $\beta$ such that
$$1<\beta=\frac{1}{2}\big(1+\min(p, \frac{q}{p-1})\big)<\min\big(p, \frac{q}{p-1}\big).$$
By the H\"{o}lder inequality we obtain
\begin{multline}\label{eq6.9}
\int\limits^t_0\fint\limits_{B_{r}(y)}|D u|^{p-1}\,\zeta^{l-1}(x)\,dx d\tau\leqslant\\\leqslant\Big(\int\limits^t_0\fint\limits_{B_{ r}(y)}\Big(\frac{\tau}{t}\Big)^{\frac{1}{2(p-1)}}\,\big[\pm(\mu^{\pm}-u)+\epsilon]^{-\beta}|D u|^{p}\,\zeta^{l+p}(x)\,dx d\tau\Big)^{\frac{p-1}{p}}\times\\\times\Big(
\int\limits^t_0\fint\limits_{B_{r}(y)}\Big(\frac{t}{\tau}\Big)^{\frac{1}{2}}
\big[\pm(\mu^{\pm}-u)+\epsilon\big]^{\beta(p-1)}\zeta^{l-p^2}(x)\,dx\,d\tau\Big)^{\frac{1}{p}}.
\end{multline}
By the H\"{o}lder inequality, \eqref{eq6.8}, using the fact that $\frac{\beta (p-1)}{q} <1$ and choosing $l$ such that
$$\frac{q(l-p^2)}{\beta (p-1)}\geqslant l,\quad \text{i. e.}\quad l\geqslant \frac{2 q p^2}{q-p+1}>\frac{q p^2}{q-\beta(p-1)},$$
we obtain
\begin{multline}\label{eq6.10}
\int\limits^t_0\fint\limits_{B_{r}(y)}\Big(\frac{t}{\tau}\Big)^{\frac{1}{2}}
\big[\pm(\mu^{\pm}-u)+\epsilon\big]^{\beta(p-1)}\zeta^{l-p^2}(x)\,dx\,d\tau\leqslant\\\leqslant
\gamma \int\limits^t_0\fint\limits_{B_{r}(y)}\Big(\frac{t}{\tau}\Big)^{\frac{1}{2}}
\Big[[\pm(\mu^{\pm}-u)]^q+\epsilon^q\Big]^{\frac{\beta (p-1)}{q}}\zeta^{l-p^2}(x)\,dx\,d\tau\leqslant\\\leqslant
\gamma t\,\Big(\sup\limits_{0\leqslant \tau\leqslant t}\fint\limits_{B_{ r}(y)\times\{\tau\}}\big[\pm(|\mu^{\pm}|^{q-1}\mu^{\pm}-|u|^{q-1}u)+\epsilon^q\big]\zeta^{\frac{q (l-p^2)}{\beta (p-1)}}(x)\,dx\Big)
^{\frac{\beta (p-1)}{q}}\leqslant\\\leqslant
\gamma t\,\Big(\sup\limits_{0\leqslant \tau\leqslant t}\fint\limits_{B_{ r}(y)\times\{\tau\}}\big[\pm(|\mu^{\pm}|^{q-1}\mu^{\pm}-|u|^{q-1}u)+\epsilon^q\big]\zeta^l(x)\,dx\Big)
^{\frac{\beta (p-1)}{q}}\leqslant\\\leqslant
\gamma t\Big(\sup\limits_{0\leqslant \tau\leqslant t}\mathcal{I}(\tau)+\epsilon^q\Big)^{\frac{\beta (p-1)}{q}}.
\end{multline}
To estimate the first term on the right-hand side of \eqref{eq6.9} we test identity \eqref{eq2.1} by $$\Big(\frac{\tau}{t}\Big)^{\frac{1}{2(p-1)}}[\pm(\mu^{\pm}-u)+\epsilon]^{1-\beta}\zeta^{l+p}(x),$$ 
the use of such a test function is justified,  by making use of the
exponential time mollification  $\llbracket u \rrbracket_h$.
By the Young inequality, \eqref{eq1.2}, \eqref{eq6.8} and using the fact that $p-q< 1< \beta <p$ we obtain
\begin{multline}\label{eq6.11}
\gamma\int\limits^t_0\fint\limits_{B_{r}(y)}\Big(\frac{\tau}{t}\Big)^{\frac{1}{2(p-1)}}[\pm(\mu^{\pm}-u)+\epsilon]^{-\beta}|D u|^{p}\,\zeta^{l+p}(x)\,dx d\tau+\\+\frac{q}{2(p-1)}\int\limits^t_0\fint\limits_{B_r(y)}
\frac{\tau^{\frac{1}{2(p-1)}-1}}{t^{\frac{1}{2(p-1)}}}
\int\limits_0^{\pm(\mu^{\pm}-u)}\frac{|z\mp \mu^{\pm}|^{q-1}}{\big[z+\epsilon\big]^{\beta-1}}\,dz\,\zeta^{l+p}(x) dx\,d\tau
\leqslant \\\leqslant \gamma \fint\limits_{B_{ r}(y)\times\{t\}}\int\limits_{0}^{\pm(\mu^{\pm}-u)}\frac{|z\mp\mu^{\pm}|^{q-1}}{\big[z+\epsilon\big]^{\beta-1}}\,dz\zeta^{l+p}(x)\,dx+\\+
\frac{\gamma(l)}{(\sigma r)^p} \int\limits^t_0\fint\limits_{B_{r}(y)}\big[\pm(\mu^{\pm}-u)+\epsilon]^{p-\beta}\zeta^l(x)\,dx d\tau\leqslant \\\leqslant
\frac{\gamma(l)}{\epsilon^{\beta-1}}\,\sup\limits_{0\leqslant \tau\leqslant t}
\fint\limits_{B_{r}(y)\times\{\tau\}}\pm(|\mu^{\pm}|^{q-1}\mu^{\pm}-|u|^{q-1}u)\zeta^l(x)\,dx 
+\\+\frac{\gamma(l)\,t}{(\sigma r)^p}\sup\limits_{0\leqslant \tau\leqslant t}\,\fint\limits_{B_{ r}(y)\times\{\tau\}}\big[\pm(|\mu^{\pm}|^{q-1}\mu^{\pm}-|u|^{q-1}u)+\epsilon^q\big]^{\frac{p-\beta}{q}}\zeta^l(x)
\,dx
\leqslant\\\leqslant
\frac{\gamma(l)}{\epsilon^{\beta-1}}\,\sup\limits_{0\leqslant \tau\leqslant t}
\fint\limits_{B_{r}(y)\times\{\tau\}}\pm(|\mu^{\pm}|^{q-1}\mu^{\pm}-|u|^{q-1}u)\zeta^l(x)\,dx 
+\\+\frac{\gamma(l)\,t}{(\sigma r)^p\,\epsilon^{\beta+q-p}}\sup\limits_{0\leqslant \tau\leqslant t}\,\fint\limits_{B_{ r}(y)\times\{\tau\}}\big[\pm(|\mu^{\pm}|^{q-1}\mu^{\pm}-|u|^{q-1}u)+\epsilon^q\big]\zeta^l(x)
\,dx\leqslant\\\leqslant
\frac{\gamma(l)}{\sigma^p \epsilon^{\beta-1}}\sup\limits_{0\leqslant \tau\leqslant t}
\fint\limits_{B_{r}(y)\times\{\tau\}}\big[\pm(|\mu^{\pm}|^{q-1}\mu^{\pm}-|u|^{q-1}u)+\epsilon^q\big]\zeta^l(x)\,dx\leqslant\\\leqslant \frac{\gamma(l)}{\sigma^p \epsilon^{\beta-1}}\big(\sup\limits_{0\leqslant \tau\leqslant t}\mathcal{I}(\tau) +\epsilon^q\big),
\end{multline}
Combining \eqref{eq6.9}-\eqref{eq6.11} and using the Young inequality we arrive at
\begin{multline*}
\frac{\gamma\,l}{\sigma r}\int\limits^t_0\fint\limits_{B_{ r}(y)}|D u|^{p-1}\,\zeta^{l-1}(x)\,dx d\tau\leqslant
\frac{\gamma(l)}{\sigma^p \epsilon^{\frac{(\beta-1)(p-1)}{p}}}\Big(\frac{t}{r^p}\Big)^{\frac{1}{p}}\Big(
\sup\limits_{0\leqslant \tau\leqslant t}\mathcal{I}(\tau)+
\epsilon^q\Big)^{\frac{p-1}{p}(1+\frac{\beta}{q})}=\\=
\frac{\gamma(l)}{\sigma^p}\,\epsilon^{\frac{q-\beta(p-1)}{p}}\Big(
\sup\limits_{0\leqslant \tau\leqslant t}\mathcal{I}(\tau)+
\epsilon^q\Big)^{\frac{p-1}{p}(1+\frac{\beta}{q})}\leqslant \delta \sup\limits_{0\leqslant \tau\leqslant t}\mathcal{I}(\tau)+
\frac{\gamma(l)}{\delta^\gamma \sigma^\gamma}\,\epsilon^q,
\end{multline*}
which completes the proof of the lemma.
\end{proof}

\subsubsection{Proof of Theorem \ref{th1.3} Concluded}
Choosing $t_1$, $t_2$ as in the previous section, inequality \eqref{eq6.1} and Lemma \ref{lem6.2} imply
\begin{equation*}
(1-\delta)\sup\limits_{0\leqslant \tau\leqslant t}\mathcal{I}(\tau)\leqslant 
\inf\limits_{0\leqslant \tau\leqslant t}\fint\limits_{B_{r}(y)}\pm (|\mu^{\pm}|^{q-1}\mu^{\pm}-|u|^{q-1}u)\zeta^l(x)\,dx+
\frac{\gamma(l)}{\sigma^\gamma \delta^\gamma}\Big(\frac{t}{r^p}\Big)^{\frac{q}{q-p+1}},
\end{equation*}
which yields the required \eqref{eq1.9}, this completes the proof of Theorem \ref{th1.3} in the case $q>p-1$ and $q>1$.

\section{Appendix B. Proof of Theorem \ref{th1.4}}
 Construct the cylinder
$$Q_{r, \theta}:=B_r(x_0)\times (t_0-\theta, t_0)\subset \Omega_T, \quad \theta:=b\,\omega^{q-p+1}\,r^p,$$ such that
$$\mu^+\geqslant \sup\limits_{Q_{r, \theta}}u,\quad \mu^-\leqslant \inf\limits_{Q_{r, \theta}} u,
\quad \omega\geqslant \mu^+-\mu^-.$$
The following is the key lemma in the proof of Theorem \ref{th1.4}. 
\begin{propo}\label{pr7.1}
 Let $u$ be a locally bounded, local weak sub(super)-solution to \eqref{eq1.1}, \eqref{eq1.2}
 in $\Omega_T$ and let $q>p-1>0$. Assume also that
\begin{equation}\label{eq7.1}
|B_r(x_0)\cap\big\{\pm (\mu^{\pm}-u(\cdot, t))\geqslant \xi\,\omega\big\}|\geqslant \alpha|B_r(x_0)|,
\end{equation}
 for all $t\in (t_0-\theta, t_0)$  and with some $b$, $\xi$, $\alpha \in (0, 1)$. Then for any $\nu\in (0, 1)$ 
 there exist numbers   $\epsilon_*$, $\xi_*\in (0, 1)$ depending only on the data, $\nu$, $\xi$, $\alpha$ and $b$ such that
 for all  $t\in (t_0-\frac{1}{2}\theta, t_0)$
\begin{equation}\label{eq7.2}
\big|B_{\frac{3}{4}r}(x_0)\cap\big\{\mp u(x, t)\leqslant \xi_*\,\omega\big\}\big|\leqslant \nu\,|B_{\frac{3}{4}r}(x_0)|,
\end{equation}
provided that 
\begin{equation}\label{eq7.3}
\pm \mu^{\pm}\leqslant \epsilon_*\,\omega.
\end{equation}
\end{propo}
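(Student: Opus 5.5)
We treat the upper sign; the lower one is symmetric. Write $w:=-u$. The hypothesis \eqref{eq7.3} gives $u\leqslant\mu^+\leqslant\epsilon_*\omega$. By \eqref{eq7.1}, for every $t\in(t_0-\theta,t_0)$ we have $u(\cdot,t)\leqslant\mu^+-\xi\omega\leqslant-\frac12\xi\omega$ on a set of measure at least $\alpha|B_r(x_0)|$, provided $\epsilon_*\leqslant\frac12\xi$. The aim is to show that the sets $\{u(\cdot,t)>-k\}$ are small at a tiny level $k=\xi_*\omega$. The plan is a De Giorgi shrinking argument at the levels $k_j:=2^{-j}\xi\omega$, $j=1,\dots,j_*$, followed by an $L^\infty_t$ step. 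Both steps use the energy estimate for $(u+k)_+$ from the proof of Lemma~\ref{lem2.5}, with the function $\tilde g_+(u,k)$.

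\textbf{Step 1 (intrinsic subcylinders).} Since $q>p-1$, the time scale shrinks at small levels. So I do not work on the whole $Q_{r,\theta}$. Instead I fix a level $k$ and cover $(t_0-\frac12\theta,t_0)$ by intervals of length $\theta_k:=\delta\,k^{q-p+1}r^p$, with $\delta$ small to be chosen. Here $\theta_k\leqslant\theta$ because $k\leqslant\omega$ and $\delta\leqslant b$. Each such interval has a predecessor of the same length inside $(t_0-\theta,t_0)$. On the doubled cylinder $B_r\times(\bar s-2\theta_k,\bar s)$ I apply the energy estimate with $\zeta=\zeta_1(x)\zeta_2(t)$. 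I require $\epsilon_*\omega\leqslant k$, so that $0\leqslant(u+k)_+\leqslant 2k$. The computations of Lemma~\ref{lem2.5} then give $c\,k^{q-1}(u+k)_+^2\leqslant\tilde g_+(u,k)\leqslant\gamma k^{q+1}$ on $\{u\geqslant -k\}$. The right-hand side of the energy estimate is therefore bounded by $\gamma\big(k^p r^{-p}+k^{q+1}\theta_k^{-1}\big)|Q|$, and with the choice of $\theta_k$ both terms are of order $\frac{k^p}{\delta r^p}|Q|$.

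\textbf{Step 2 (shrinking in space-time).} At every time the set where $u+k_j\leqslant0$ occupies at least the fraction $\alpha$ of $B_r$. The De Giorgi isoperimetric inequality at each time slice, integrated in $t$, together with Hölder's inequality and the gradient bound from Step 1, gives
\[
k_{j+1}|A_{j+1}|\leqslant\frac{\gamma r}{\alpha}\Big(\frac{k_j^p}{\delta r^p}|Q|\Big)^{1/p}|A_j\setminus A_{j+1}|^{\frac{p-1}{p}},\qquad A_j:=Q\cap\{u>-k_j\}.
\]
Since $k_j\leqslant\xi\omega$ and $q>p-1$, the time length $\delta k_{j_*}^{q-p+1}r^p$ is at most the one at level $k_j$. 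So the shrinking can be carried out on a single cylinder, the one intrinsic to the smallest level $k_{j_*}$. Summing over $j\leqslant j_*$ yields $|A_{j_*}|\leqslant\gamma(\alpha\delta^{1/p})^{-1}j_*^{-\frac{p-1}{p}}|Q|$. This can be made smaller than any $\nu_1$ by taking $j_*$ large.

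\textbf{Step 3 (from space-time to every time, the main obstacle).} From the smallness of $|A_{j_*}|$ I pick a slice $s$ in the earlier half of the doubled cylinder with $|B_r\cap\{u(\cdot,s)>-k_{j_*}\}|\leqslant2\nu_1|B_r|$. Then I apply the energy estimate with a purely spatial cutoff, starting at $s$ and at level $k=k_{j_*}$. This gives, for all later $t$ in the subinterval,
\[
\frac{c\,k^{q+1}}{4}\,\Big|B_{\frac34r}\cap\big\{u(\cdot,t)>-\tfrac k2\big\}\Big|\leqslant\gamma k^{q+1}\nu_1|B_r|+\gamma\frac{k^p}{r^p}\theta_k|B_r|=\gamma k^{q+1}(\nu_1+\delta)|B_r|.
\]
The delicate point is exactly this balance. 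On the full length $\theta$ the spatial term would be of size $b(\omega/k)^{q-p+1}k^{q+1}$, which is far too large. Working on intrinsic subintervals of length $\delta k^{q-p+1}r^p$ is what makes that term small. I choose $\delta$ and then $\nu_1$ small relative to $\nu$; this fixes $j_*$. Then I set $\xi_*:=\frac12 k_{j_*}/\omega=2^{-j_*-1}\xi$ and $\epsilon_*\leqslant\xi_*$. Since every $t\in(t_0-\frac12\theta,t_0)$ lies in some subinterval, \eqref{eq7.2} follows. All constants depend only on the data, $\nu$, $\xi$, $\alpha$ and $b$.
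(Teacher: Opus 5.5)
There is a genuine gap where Steps~2 and~3 meet.

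In Step~2 you run the shrinking on the cylinder intrinsic to the smallest level. Your reason is that this cylinder is shorter than the intrinsic cylinder of every $k_j$. For $q>p-1$ the monotonicity goes the other way. On a cylinder of length $\theta_{k_{j_*}}=\delta k_{j_*}^{q-p+1}r^p$, the time-cutoff term of the energy estimate at level $k_j$ is
\[
\gamma\,\frac{k_j^{q+1}}{\theta_{k_{j_*}}}\,|Q|=\gamma\,\frac{k_j^{p}}{\delta r^{p}}\Big(\frac{k_j}{k_{j_*}}\Big)^{q-p+1}|Q|=\gamma\,\frac{k_j^{p}}{\delta r^{p}}\,2^{(j_*-j)(q-p+1)}\,|Q|.
\]
So your displayed shrinking inequality is false for $j<j_*$. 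After summation, these exponentially large factors leave no smallness for $|A_{j_*}|/|Q|$. The energy estimate at level $k$ is balanced only on cylinders of length at least $\delta k^{q-p+1}r^p$. Hence a single cylinder serving all levels must be at least as long as the one intrinsic to the largest level $k_1=\frac12\xi\omega$.

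On such a long cylinder Step~2 does work, but then Step~3 fails. The bound $|A_{j_*}|\leqslant\gamma j_*^{-\frac{p-1}{p}}|Q|$ only says that bad time slices fill a fraction of order $j_*^{-\frac{p-1}{p}}$ of an interval of length $\delta k_1^{q-p+1}r^p$. The window in which you need a good slice before each $t$ is only a fraction $2^{-(j_*-1)(q-p+1)}$ of that interval. Polynomial decay cannot beat exponential decay, so nothing prevents an entire window from consisting of bad slices. Finishing with Lemma~\ref{lem2.5} on the long cylinder fails for the same reason: by \eqref{eq2.12}, the critical mass at level $k_{j_*}$ is of order $2^{-j_*(q-p+1)}$, not $j_*^{-\frac{p-1}{p}}$.

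The long scale needed for the De Giorgi shrinking and the short scale needed for propagation from a slice are incompatible, so \eqref{eq7.2} does not follow from your plan. This is why the paper never integrates in time:
\begin{itemize}
\item Lemma~\ref{lem7.1} tests with $\zeta^p/[k_j-(u\pm k_j)_\pm+\epsilon k_j]^{p-1}$, where $k_j=\epsilon^j\xi_0\omega$.
\item This produces a logarithmic gradient term, bounded below via Poincar\'e and \eqref{eq7.5}, with error terms $\gamma(b)r^{N-p}$ independent of $j$.
\item It then argues at a single time $t_\delta$. Either $I'(t_\delta)\geqslant0$, or one compares $I(t_\delta)\leqslant I(t_*)$ with the last earlier time $t_*$ at which $I'\geqslant0$.
\item This yields $y_{j+1}\leqslant\max\{\tilde\nu,(1-\frac12\epsilon^{q-p+1})y_j\}$ directly for $y_j=\sup_tY_j(t)$.
\end{itemize}
Only the monotonicity of $I$ on $(t_*,t_\delta)$ is used, so no intrinsic time scale ever enters.
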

We will  proceed  with  an  additional  regularity  assumption  that
\begin{equation}\label{eq7.4}
\frac{\partial}{\partial t}(|u|^{q-1} u) \in C(t_0-\theta, t_0; B_r(x_0)).
\end{equation}
The analysis of removing this assumption has been carried out in \cite[Section 6]{Bog3} for non-negative solutions. However, the
 same proof actually works for sign-changing solutions.

Observe that conditions \eqref{eq7.1}, \eqref{eq7.3} yield
\begin{equation}\label{eq7.5}
|B_r(x_0)\cap\big\{\mp u\geqslant \xi_0\,\omega\big\}|\geqslant \alpha|B_r(x_0)|,\quad \text{for all}\quad t\in(t_0-\theta, t_0),
\end{equation}
provided that $\epsilon_*\leqslant \xi_0:=\frac{1}{2}\,\xi.$

Define the piecewise smooth cutoff function $\zeta=\zeta_1(x) \zeta_2(t)$, where $\zeta_1(x)\in C^1_0(B_r(x_0))$, $\zeta_1(x)=1$ in $B_{\frac{3}{4} r}(x_0)$, $0\leqslant \zeta_1(x)\leqslant 1$, $|D \zeta_1(x)|\leqslant \frac{\gamma}{r}$ and $\zeta_2(t)\in C^1(\mathbb{R})$,
$\zeta_2(t)=1$ for $t\geqslant t_0-\frac{1}{2}\theta$, $\zeta_2(t)=0$ for $t\leqslant t_0-\frac{3}{4}\theta$, $0\leqslant \zeta_2(t)\leqslant 1$, $|\zeta'_t|\leqslant \frac{\gamma}{\theta}$.

The proof of Proposition \ref{pr7.1} consists of several lemmas. 

\begin{lemma}\label{lem7.1}
For   all $t\in (t_0-\theta, t_0)$, every $\epsilon \in (0, 1)$, every $j\geqslant 0$ there holds
\begin{multline}\label{eq7.6}
\frac{\partial}{\partial t}\int\limits_{B_r(x_0)}\int\limits^{(u\pm\epsilon^j\xi_0\omega)_{\pm}}_0\frac{|\epsilon^j\xi_0\omega-z|^{q-1} dz}{[\epsilon^j\xi_0\omega-z+\epsilon^{j+1}\xi_0\omega]^{p-1}}\,\zeta^p\,dx+\\+
\frac{1}{\gamma(\alpha) r^p}\,\int\limits_{B_r(x_0)}\Big[\log\frac{\epsilon^j(1+\epsilon)\xi_0\omega}{\epsilon^j\xi_0\omega-(u\pm\epsilon^j\xi_0\omega)_{\pm}+
\epsilon^{j+1}\xi_0\omega}\Big]^p\,
\zeta^p\,dx\leqslant\gamma(b)\,r^{N-p}.
\end{multline}
\end{lemma}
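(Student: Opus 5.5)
This is a logarithmic-type estimate, obtained by testing the equation with a singular power of the distance to the level. Set $k:=\epsilon^j\xi_0\omega$ and $\varepsilon_1:=\epsilon^{j+1}\xi_0\omega=\epsilon k$. Work with the truncation $w:=(u\pm k)_{\pm}\in[0,k]$; by Lemma \ref{lem2.8} it may be treated via a sub(super)-solution. Define
\[
\varphi(z):=\int_0^{z}\frac{ds}{[k-s+\varepsilon_1]^{p-1}}, \qquad 0\le z\le k,
\]
and test \eqref{eq2.1} with $\varphi(w)\,\zeta^p$. This is legitimate under \eqref{eq7.4}; in general one uses the mollification $\llbracket u\rrbracket_h$.

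\emph{Time term.} Write $\pm u = -k+w$ on the set where $w>0$, so that $|u|=|k-w|$ there. Then the time part becomes
\[
\frac{\partial}{\partial t}\int_{B_r}\int_0^{w}\frac{|k-z|^{q-1}\,dz}{[k-z+\varepsilon_1]^{p-1}}\,\zeta^p\,dx
\]
plus a term carrying $\partial_t\zeta_2$. I would handle that term either by keeping $\zeta$ time-independent in the computation (only $\zeta_1$ matters for a pointwise-in-$t$ inequality), or by bounding it with $\gamma r^N\theta^{-1}\int_0^k |k-z|^{q-1}(\varepsilon_1)^{1-p}\,dz\le \gamma k^{q}\varepsilon_1^{1-p}r^N/\theta$. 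In the latter case one would need $k^{q-p+1}\lesssim\omega^{q-p+1}$, which gives $\gamma(b)r^{N-p}$ only up to an $\epsilon$-dependence. For this reason I prefer treating the $\zeta_2$ factor as frozen.

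\emph{Elliptic term.} Since $\varphi'(w)=[k-w+\varepsilon_1]^{1-p}$, the structure conditions give the good term $K_1|Dw|^p\varphi'(w)\zeta^p$. Put $\psi:=\log\frac{k+\varepsilon_1}{k-w+\varepsilon_1}$, so that $|D\psi|^p=|Dw|^p[k-w+\varepsilon_1]^{-p}$. The good term is therefore $\ge K_1|D\psi|^p(k-w+\varepsilon_1)\zeta^p$, which is not quite of the form $|D\psi|^p$. The cross term is
\[
pK_2|Dw|^{p-1}\varphi(w)\,\zeta^{p-1}|D\zeta|.
\]
Here $\varphi(w)\le \frac{(k+\varepsilon_1)^{2-p}}{2-p}$ if $p<2$, and $\varphi(w)\le \frac{(k-w+\varepsilon_1)^{2-p}}{p-2}$ if $p>2$. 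Young's inequality absorbs the cross term into the good term and leaves $\gamma|D\zeta|^p(\ldots)$. For $p>2$ the remainder is $\gamma(k-w+\varepsilon_1)^{p-1}\cdot(k-w+\varepsilon_1)^{(2-p)p/(p-1)}\ldots$. I expect the bookkeeping to close to a term of size $\gamma r^{-p}$ times a power of $k$. Since $k\le\omega\le \gamma$ after normalization, the goal is a remainder $\le\gamma(b)r^{N-p}$ once the degeneracy weight is included.

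\emph{Log term and the main obstacle.} The remaining task is to produce $\frac{1}{\gamma(\alpha)r^p}\int\psi^p\zeta^p$ from the gradient term. This uses a weighted Poincaré inequality: by \eqref{eq7.5}, $\psi$ vanishes at each time on a set of measure $\ge\alpha|B_r|$. Indeed, where $\mp u\ge\xi_0\omega\ge k$ we have $w=0$. Hence
\[
\int_{B_r}\psi^p\zeta_1^p\,dx\le \gamma(\alpha)\,r^p\int_{B_r}|D\psi|^p\zeta_1^p\,dx,
\]
with $\zeta_1$ chosen radially decreasing, as in DiBenedetto's lemma. The main obstacle is converting the good term $|Dw|^p(k-w+\varepsilon_1)^{1-p}$ into $|D\psi|^p$, since the two differ by the factor $(k-w+\varepsilon_1)\le (1+\epsilon)k$. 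I would first check whether the test function should instead be $\varphi$ with exponent tuned so that $\varphi'\,$ matches $(k-w+\varepsilon_1)^{-p}$. If the exponent $p-1$ in the statement is what the computation produces after pulling out the normalization, I would rescale by $k$: the missing factor is $k$, and it is harmless if the time integrand is viewed with matching weight. Failing that, I would accept constants depending on $\epsilon$, $j$ and $\omega$ hidden in $\gamma(b)$, and adjust accordingly.
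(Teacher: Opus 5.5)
There is a genuine gap, and it is exactly your ``main obstacle'': you test with the wrong function. With $\varphi(w)\zeta^p$, where $\varphi$ is the primitive of $[k-s+\varepsilon_1]^{1-p}$, the elliptic part only gives $|Dw|^p(k-w+\varepsilon_1)^{1-p}$. The time part is also not the one you wrote: $q|k-w|^{q-1}\partial_t w\cdot\varphi(w)$ equals $\partial_t\int_0^w q|k-z|^{q-1}\varphi(z)\,dz$, whereas the integrand in \eqref{eq7.6} comes from multiplying by $\varphi'(w)$. The paper tests with $\varphi'(w)$ itself. By Lemma \ref{lem2.8}, $v:=k-(u\pm k)_{\pm}=\min(\mp u,k)$ is a super-solution (of the equation for $\mp u$), and one tests with $(v+\varepsilon_1)^{1-p}\zeta^p$. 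The time term is then exactly the one in \eqref{eq7.6}, up to the factor $q$. Since $D(v+\varepsilon_1)^{1-p}=-(p-1)(v+\varepsilon_1)^{-p}Dv$, the good term is $(p-1)K_1|D\psi|^p\zeta^p$, with the right power $p$. The cross term $pK_2|Dv|^{p-1}(v+\varepsilon_1)^{1-p}\zeta^{p-1}|D\zeta|=pK_2|D\psi|^{p-1}\zeta^{p-1}|D\zeta|$ is absorbed by Young, leaving $\gamma\int_{B_r}|D\zeta|^p\,dx\le\gamma r^{N-p}$. Your fallback of hiding $\epsilon$-, $j$- or $\omega$-dependence in $\gamma(b)$ does not prove the statement, and it would be fatal later. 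In \eqref{eq7.11}, $[\log\frac{1+\epsilon}{2\epsilon}]^p$ must beat $\gamma(\alpha,b)/\tilde\nu$, which is impossible if the constant grows like a negative power of $\epsilon$.

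Two further steps also fail. First, freezing $\zeta_2$ proves a different inequality. In \eqref{eq7.6} the derivative falls on $\zeta_1^p\zeta_2^p$, and the vanishing of $\zeta_2$ for $t\le t_0-\frac34\theta$ is what makes $t_*$ well defined in the next step. So the $\zeta_t$-term must be estimated, and your bound lost a factor $\epsilon^{1-p}$ by using $[k-z+\varepsilon_1]^{1-p}\le\varepsilon_1^{1-p}$. Instead, for $z<k$ use $[k-z+\varepsilon_1]^{1-p}\le(k-z)^{1-p}$. Then $\int_0^k(k-z)^{q-p}\,dz=\frac{k^{q-p+1}}{q-p+1}\le\gamma\omega^{q-p+1}$, which is where $q>p-1$ is used. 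The intrinsic scaling $\theta=b\,\omega^{q-p+1}r^p$ then turns $\frac{\gamma}{\theta}|B_r|\,\omega^{q-p+1}$ into $\frac{\gamma}{b}r^{N-p}$; no normalization of $\omega$ is available or needed.

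Second, $w\in[0,k]$ is false. $w$ reaches $k\pm\mu^{\pm}$, so it exceeds $k$ by up to $\epsilon_*\omega$ where $u$ has the wrong sign. You cannot truncate this away, since $\max(v,0)$ of a super-solution need not be a super-solution. This is why \eqref{eq7.3} with $\epsilon_*\le\frac12\epsilon^{j+1}\xi_0$ enters the proof. It keeps $k-w+\varepsilon_1\ge\frac12\varepsilon_1$, so the test function is bounded. It also bounds the extra piece of the integrand over $z\in(k,k+\epsilon_*\omega)$ by $\gamma\,\epsilon_*^q\omega^{q-p+1}(\epsilon^{j+1}\xi_0)^{1-p}\le\gamma\omega^{q-p+1}$. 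Your Poincar\'e step, using the zero set $\{\mp u\ge\xi_0\omega\}$ from \eqref{eq7.5}, agrees with the paper.
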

\begin{proof}
By Lemma \ref{lem2.7} the function $\epsilon^j\xi_0\omega-(u\pm \epsilon^j\xi_0\omega)_{\pm}$ is a local, weak super-solution to \eqref{eq1.1}, \eqref{eq1.2}, so, we test
\eqref{eq2.1} by
$$\frac{\zeta^p}{\big[\epsilon^j\xi_0\omega-(u\pm \epsilon^j\xi_0\omega)_{\pm}+\epsilon^{j+1}\xi_0\omega\big]^{p-1}}.$$
By standard calculations, using \eqref{eq1.2} and the Young inequality we obtain
\begin{multline}\label{eq7.7}
\frac{\partial}{\partial t}\int\limits_{B_r(x_0)}\int\limits^{(u\pm\epsilon^j\xi_0\omega)_{\pm}}_0\frac{|\epsilon^j\xi_0\omega-z|^{q-1} dz}{[\epsilon^j\xi_0\omega-z+\epsilon^{j+1}\xi_0\omega]^{p-1}}\,\zeta^p\,dx+\\+
\frac{1}{\gamma}\,\int\limits_{B_r(x_0)}\Big|D \log\frac{\epsilon^j(1+\epsilon)\xi_0\omega}{\epsilon^j\xi_0\omega-(u\pm\epsilon^j\xi_0\omega)_{\pm}+
\epsilon^{j+1}\xi_0\omega}\Big|^p\,
\zeta^p\,dx\leqslant\\\leqslant \frac{\gamma}{\theta}\int\limits_{B_r(x_0)}\int\limits^{(u\pm\epsilon^j\xi_0\omega)_{\pm}}_0\frac{|\epsilon^j\xi_0\omega-z|^{q-1} dz}{[\epsilon^j\xi_0\omega-z+\epsilon^{j+1}\xi_0\omega]^{p-1}}\,\zeta^{p-1}\,dx+\gamma\,r^{N-p}.
\end{multline}
Let us estimate the terms on the left and right-hand side of \eqref{eq7.7}. Choosing $\epsilon_*$ from the condition
$$\epsilon_* \leqslant \frac{1}{2}\, \epsilon^{j+1}\,\xi_0,$$
by \eqref{eq7.3} we obtain
\begin{multline*}
\int\limits^{(\epsilon^j\xi_0\omega\pm u)_{\pm}}_0\frac{|\epsilon^j\xi_0\omega-z|^{q-1} dz}{[\epsilon^j\xi_0\omega-z+\epsilon^{j+1}\xi_0\omega]^{p-1}} \leqslant
\int\limits^{(\epsilon^j\xi_0+\epsilon_*)\omega}_0\frac{|\epsilon^j\xi_0\omega-z|^{q-1} dz}{[\epsilon^j\xi_0\omega-z+\epsilon^{j+1}\xi_0\omega]^{p-1}}\leqslant\\\leqslant
\int\limits^{\epsilon^j\xi_0\omega}_0
(\epsilon^j\xi_0\omega-z)^{q-p} dz +\int\limits^{(\epsilon^j\xi_0+\epsilon_*)\omega}_{\epsilon^j\xi_0\omega}
\frac{(z-\epsilon^j\xi_0\omega)^{q-1} dz}{[\epsilon^j\xi_0\omega-z+\epsilon^{j+1}\xi_0\omega]^{p-1}}\leqslant\\\leqslant 
\gamma\,(\epsilon \xi_0 \omega)^{j(q-p+1)}+\int\limits_0^{\epsilon_* \omega}\frac{z^{q-1} dz}{[\epsilon^{j+1}\xi_0 \omega-z]^{p-1}}\leqslant
\gamma\,(\epsilon \xi_0 \omega)^{j(q-p+1)}+\gamma\,\frac{\epsilon^{q}_*\,\omega^{q-p+1}}{(\epsilon^{j+1}\xi_0-\epsilon_*)^{p-1}}
\leqslant\\\leqslant
\gamma\,(\epsilon \xi_0 \omega)^{j(q-p+1)}+\gamma\,\frac{\epsilon^{q}_*\,\omega^{q-p+1}}{(\epsilon^{j+1}\xi_0)^{p-1}}
\leqslant \gamma\,(\epsilon \xi_0 \omega)^{j(q-p+1)}\leqslant\gamma\,\omega^{q-p+1},
\end{multline*}
hence, the right-hand side of \eqref{eq7.7} is estimated by
$$\frac{\gamma}{\theta}\int\limits_{B_r(x_0)}\int\limits^{(u\pm\epsilon^j\xi_0\omega)_{\pm}}_0\frac{|\epsilon^j\xi_0\omega-z|^{q-1} dz}{[\epsilon^j\xi_0\omega-z+\epsilon^{j+1}\xi_0\omega]^{p-1}}\,\zeta^{p-1}\,dx+\gamma\,r^{N-p}\leqslant \frac{\gamma}{b}\,r^{N-p}.$$
By the Poincare type inequality, see e.g. \cite[Proposition 2.1]{DiB1}, using \eqref{eq7.1} we estimate the second term on the left-hand side 
of \eqref{eq7.7} as follows
\begin{multline*}
\int\limits_{B_r(x_0)}\Big|D \log\frac{\epsilon^j(1+\epsilon)\xi_0\omega}{\epsilon^j\xi_0\omega-(u\pm\epsilon^j\xi_0\omega)_{\pm}+
\epsilon^{j+1}\xi_0\omega}\Big|^p\,
\zeta^p\,dx\geqslant\\\geqslant \frac{1}{\gamma 
\,\alpha^p\,r^p}\int\limits_{B_r(x_0)}\Big[ \log\frac{\epsilon^j(1+\epsilon)\xi_0\omega}{\epsilon^j\xi_0\omega-(u\pm\epsilon^j\xi_0\omega)_{\pm}+
\epsilon^{j+1}\xi_0\omega}\Big]^p\,
\zeta^p\,dx.
\end{multline*}
Combining the last two inequalities, from \eqref{eq7.7} we arrive at the required \eqref{eq7.6}, this completes the proof of the lemma.
\end{proof}

\subsection{Main Proposition}
Set
\begin{equation*}
\begin{cases}
A_j(t):=B_r(x_0)\cap\big\{\mp u(\cdot, t)\leqslant \epsilon^j\,\xi_0 \omega\big\},\\
Y_j(t):=\frac{1}{|B_r(x_0)|}\int\limits_{A_j(t)}\,\zeta^p\,dx,\quad y_j:=\sup\limits_{t\in(t_0-\theta, t_0)}\,Y_j(t).
\end{cases}
\end{equation*}

Proposition \ref{pr7.1} is a simple consequence of the following result
\begin{propo}\label{pr7.2}
For any $\tilde{\nu}\in (0, 1)$ there exist $\epsilon \in (0, 1)$ and  $j_*\geqslant 1$ such that
\begin{equation}\label{eq7.8}
y_{j_*}\leqslant \tilde{\nu}.
\end{equation}
\end{propo}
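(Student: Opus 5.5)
I plan to integrate the differential inequality \eqref{eq7.6} of Lemma \ref{lem7.1} in time over short windows and play the size of the logarithm on $A_{j+1}(t)$ against the smallness of the time-derivative term. Write $k_j:=\epsilon^j\xi_0\omega$ and
$$\Psi_j(t):=\int\limits_{B_r(x_0)}\int\limits^{(u\pm k_j)_{\pm}}_0\frac{|k_j-z|^{q-1}\,dz}{[k_j-z+\epsilon k_j]^{p-1}}\,\zeta^p\,dx .$$
On $A_{j+1}(t)$ we have $(u\pm k_j)_\pm\geqslant k_j-k_{j+1}$. Hence the argument of the logarithm in \eqref{eq7.6} is at least $\frac{1+\epsilon}{2\epsilon}$, and the logarithm is at least $\log\frac{1}{2\epsilon}$. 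So \eqref{eq7.6} gives
$$\Psi_j'(t)+\frac{1}{\gamma(\alpha)r^p}\Big(\log\frac{1}{2\epsilon}\Big)^p|B_r|\,Y_{j+1}(t)\leqslant \gamma(b)r^{N-p}.$$
I will integrate this over $(t_1,t_2)\subset(t_0-\theta,t_0)$ with $t_2-t_1=\lambda\theta$. The boundary contributions are controlled by the computation already done in the proof of Lemma \ref{lem7.1}: $0\leqslant\Psi_j\leqslant\gamma\,k_j^{q-p+1}|B_r|\leqslant\gamma\,\omega^{q-p+1}|B_r|$. After dividing by $\theta/r^p=b\,\omega^{q-p+1}$, this yields the time average
$$\frac{1}{\lambda\theta}\int_{t_1}^{t_2}Y_{j+1}\,dt\leqslant \frac{\gamma(\alpha,b)}{(\log\frac1{2\epsilon})^p}\Big(1+\frac{\epsilon^{j(q-p+1)}}{\lambda}\Big).$$

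This only controls $Y_{j+1}$ on average in time, so the next step is to upgrade it to a pointwise-in-time bound, which is what $y_{j_*}$ requires. For this I will use the monotone structure in $j$ together with an energy estimate of the truncations from Lemma \ref{lem2.3}. I apply it to $(u\pm k)_\pm$, i.e.\ the super-solution truncation given by Lemma \ref{lem2.8}, with time cutoff $\zeta_2$, in the form used in the proof of Lemma \ref{lem2.5}. This estimate bounds $\sup_t$ of $\int \tilde g_\pm\zeta^p$ at level $k_{j+1}$ by its value at an earlier good time $t_1$ (chosen where $Y_{j+1}(t_1)$ is small by the averaged bound) plus an error $\gamma\,k_{j+1}^{p}\,r^{N-p}(t-t_1)$. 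Since $\tilde g$ at level $k_{j+1}$ is comparable to $k_{j+1}^{q-1}(u\pm k_{j+1})_\pm^2$ and $\geqslant c\,k_{j+2}^{q+1}$ on $A_{j+2}$, I get
$$Y_{j+2}(t)\leqslant \gamma(\epsilon)\Big[Y_{j+1}(t_1)+\frac{\lambda\,\theta\,k_{j+1}^{p-q-1}}{r^{p}}\Big].$$
The last term equals $\gamma(\epsilon)\,\lambda\,b\,\epsilon^{-(j+1)(q-p+1)}\xi_0^{p-q-1}$.

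The main obstacle is reconciling the two time scales, since $k_j^{q-p+1}$ decays in $j$ while the intrinsic scale $\theta$ is fixed. I would first fix $\epsilon$ so small that $\gamma(\alpha,b)(\log\frac{1}{2\epsilon})^{-p}\leqslant\tilde\nu^2$. Then I would exploit that the averaged bound allows $\lambda\approx\epsilon^{j(q-p+1)}$, a window length exactly adapted to the intrinsic time $k_j^{q-p+1}r^p$ at level $k_j$. With this choice the error term in the pointwise estimate becomes $\gamma\,b\,\epsilon^{-(q-p+1)}\lambda'$, where $\lambda'$ is a further fixed small fraction of the window. Choosing $\lambda'$ small depending on $\epsilon$ makes this term smaller than $\tilde\nu/2$. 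Covering $(t_0-\frac{3}{4}\theta,t_0)$ by such windows, each starting at a good time, gives $Y_{j+2}(t)\leqslant\tilde\nu$ for all $t$. Times before $t_0-\frac34\theta$ are trivial because $\zeta_2=0$ there. So $j_*=2$ (or a bounded $j$) suffices, with $\epsilon$ depending only on the data, $\alpha$, $b$, $\tilde\nu$.

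Finally, $\epsilon_*$ is fixed through the constraint $\epsilon_*\leqslant\frac12\epsilon^{j_*+1}\xi_0$ required in Lemma \ref{lem7.1}. If the dependence of $\gamma(\epsilon)$ on $\epsilon$ in the energy step spoils the smallness, my fallback is to replace the energy step by summing \eqref{eq7.6} over $j=0,\dots,j_*$ and using that the $Y_j$ are nonincreasing in $j$. This is the standard DiBenedetto argument, which gives $y_{j_*}\leqslant\gamma j_*^{-(p-1)/p}$ for large $j_*$.
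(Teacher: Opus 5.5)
There is a genuine gap in the upgrade from the time-averaged bound to a pointwise-in-time bound; your averaged estimate itself is correct. On each window of length $\epsilon^{j(q-p+1)}\theta$ you only know that the mean of $Y_{j+1}$ is at most $\gamma(\alpha,b)(\log\frac{1}{2\epsilon})^{-p}\leqslant\tilde\nu^2$. So the bad set $\{Y_{j+1}>\tilde\nu\}$ can still be an interval of length $\tilde\nu\,\epsilon^{j(q-p+1)}\theta$. Your energy estimate at level $k_{j+1}$, however, propagates smallness only over a time $\lambda'\epsilon^{j(q-p+1)}\theta$ with $\lambda'\lesssim\tilde\nu\,\epsilon^{q-p+1}/b$, i.e.\ over the intrinsic time $\sim\tilde\nu\,k_{j+1}^{q-p+1}r^p$ of the deeper level. ``Covering by windows each starting at a good time'' therefore needs a good time in every interval of that length, and nothing guarantees this. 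If you force it by applying the averaged bound on windows of length $\lambda'\epsilon^{j(q-p+1)}\theta$, you get $\gamma(\alpha,b)(\log\frac{1}{2\epsilon})^{-p}(1+1/\lambda')\gtrsim b\,\big(\tilde\nu\,\epsilon^{q-p+1}(\log\frac{1}{2\epsilon})^{p}\big)^{-1}$, which blows up as $\epsilon\to0$. A logarithmic gain can never beat the polynomial factor $\epsilon^{-(q-p+1)}$, which is exactly the ratio of the intrinsic time scales of consecutive levels. Nor is there any reason for the boundary term $\Psi_j(t_1)$ to be much smaller than $k_j^{q-p+1}|B_r|$: for $q>p-1$ the weight $\int_0^1 z^{q-1}(z+\epsilon)^{1-p}\,dz$ has no logarithmic growth. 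So the only smallness your scheme produces is $(\log\frac1{2\epsilon})^{-p}$, and the conclusion $j_*=2$ is a symptom of this.

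The fallback does not help either. Summing \eqref{eq7.6} over $j$ has the same averaging defect, and De Giorgi's isoperimetric shrinking gives smallness of a space-time measure, not of $y_{j_*}=\sup_t Y_{j_*}(t)$. The $\sup_t$ form, with $\tilde\nu$ independent of $j_*$, is what is needed later to apply Lemma \ref{lem2.5} in sub-cylinders of intrinsic length $(\epsilon^{j_*}\xi_0\omega)^{q-p+1}r^p$. A bound over all of $Q_{r,\theta}$ would instead have to be of size $\epsilon^{j_*(q-p+1)}$, which $\gamma j_*^{-(p-1)/p}$ is not.

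The missing idea is to avoid integrating \eqref{eq7.6} in time at all. The paper takes $t_\delta$ with $y_{j+1}\leqslant Y_{j+1}(t_\delta)+\delta$ and, using the regularity assumption \eqref{eq7.4}, splits according to the sign of $I'(t_\delta)$, where $I=\Psi_j$.

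If $I'(t_\delta)\geqslant0$, then \eqref{eq7.6} at the single time $t_\delta$ gives $y_{j+1}\leqslant\delta+\gamma(\alpha,b)(\log\frac{1+\epsilon}{2\epsilon})^{-p}\leqslant\tilde\nu$.

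If $I'(t_\delta)<0$, let $t_*$ be the last earlier time with $I'\geqslant0$; it exists since $\zeta_2=0$ early on. Then $I(t_\delta)\leqslant I(t_*)$, and at $t_*$ the logarithmic term controls the ($\zeta^p$-weighted) measure of $\{\pm(u\pm k_j)\geqslant z k_j\}$ for $z\geqslant z_*$. Using Lemma \ref{lem2.2}, the paper compares
$I(t_\delta)\geqslant(1-\epsilon^{q-p+1})k_j^{q-p+1}|B_r|(y_{j+1}-\delta)\int_0^1\frac{z^{q-1}dz}{(z+\epsilon)^{p-1}}$
with
$I(t_*)\leqslant[1-\frac14(1-z_*)^q]\,k_j^{q-p+1}|B_r|\,y_j\int_0^1\frac{z^{q-1}dz}{(z+\epsilon)^{p-1}}$.
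This gives either $y_{j+1}\leqslant\tilde\nu$ or $y_{j+1}\leqslant(1-\frac12\epsilon^{q-p+1})y_j$.

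Both sides carry the same factor $k_j^{q-p+1}$, so no comparison of time scales ever occurs. The price is $j_*\approx\epsilon^{-(q-p+1)}\log\frac1{\tilde\nu}$ iterations instead of a bounded number. This is harmless because $\epsilon_*$ is fixed afterwards through $\epsilon^{j_*+1}$.
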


Fix $j\geqslant 1$ and assume that $y_j\geqslant \tilde{\nu}$ and for any $\delta\in (0, 1)$ we find $t_\delta \in (t_0-\theta, t_0)$ such that
$$y_{j+1}\leqslant Y_{j+1}(t_\delta)+\delta.$$
The proof of Proposition \ref{pr7.2} distinguishes two different cases:
\begin{equation}\label{eq7.9}
\frac{\partial}{\partial t}\int\limits_{B_r(x_0)\times\{t_\delta\}}\int\limits^{(u\pm\epsilon^j\xi_0\omega)_{\pm}}_0\frac{|\epsilon^j\xi_0\omega-z|^{q-1} dz}{[\epsilon^j\xi_0\omega-z+\epsilon^{j+1}\xi_0\omega]^{p-1}}\,\zeta^p\,dx \geqslant 0,
\end{equation}
or
\begin{equation}\label{eq7.10}
\frac{\partial}{\partial t}\int\limits_{B_r(x_0)\times\{t_\delta\}}\int\limits^{(u\pm\epsilon^j\xi_0\omega)_{\pm}}_0\frac{|\epsilon^j\xi_0\omega-z|^{q-1} dz}{[\epsilon^j\xi_0\omega-z+\epsilon^{j+1}\xi_0\omega]^{p-1}}\,\zeta^p\,dx < 0.
\end{equation}

\subsubsection{Proof of Proposition \ref{pr7.2} Under Condition \eqref{eq7.9}}
Assume first that inequality \eqref{eq7.9} holds, then by our choices Lemma \ref{lem7.1} yields
\begin{multline*}
\frac{1}{\gamma(\alpha)}(y_{j+1}-\delta)\Big[\log\frac{1+\epsilon}{2\epsilon}\Big]^p\leqslant
\frac{1}{\gamma(\alpha)}\,Y_{j+1}(t_\delta)\,\Big[\log\frac{1+\epsilon}{2\epsilon}\Big]^p\leqslant\\\leqslant
\frac{1}{\gamma(\alpha)}\,\fint\limits_{B_r(x_0)\times\{t_\delta\}}
\Big[\log\frac{\epsilon^j(1+\epsilon)\xi_0\omega}{\epsilon^j\xi_0\omega-(u\pm\epsilon^j\xi_0\omega)_{\pm}+
\epsilon^{j+1}\xi_0\omega}\Big]^p\,
\zeta^p\,dx\leqslant \gamma(b),
\end{multline*}
and hence
\begin{equation}\label{eq7.11}
y_{j+1}\leqslant \delta+\frac{\gamma(\alpha, b)}{[\log\frac{1+\epsilon}{2\epsilon}]^p}\leqslant \tilde{\nu},
\end{equation}
provided that
$$\delta\leqslant \frac{1}{2}\,\tilde{\nu}\quad \text{and}\quad \epsilon \leqslant \frac{1}{2}\,e^{-\big(\frac{2\gamma(\alpha, b)}{\tilde{\nu}}\big)^{\frac{1}{p}}}.$$

\subsubsection{Proof of Proposition \ref{pr7.2} Under Condition \eqref{eq7.10}}
Introduce a time level
$$t_*:=\sup\Big\{t\in (t_0-\theta, t_\delta): \frac{\partial}{\partial t}\int\limits_{B_r(x_0)\times\{t\}}\int\limits^{(u\pm\epsilon^j\xi_0\omega)_{\pm}}_0\frac{|\epsilon^j\xi_0\omega-z|^{q-1} dz}{[\epsilon^j\xi_0\omega-z+\epsilon^{j+1}\xi_0\omega]^{p-1}}\,
\zeta^p\,dx \geqslant 0 \Big\},$$
by the fact that $\zeta=0$ for $t\leqslant t_0-\frac{3}{4}\theta$ such a set is non-empty and $t_*$ is well-defined.  By our choices
\begin{equation}\label{eq7.12}
I(t_\delta):=\int\limits_{B_r(x_0)\times\{t_\delta\}}\int\limits^{(u\pm\epsilon^j\xi_0\omega)_{\pm}}_0\frac{|\epsilon^j\xi_0\omega-z|^{q-1} dz}{[\epsilon^j\xi_0\omega-z+\epsilon^{j+1}\xi_0\omega]^{p-1}}\,
\,\zeta^p\,dx\leqslant I(t_*).
\end{equation}
Using inequality \eqref{eq2.3} from Lemma \ref{lem2.2} we obtain
\begin{multline}\label{eq7.13}
I(t_\delta)\geqslant \int\limits_{A_{j+1}(t_\delta)}\int\limits^{\epsilon^j\xi_0\omega(1-\epsilon)}_0\frac{|\epsilon^j\xi_0\omega-z|^{q-1} dz}{[\epsilon^j\xi_0\omega-z+\epsilon^{j+1}\xi_0\omega]^{p-1}}\,
\,\zeta^p\,dx=\\=(\epsilon^j\xi_0\omega)^{q-p+1}\int\limits_{A_{j+1}(t_\delta)}
\,\zeta^p\,dx\int\limits^{1-\epsilon}_0\frac{(1-z)^{q-1}\,dz}{[1-z+\epsilon]^{p-1}}=
(\epsilon^j\xi_0\omega)^{q-p+1}\int\limits_{A_{j+1}(t_\delta)}
\,\zeta^p\,dx\int\limits^{1}_\epsilon \frac{z^{q-1}\,dz}{[z+\epsilon]^{p-1}}=\\=
(\epsilon^j\xi_0\omega)^{q-p+1}\int\limits_{A_{j+1}(t_\delta)}
\,\zeta^p\,dx\bigg[\int\limits^{1}_0 \frac{z^{q-1}\,dz}{[z+\epsilon]^{p-1}}-\int\limits^{\epsilon}_0 \frac{z^{q-1}\,dz}{[z+\epsilon]^{p-1}}\bigg]\geqslant\\\geqslant(\epsilon^j\xi_0\omega)^{q-p+1}(1-\epsilon^{q-p+1})|B_r(x_0)|Y_{j+1}(t_\delta)\,
\int\limits^{1}_0 \frac{z^{q-1}\,dz}{[z+\epsilon]^{p-1}}\geqslant\\\geqslant(\epsilon^j\xi_0\omega)^{q-p+1}(1-\epsilon^{q-p+1})|B_r(x_0)|\Big[y_{j+1}-\delta\Big]\,
\int\limits^{1}_0 \frac{z^{q-1}\,dz}{[z+\epsilon]^{p-1}}.
\end{multline}
Let us estimate the integral on the right-hand side of \eqref{eq7.12}. By \eqref{eq7.3} and Fubini's theorem
\begin{multline}\label{eq7.14}
I(t_*)\leqslant \int\limits_{A_j(t_*)}\,\zeta^p\int\limits^{(\epsilon^j\xi_0+\epsilon_*)\omega}_0
\frac{|\epsilon^j\xi_0\omega-z|^{q-1}}{[\epsilon^j\xi_0\omega-z+\epsilon^{j+1}\xi_0\omega]^{p-1}}\,dz\,dx=\\
= \int\limits^{\epsilon^j\xi_0\omega}_0
\frac{(\epsilon^j\xi_0\omega-z)^{q-1}}{[\epsilon^j\xi_0\omega-z+\epsilon^{j+1}\xi_0\omega]^{p-1}}
\bigg(\int\limits_{B_r(x_0)\times\{t_*\}}\mathbf{I}_{\{\pm(u\pm\epsilon^j\xi_0\omega)\geqslant z\}}\,\zeta^p\,dx\bigg)\,dz+\\+
\int\limits_{A_j(t_*)}\zeta^p\,dx\int\limits^{(\epsilon^j\xi_0+\epsilon_*)\omega}_{\epsilon^j\xi_0\omega}
\frac{(z-\epsilon^j\xi_0\omega)^{q-1}\,dz}{[\epsilon^j\xi_0\omega-z+\epsilon^{j+1}\xi_0\omega]^{p-1}}\leqslant\\\leqslant
(\epsilon^j\xi_0 \omega)^{q-p+1}\int\limits^1_0\frac{(1-z)^{q-1}}
{[1-z+\epsilon]^{p-1}}\bigg(\int\limits_{B_r(x_0)\times\{t_*\}}\mathbf{I}_{\{\pm(u\pm\epsilon^j\xi_0\omega)\geqslant z\,\epsilon^j\xi_0\omega \}}\,\zeta^p
\,dx\bigg)\,dz+\\+
(\epsilon^j\xi_0 \omega)^{q-p+1}|B_r(x_0)|\,y_j\,\int\limits^{1+\frac{\epsilon_*}{\epsilon^j\xi_0}}_1
\frac{(z-1)^{q-1}\,dz}{[1-z+\epsilon]^{p-1}}.
\end{multline}
The integrals on the right-hand side of \eqref{eq7.14} we estimate as follows. By \eqref{eq2.4} of Lemma \ref{lem2.2}
\begin{multline*}
\int\limits^{1+\frac{\epsilon_*}{\epsilon^j\xi_0}}_1
\frac{(z-1)^{q-1}\,dz}{[1-z+\epsilon]^{p-1}}=\int\limits^{\frac{\epsilon_*}{\epsilon^j\xi_0}}_0\frac{z^{q-1}\,dz}{[\epsilon-z]^{p-1}}\leqslant
\Big(\frac{\epsilon_*}{\epsilon^j\xi_0}\Big)^q\Big(\frac{1+\epsilon}{\epsilon-\epsilon_*/(\epsilon^j\xi_0)}\Big)^{p-1}
\int\limits^{1}_0\frac{z^{q-1}\,dz}{[z+\epsilon]^{p-1}}\leqslant\\\leqslant \Big(\frac{4}{\epsilon}\Big)^{p-1}\Big(\frac{\epsilon_*}{\epsilon^j\xi_0}\Big)^q
\int\limits^{1}_0\frac{z^{q-1}\,dz}{[z+\epsilon]^{p-1}}\leqslant \Big(\frac{\epsilon_*}{\epsilon^{j+1}\xi_0}\Big)^q
\int\limits^{1}_0\frac{z^{q-1}\,dz}{[z+\epsilon]^{p-1}},
\end{multline*}
provided that 
$$4^{p-1}\,\epsilon^{q-p+1}\leqslant 1 \quad \text{and}\quad \epsilon_*\leqslant \frac{1}{2}\xi_0\epsilon^{j_*+1}.$$
To estimate the first term on the right-hand side of \eqref{eq7.14} we note that
by Lemma \ref{lem7.1} and our choice of $t_*$
\begin{equation*}
\int\limits_{B_r(x_0)\times\{t_*\}}\mathbf{I}_{\{\pm(u\pm\epsilon^j\xi_0\omega)\geqslant z\,\epsilon^j\xi_0\omega \}}\,\zeta^p\,dx\leqslant\frac{\gamma(\alpha, b)}{[\log\frac{1+\epsilon}{1-z+\epsilon}]^p}|B_r(x_0)|,
\end{equation*}
for all $0<z<1$. Particularly, if $z_*=(1+\epsilon)(1-e^{-(\frac{2\gamma(\alpha, b)}{\tilde{\nu}})^{\frac{1}{p}}})\leqslant z <1$, then
\begin{equation*}
\int\limits_{B_r(x_0)\times\{t_*\}}\mathbf{I}_{\{\pm(u\pm\epsilon^j\xi_0\omega)\geqslant z\,\epsilon^j\xi_0\omega \}}\,\zeta^p\,dx\leqslant \frac{1}{2}\,\tilde{\nu}\,|B_r(x_0)|.
\end{equation*}
Hence,  by \eqref{eq2.2} and using the fact that $y_j\geqslant \tilde{\nu}$, from \eqref{eq7.14} we obtain
\begin{multline}\label{eq7.15}
I(t_*)\leqslant(\epsilon^j\xi_0\omega)^{q-p+1}|B_r(x_0)|\,\bigg[y_j\,\int\limits^{z_*}_0\frac{(1-z)^{q-1}\,dz}{[1-z+\epsilon]^{p-1}}+
\frac{1}{2}\tilde{\nu}
\int\limits^{1}_{z_*}\frac{(1-z)^{q-1}\,dz}{[1-z+\epsilon]^{p-1}}+\\+
y_j\,\Big(\frac{\epsilon_*}{\epsilon^{j+1}\xi_0}\Big)^q
\int\limits^{1}_0\frac{z^{q-1}\,dz}{[z+\epsilon]^{p-1}}\bigg]\leqslant\\\leqslant (\epsilon^j\xi_0\omega)^{q-p+1}|B_r(x_0)|\,y_j\bigg[\int\limits^{z_*}_0\frac{(1-z)^{q-1}\,dz}{[1-z+\epsilon]^{p-1}}+\frac{1}{2}
\int\limits^{1}_{z_*}\frac{(1-z)^{q-1}\,dz}{[1-z+\epsilon]^{p-1}}+\\+
\Big(\frac{\epsilon_*}{\epsilon^{j+1}\xi_0}\Big)^q
\int\limits^{1}_0\frac{z^{q-1}\,dz}{[z+\epsilon]^{p-1}}\bigg]=\\=(\epsilon^j\xi_0\omega)^{q-p+1}|B_r(x_0)|\,y_j\bigg[
\int\limits^{1}_{1-z_*}\frac{z^{q-1}\,dz}{[z+\epsilon]^{p-1}}+\frac{1}{2}\int\limits^{1-z_*}_{0}\frac{z^{q-1}\,dz}{[z+\epsilon]^{p-1}}+
\Big(\frac{\epsilon_*}{\epsilon^{j+1}\xi_0}\Big)^q
\int\limits^{1}_0\frac{z^{q-1}\,dz}{[z+\epsilon]^{p-1}}\bigg]=\\=(\epsilon^j\xi_0\omega)^{q-p+1}|B_r(x_0)|\,y_j\bigg[
\int\limits^{1}_{0}\frac{z^{q-1}\,dz}{[z+\epsilon]^{p-1}}-\frac{1}{2}\int\limits^{1-z_*}_{0}\frac{z^{q-1}\,dz}{[z+\epsilon]^{p-1}}+
\Big(\frac{\epsilon_*}{\epsilon^{j+1}\xi_0}\Big)^q
\int\limits^{1}_0\frac{z^{q-1}\,dz}{[z+\epsilon]^{p-1}}\bigg]\leqslant \\\leqslant (\epsilon^j\xi_0\omega)^{q-p+1}|B_r(x_0)|\,y_j
\int\limits^{1}_0\frac{z^{q-1}\,dz}{[z+\epsilon]^{p-1}}\bigg[1-\frac{1}{2}(1-z_*)^q+
\Big(\frac{\epsilon_*}{\epsilon^{j+1}\xi_0}\Big)^q\bigg]\leqslant \\\leqslant \Big[1-\frac{1}{4}(1-z_*)^q\Big]\,(\epsilon^j\xi_0\omega)^{q-p+1}|B_r(x_0)|\,y_j
\int\limits^{1}_0\frac{z^{q-1}\,dz}{[z+\epsilon]^{p-1}},
\end{multline}
provided that $\big(\frac{\epsilon_*}{\epsilon^{j+1}\xi_0}\big)^q\leqslant \frac{1}{4}(1-z_*)^q$,
that is $\epsilon_*\leqslant \frac{1}{4^{1+\frac{1}{q}}}\epsilon^{j_*+1}\xi_0 e^{-(\frac{2\gamma(\alpha, b)}{\tilde{\nu}})^{\frac{1}{p}}}\leqslant \frac{1}{4^{\frac{1}{q}}}\epsilon^{j_*+1}\xi_0 (1-z_*)$.

Collecting \eqref{eq7.11}-\eqref{eq7.15} we obtain that either
\begin{equation}\label{eq7.16}
y_{j+1}\leqslant \tilde{\nu},
\end{equation}
or
\begin{equation}\label{eq7.17}
y_{j+1}\leqslant \frac{1-\frac{1}{4}(1-z_*)^q}{1-\epsilon^{q-p+1}}\,y_j+\delta\leqslant (1-\epsilon^{q-p+1}+\frac{\delta}{\tilde{\nu}})\,y_j
\leqslant (1-\frac{1}{2}\epsilon^{q-p+1})\,y_j,
\end{equation}
provided that  $\delta\leqslant \frac{\tilde{\nu}}{2}\epsilon^{q-p+1}$ and $\frac{1-\frac{1}{4}(1-z_*)^q}{1-\epsilon^{q-p+1}} \leqslant (1-\epsilon^{q-p+1})$, i. e. $\epsilon+\epsilon^{\frac{q-p+1}{q}}\leqslant \frac{1}{8^{\frac{1}{q}}}\,e^{-(\frac{2\gamma(\alpha, b)}{\nu})^{\frac{1}{p}}}.$

\subsubsection{Proof of Proposition \ref{pr7.2} Concluded}

Iterating  inequality \eqref{eq7.17} if needed, we choose $j_*$ so large that $(1-\frac{1}{2}\,\epsilon^{q-p+1})^{j_*}\leqslant \tilde{\nu},$ from which the required inequality \eqref{eq7.8}
follows, this completes the proof of the proposition.

\subsubsection{Proof of Proposition \ref{pr7.1} and Theorem \ref{th1.4} Concluded}

Inequality \eqref{eq7.8} yields
\begin{equation*}
 |B_{\frac{3}{4}r}(x_0)\cap\big\{\mp u(\cdot, t)\leqslant \epsilon^{j_*}\xi_0\omega\big\}|\leqslant  \Big(\frac{4}{3}\Big)^N\,\tilde{\nu}\,|B_{\frac{3}{4}r}(x_0)|:=\nu |B_{\frac{3}{4} r}(x_0)|,
\end{equation*}
for all $t\in (t_0-\frac{1}{2}\,b\,\omega^{q-p+1}\,r^p, t_0)$,  this proves Proposition \ref{pr7.1}.

Choosing $\epsilon_*$ smaller if necessary, by De Giorgi type lemma, Lemma \ref{lem2.5} we obtain
$$\mp u(x, t)\geqslant \frac{1}{2}\epsilon^{j_*}\,\xi_0\,\omega,$$
for all $(x, t)\in B_{\frac{1}{2} r}(x_0)\times (t_0-\frac{1}{4}\,b\,\omega^{q-p+1}\,r^p, t_0).$ This completes the proof of Theorem \ref{th1.4}.

\vskip3.5mm
{\bf Acknowledgements.} This work is partially supported  the grant "Mathematical modelling of complex systems and processes related to security" of National Academy of Sciences of Ukraine under the budget programme "Support for the development of priority areas of scientific research" for 2025-2026, p/n 0125U000299, and by a grant from the Simons Foundation (SFI-PD-Ukraine-00017674, I. Skrypnik).

\bigskip

CONTACT INFORMATION

\medskip
\textbf{Igor I.~Skrypnik}\\Institute of Applied Mathematics and Mechanics,
National Academy of Sciences of Ukraine, \\ \indent Batiouk Str. 19, 84116 Sloviansk, Ukraine,\quad ihor.skrypnik@gmail.com

\end{document}